\newtheorem{theorem}{Theorem}[section]
\newtheorem{corollary}[theorem]{Corollary}
\newtheorem{proposition}[theorem]{Proposition}
\theoremstyle{definition}
\newtheorem{definition}[theorem]{Definition}
\theoremstyle{remark}
\newtheorem{example}[theorem]{Example}
\newtheorem{remark}[theorem]{Remark}
\numberwithin{equation}{section}
\begin{document}

\title{Analytic approximation of transmutation operators and applications to highly accurate solution of spectral problems}
\author{Vladislav V. Kravchenko and Sergii M. Torba \\
{\small Departamento de Matem\'{a}ticas, CINVESTAV del IPN, Unidad Quer\'{e}%
taro, }\\
{\small Libramiento Norponiente No. 2000, Fracc. Real de Juriquilla, Quer%
\'{e}taro, Qro. C.P. 76230 MEXICO}\\
{\small  e-mail: vkravchenko@math.cinvestav.edu.mx, storba@math.cinvestav.edu.mx
\thanks{Research was supported by CONACYT, Mexico via the project 166141.}}}
\maketitle

\begin{abstract}
A method for approximate solution of spectral problems for Sturm-Liouville
equations based on the construction of the Delsarte transmutation operators
is presented. In fact the problem of numerical approximation of solutions
and eigenvalues is reduced to approximation of a primitive of the potential
by a finite linear combination of generalized wave polynomials\ introduced
in \cite{KKTT}, \cite{KT Transmut}. The method allows one to compute both
lower and higher eigendata with an extreme accuracy.
\end{abstract}

\section{Introduction}

Solution of Sturm-Liouville equations and of a wide range of related direct
and inverse spectral problems is at the core of modern mathematical physics
and its numerous applications. Since the work of J. Fourier on the theory of
the heat and his method of separation of variables the properties and
methods for solving different kinds of Sturm-Liouville spectral problems
were studied in thousands of publications. One of the important mathematical
tools for approaching problems related to Sturm-Liouville equations was
introduced in 1938 by J. Delsarte \cite{Delsarte} and called \cite%
{DelsarteLions1956} the transmutation operator. It relates two linear
differential operators and allows one to transform a more complicated
equation into a simpler one. Nowadays the transmutation operator is widely
used in the theory of linear differential equations (see, e.g., \cite%
{BegehrGilbert}, \cite{Carroll}, \cite{HrynivMykytyuk2003}, \cite{LevitanInverse}, \cite{Marchenko},
\cite{Sitnik}, \cite{Trimeche}). Very often in literature the transmutation
operators are called the transformation operators. It is well known that
under certain regularity conditions the transmutation operator transmuting
the operator $A=-\frac{d^{2}}{dx^{2}}+q(x)$ into $B=-\frac{d^{2}}{dx^{2}}$
is a Volterra integral operator with good properties. Its integral kernel
can be obtained as a solution of a certain Goursat problem for the
Klein-Gordon equation with a variable coefficient. In spite of their
attractive properties and importance there exist very few examples of the
transmutation kernels available in a closed form (see \cite{KrT2012}).

In the recent work \cite{KT Transmut} it was observed that the kernel of the
transmutation operator relating the Schr\"{o}dinger operator $A$ with $B$ is a complex
component of a bicomplex-valued pseudoanalytic function of a hyperbolic
variable \cite{APFT}, a solution of a hyperbolic Vekua equation of a special
form. The other component of that pseudoanalytic function is the
transmutation kernel corresponding to a Darboux associated Schr\"{o}dinger
operator \cite{KrT2012}. This observation combined with some new results
concerning such hyperbolic pseudoanalytic functions allowed us to obtain
\cite{KT Transmut} a new and extremely convenient representation for the
transmutation kernel in terms of so-called generalized wave polynomials \cite%
{KKTT}.

In the present work we develop this result into a practical method for
solving a wide spectrum of initial value, boundary value and spectral
problems for the one-dimensional Schr\"{o}dinger equation $Au=\lambda u$.
The mentioned above convenience of the representation of the kernel consists
in the fact that with respect to the variable of integration the kernel
results to be a polynomial. Since to obtain solutions of the Schr\"{o}dinger
equation the transmutation operator is applied to the functions $\sin \sqrt{%
\lambda }t$ and $\cos \sqrt{\lambda }t$, solutions of the simplest such
equation $Bv=\lambda v$, all the involved integrals are calculated
explicitly. The coefficients of the polynomial are functions of
another independent variable $x$. Due to the developed theory they can be found as a result of approximation of the pair of functions $g_{1}(x)=\frac{h}{2}+\frac{1}{4}\int_{0}^{x}q(s)ds$ and $g_{2}(x)=\frac{1}{4}\int_{0}^{x}q(s)ds$ in terms of a specially constructed family of functions. Here $h$ is a constant defined below. The family of functions is the family of traces of the generalized wave polynomials on the lines $x=t$ and $x=-t$ in the plane $(x,t)$.

Thus, the method developed and presented in this paper for solving spectral
problems for the Schr\"{o}dinger operator consists in the following steps.

\begin{enumerate}
\item Construction of a particular solution for the equation $Au=0$.

\item Construction of certain recursive integrals which serve for
calculating the family of traces of the generalized wave polynomials.

\item Approximation of the pair of functions $g_{1}$ and $g_{2}$ by a linear
combination of the traces.

\item Computation of the solution of the Schr\"{o}dinger equation or of a
characteristic function of the spectral problem in a domain of interest as a
function of the spectral parameter $\lambda $.

\item Localization of the eigenvalues as zeros of the characteristic
function.
\end{enumerate}

Several advantages of the method should be emphasized.

\begin{itemize}
\item The error of the computed eigendata does not increase for higher
eigenvalues. One can compute, e.g., the $1000$th eigenvalue and
eigenfunction with roughly the same accuracy as the first ones.

\item A quite simple a-priori control of the accuracy of the computed eigendata is available. As we show, the accuracy of the computed
eigendata is entirely linked to the accuracy of approximation of the
functions $g_{1}$ and $g_{2}$ on step 3. This error of approximation is
easily calculated. If it is inadmissible, more approximating functions
should be taken.

\item The method works equally well in the case of complex-valued
coefficients and spectral parameter dependent boundary conditions.

\item The method allows one to obtain highly accurate eigendata. For
example, for a standard test problem (the Paine problem \cite{Pryce}) we
present eigendata corresponding to $\lambda _{1},\ldots,
\lambda _{10000}$ computed with the accuracy of the order $10^{-105}$. For
the Coffey-Evans problem well known for being extremely difficult for
numerical computation due to neatly clustered eigenvalues the achieved
accuracy is of the order $10^{-65}$. Note that even for $\lambda =0$ the
solution of the Coffey-Evans equation delivered by the built-in
Mathematica's function NDSolve was computed at best with an absolute error
greater than $61$.
\end{itemize}

The main results presented in this paper are aimed to give a rigorous
justification of the developed method. We prove that the central object of
the transmutation kernel representation, the set of traces of the
generalized wave polynomials is complete in the required functional spaces
and therefore the set can be used for approximating the functions $g_{1}$
and $g_{2}$. We obtain corresponding estimates for the accuracy of
approximation of the transmutation kernel. Further estimates concerning the
accuracy of the resulting approximation of solutions of the Schr\"{o}dinger
equation and the independence of the accuracy of largeness of $\lambda $ are
proved as well. We show that the same approximation coefficients obtained on
step 3 can be used to obtain a related approximation for the transmutation
kernel for the Darboux associated Schr\"{o}dinger operator. This is
important for considering problems involving derivatives in boundary
conditions.

The numerical experiments were performed in Mathematica with
multiple-precision arithmetic. The main reason for not restricting our
computations to the machine precision consists in a fuller exploration of
the method, its capabilities and features. For example, the results of
involved computations obtained with a high precision allow us to study more
completely the link between the accuracy of the approximation (step 3) and
of the resulting computed eigendata. This study reveals that the absolute
error of the approximation essentially coincides with the final error in the
eigendata and decays exponentially with respect to the number $N$ of traces used, which for practical purposes means that an accurate approximation
of the functions $g_{1}$ and $g_{2}$ by the system of traces of generalized
polynomials guarantees the computation of arbitrarily high eigendata with
the same accuracy. However the complete description of the class of potentials for which the approximation and eigendata errors decay exponentially and rigorous proofs of the observed relation between approximation errors and eigendata errors remain for the future research.

Due to the difficulties with construction of the kernels of transmutation operators there were very few attempts for their practical use in numerical solution of spectral problems. In this relation we mention the paper \cite{Boumenir2006} where certain analytic approximation formulas for the transmutation kernels were obtained. To our knowledge the present paper is a first publication offering an efficient and highly accurate (and to our opinion clearly promising) numerical algorithm based on the transmutation operators for solving spectral problems for the Schr\"{o}dinger operator. One of the direct applications requiring a large number of eigendata computed with a considerable and non-decreasing  accuracy arises from the Fourier method of separation of variables. According to the method the solution admits an analytic expression in the form of a series which in practice is known to be slowly convergent. The method presented here allows one to calculate partial sums of such series containing large numbers of terms in a fast and accurate manner.

In the next Section \ref{Section2} we recall some definitions and properties concerning the
transmutation operators. In Section \ref{Section3} we introduce the system of generalized
wave polynomials. In Section \ref{Section4} we prove the completeness of their traces in
appropriate functional spaces. In Section \ref{SectApprox} we construct the approximate
kernels of the transmutation operators and obtain corresponding estimates for
their accuracy. In Section \ref{Section6} we obtain the main result of the paper, the
formulas for approximate solutions of the Schr\"{o}dinger equation $Au=\lambda
u$ as well as for their derivatives and prove corresponding estimates for
their accuracy. Section \ref{Section7} is dedicated to the description of the algorithm and its numerical
implementation, the proof of the uniform error bounds for the approximate zeros of characteristic functions of Sturm-Liouville spectral problems as well as to the presentation of numerical results.

\section{Transmutation operators}\label{Section2}

We give a definition of a transmutation operator from \cite{KT Obzor} which
is a modification of the definition proposed by Levitan \cite{LevitanInverse}%
, adapted to the purposes of the present work. Let $E$ be a linear
topological space and $E_{1}$ its linear subspace (not necessarily closed).
Let $A$ and $B$ be linear operators: $E_{1}\rightarrow E$.

\begin{definition}
\label{DefTransmut} A linear invertible operator $T$ defined on the whole $E$
such that $E_{1}$ is invariant under the action of $T$ is called a
transmutation operator for the pair of operators $A$ and $B$ if it fulfills
the following two conditions.

\begin{enumerate}
\item Both the operator $T$ and its inverse $T^{-1}$ are continuous in $E$;

\item The following operator equality is valid
\begin{equation}
AT=TB  \label{ATTB}
\end{equation}
or which is the same
\begin{equation*}
A=TBT^{-1}.
\end{equation*}
\end{enumerate}
\end{definition}

Our main interest concerns the situation when $A=-\frac{d^{2}}{dx^{2}}+q(x)$%
, $B=-\frac{d^{2}}{dx^{2}}$, \ and $q$ is a continuous complex-valued
function. Hence for our purposes it will be sufficient to consider the
functional space $E=C[a,b]$ with the topology of uniform convergence and its
subspace $E_{1}$ consisting of functions from $C^{2}\left[ a,b\right] $. One
of the possibilities to introduce a transmutation operator on $E$ was
considered by Lions \cite{Lions57} and later on in other references (see,
e.g., \cite{Marchenko}), and consists in constructing a Volterra integral
operator corresponding to a midpoint of the segment of interest. As we begin
with this transmutation operator it is convenient to consider a symmetric
segment $[-b,b]$ and hence the functional space $E=C[-b,b]$. It is worth
mentioning that other well known ways to construct the transmutation
operators (see, e.g., \cite{LevitanInverse}, \cite{Trimeche}) imply imposing
initial conditions on the functions and consequently lead to transmutation
operators satisfying (\ref{ATTB}) only on subclasses of $E_{1}$. We introduce such transmutation operators below.

Thus, consider the space $E=C[-b,b]$. In \cite{CKT} and \cite{KrT2012} a
parametrized family of transmutation operators for the defined above $A$ and
$B$ was studied. Operators of this family can be realized in the form of the
Volterra integral operator
\begin{equation}
\mathbf{T}_{h} u(x)=u(x)+\int_{-x}^{x}\mathbf{K}(x,t;h)u(t)dt  \label{Tmain}
\end{equation}
where $\mathbf{K}(x,t;h)=\mathbf{H}\big(\frac{x+t}{2},\frac{x-t}{2};h\big)$,
$h$ is a complex parameter, $|t|\le|x|\le b$ and $\mathbf{H}$ is the unique
solution of the Goursat problem
\begin{equation}
\frac{\partial^{2}\mathbf{H}(u,v;h)}{\partial u\,\partial v}=q(u+v)\mathbf{H}%
(u,v;h),  \label{GoursatTh1}
\end{equation}%
\begin{equation}
\mathbf{H}(u,0;h)=\frac{h}{2}+\frac{1}{2}\int_{0}^{u}q(s)\,ds,\qquad \mathbf{%
H}(0,v;h)=\frac{h}{2}.  \label{GoursatTh2}
\end{equation}
If the potential $q$ is continuously differentiable, the kernel $\mathbf{K}$
itself is a solution of the Goursat problem
\begin{equation}  \label{GoursatKh1}
\left( \frac{\partial^{2}}{\partial x^{2}}-q(x)\right) \mathbf{K}(x,t;h)=%
\frac{\partial^{2}}{\partial t^{2}}\mathbf{K}(x,t;h),
\end{equation}
\begin{equation}  \label{GoursatKh2}
\mathbf{K}(x,x;h)=\frac{h}{2}+\frac{1}{2}\int_{0}^{x}q(s)\,ds,\qquad\mathbf{K%
}(x,-x;h)=\frac{h}{2}.
\end{equation}
If the potential $q$ is $n$ times continuously differentiable, the kernel $%
\mathbf{K}(x,t;h)$ is $n+1$ times continuously differentiable with respect
to both independent variables.

\begin{remark}
In the case $h=0$ the operator $\mathbf{T}_{h}$ coincides with the
transmutation operator studied in \cite[Chap. 1, Sect. 2]{Marchenko}. In
\cite{LevitanInverse}, \cite{Lions57}, \cite{Trimeche} it was established
that in the case $q\in C^{1}[-b,b]$ the Volterra-type integral operator %
\eqref{Tmain} is a transmutation in the sense of Definition \ref{DefTransmut}
on the space $C^{2}[-b,b]$ if and only if the integral kernel $\mathbf{K}%
(x,t)$ satisfies the Goursat problem \eqref{GoursatKh1}, \eqref{GoursatKh2}.
\end{remark}

The following proposition shows that to be able to construct
transmutation operators $\mathbf{T}_{h}$ or their integral kernels $\mathbf{K%
}_{h}$ for arbitrary values of the parameter $h$ it is sufficient to know the
transmutation operator $\mathbf{T}_{h_{1}}$ or its integral kernel $\mathbf{K%
}_{h_{1}}$ for some particular parameter $h_{1}$.

\begin{proposition}[\protect\cite{CKT}, \protect\cite{KT Obzor}]
\label{PropChangeOfH} The operators $\mathbf{T}_{h_{1}}$ and $\mathbf{T}%
_{h_{2}}$ are related by the equality
\begin{equation*}
\mathbf{T}_{h_{2}}u=\mathbf{T}_{h_{1}}\bigg[u(x)+\frac{h_{2}-h_{1}}{2}%
\int_{-x}^{x}u(t)\,dt\bigg]
\end{equation*}%
valid for any $u\in C[-b,b]$.

The corresponding integral kernels $\mathbf{K}(x,t;h_{1})$ and $\mathbf{K}%
(x,t;h_{2})$ are related as follows
\begin{equation*}
\mathbf{K}(x,t;h_{2})=\frac{h_{2}-h_{1}}{2}+\mathbf{K}(x,t;h_{1})+\frac{%
h_{2}-h_{1}}{2}\int_{t}^{x}\big(\mathbf{K}(x,s;h_{1})-\mathbf{K}(x,-s;h_{1})%
\big)\,ds.
\end{equation*}
\end{proposition}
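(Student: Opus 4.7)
The plan is to establish the operator identity by direct substitution and identify the resulting Volterra kernel with $\mathbf{K}(x,t;h_2)$ via uniqueness of the Goursat problem; the kernel identity then falls out of the same computation. Concretely, setting $v(x):=u(x)+\tfrac{h_2-h_1}{2}\int_{-x}^x u(t)\,dt$, I would expand $\mathbf{T}_{h_1}v$ using (\ref{Tmain}) and rewrite it as $u(x)+\int_{-x}^x M(x,t)\,u(t)\,dt$ with $M$ computed explicitly.

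The only nontrivial step in computing $M$ is the double integral $\int_{-x}^x \mathbf{K}(x,t;h_1)\int_{-t}^t u(s)\,ds\,dt$. Splitting the outer integral into $t\ge 0$ and $t\le 0$ (where $\int_{-t}^t u(s)\,ds$ equals $+\int_{-|t|}^{|t|}u(s)\,ds$ and $-\int_{-|t|}^{|t|}u(s)\,ds$ respectively), applying Fubini on each piece, and substituting $t\mapsto-t$ in the second one, I would obtain
\[
\int_{-x}^x u(s)\int_{|s|}^x\!\bigl[\mathbf{K}(x,t;h_1)-\mathbf{K}(x,-t;h_1)\bigr]\,dt\,ds.
\]
Since the integrand is odd in $t$ and the piece $\int_s^{|s|}$ is over an interval symmetric about $0$, that piece vanishes, so $|s|$ may be replaced by $s$; the resulting $M(x,t)$ is exactly the right-hand side of the proposed kernel formula.

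Next I would check that $M$ solves the Goursat problem (\ref{GoursatKh1})--(\ref{GoursatKh2}) with parameter $h_2$, so that by uniqueness $M\equiv \mathbf{K}(\cdot,\cdot;h_2)$ and both claims follow. The boundary conditions are immediate: at $t=x$ the integral vanishes and $M(x,x)$ matches $\tfrac{h_2}{2}+\tfrac12\int_0^x q(s)\,ds$, while at $t=-x$ the oddness argument again kills the integral, giving $h_2/2$. For the Klein--Gordon equation I would apply $\partial_x^2-q(x)-\partial_t^2$ to the three summands of $M$, use that $\mathbf{K}(\cdot,\cdot;h_1)$ satisfies the same equation, and verify that the boundary contributions arising when differentiating the limits of $\int_t^x$ combine, via the Goursat boundary conditions for $\mathbf{K}(\cdot,\cdot;h_1)$, to exactly $q(x)$, thereby cancelling the $-q(x)\cdot\tfrac{h_2-h_1}{2}$ produced by the constant summand. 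This PDE verification is the main technical obstacle and rigorously requires $q\in C^1$ so that $\mathbf{K}$ is $C^2$; for merely continuous $q$ one should rephrase everything in terms of the kernel $\mathbf{H}$ and the Goursat problem (\ref{GoursatTh1})--(\ref{GoursatTh2}), or approximate $q$ uniformly by smooth potentials and take limits in the Volterra integral equation for $\mathbf{K}$.
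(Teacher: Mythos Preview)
Your computation of the kernel $M$ is correct: the Fubini step and the oddness argument replacing $|s|$ by $s$ are fine, and the resulting $M$ coincides with the stated formula for $\mathbf{K}(x,t;h_2)$. The boundary checks at $t=\pm x$ are correct, and the PDE verification also goes through: when you apply $\partial_x^2-q(x)-\partial_t^2$ to the integral term, the boundary contributions combine via the identities $\mathbf{K}_1(x,x;h_1)+\mathbf{K}_2(x,x;h_1)=\tfrac12 q(x)$ and $\mathbf{K}_1(x,-x;h_1)-\mathbf{K}_2(x,-x;h_1)=0$ (obtained by differentiating the Goursat conditions (\ref{GoursatKh2})) to produce exactly $q(x)$, cancelling the $-q(x)\cdot\tfrac{h_2-h_1}{2}$ from the constant summand. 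So the argument is sound.

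Note, however, that the paper does not give a proof of this proposition at all: it is simply quoted from \cite{CKT} and \cite{KT Obzor}. There is therefore no ``paper's own proof'' to compare with here. Your route---compute the composed kernel explicitly and then invoke uniqueness of the Goursat problem---is a natural one and essentially self-contained. One small improvement: rather than verifying the Klein--Gordon equation (\ref{GoursatKh1}) for $M$, which as you observe needs $q\in C^1$, you can pass to characteristic coordinates and check directly that the corresponding $\mathbf{H}$-kernel, i.e.\ $M(u+v,u-v)$, satisfies (\ref{GoursatTh1})--(\ref{GoursatTh2}) with parameter $h_2$. That problem is well posed for merely continuous $q$, so the detour through $C^1$ approximation becomes unnecessary.
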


The following theorem states that the operators $\mathbf{T}_{h}$ are indeed
transmutations in the sense of Definition \ref{DefTransmut}.

\begin{theorem}[\cite{KT Transmut}]
\label{Th Transmutation} Let $q\in C[-b,b]$. Then the
operator $\mathbf{T}_{h}$ defined by \eqref{Tmain} satisfies the equality
\begin{equation*}
\left( -\frac{d^{2}}{dx^{2}}+q(x)\right) \mathbf{T}_{h}[u]=\mathbf{T}_{h}%
\left[ -\frac{d^{2}}{dx^{2}}(u)\right]  
\end{equation*}
for any $u\in C^{2}[-b,b]$.
\end{theorem}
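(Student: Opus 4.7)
The plan is to first prove the identity under the extra assumption $q\in C^{1}[-b,b]$ by a direct computation using the Goursat problem \eqref{GoursatKh1}--\eqref{GoursatKh2} for $\mathbf{K}$, and then to remove the smoothness assumption by a density/continuous-dependence argument.

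For Step~1, I would assume $q\in C^{1}$, so that $\mathbf{K}(x,t;h)$ is $C^{2}$ in both variables. I would set $I(x):=\int_{-x}^{x}\mathbf{K}(x,t;h)\,u(t)\,dt$ and compute $I''(x)$ via Leibniz' rule twice; this produces boundary contributions at $t=\pm x$ involving $\mathbf{K}$ and its first partials on the characteristics, together with the integral $\int_{-x}^{x}\mathbf{K}_{xx}(x,t;h)u(t)\,dt$. Separately I would integrate $\int_{-x}^{x}\mathbf{K}(x,t;h)u''(t)\,dt$ by parts twice in $t$, producing boundary terms of the same shape plus the integral $\int_{-x}^{x}\mathbf{K}_{tt}(x,t;h)u(t)\,dt$. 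To line the two expressions up, I would differentiate the characteristic conditions \eqref{GoursatKh2} along the lines $t=\pm x$, obtaining
\[
\mathbf{K}_{x}(x,x;h)+\mathbf{K}_{t}(x,x;h)=\tfrac{q(x)}{2},\qquad \mathbf{K}_{x}(x,-x;h)=\mathbf{K}_{t}(x,-x;h),
\]
which I expect will annihilate the boundary terms exactly while producing the diagonal factor $q(x)u(x)$; the wave-type equation \eqref{GoursatKh1} then collapses the remaining two integrals against each other.

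For Step~2, I would remove the $C^{1}$ hypothesis on $q$ by approximation. Choose $q_{n}\in C^{1}[-b,b]$ with $q_{n}\to q$ uniformly and denote the corresponding objects by $\mathbf{H}_{n},\mathbf{K}_{n},\mathbf{T}_{h,n}$. Since the Goursat problem \eqref{GoursatTh1}--\eqref{GoursatTh2} is equivalent to a Volterra integral equation for $\mathbf{H}$, a Gronwall-type estimate applied to $\mathbf{H}_{n}-\mathbf{H}$ will give $\mathbf{K}_{n}\to\mathbf{K}$ uniformly on $\{|t|\le|x|\le b\}$, hence $v_{n}:=\mathbf{T}_{h,n}[u]\to v:=\mathbf{T}_{h}[u]$ and $\mathbf{T}_{h,n}[-u'']\to \mathbf{T}_{h}[-u'']$ uniformly for any fixed $u\in C^{2}[-b,b]$. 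Step~1 applied to each $q_{n}$ yields
\[
v_{n}''(x)=q_{n}(x)v_{n}(x)-\mathbf{T}_{h,n}[-u''](x),
\]
whose right-hand side converges uniformly. A routine argument (integrate $v_{n}''$ twice, use convergence of $v_{n}$ at a point to pin down the initial data, and pass to the limit) then shows $v\in C^{2}[-b,b]$ with $v''$ equal to that limit, which is precisely the transmutation identity.

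The hard part is the low-regularity case: with $q$ only continuous, $\mathbf{K}$ is merely $C^{1}$ in each variable, so the direct differentiation of Step~1 is not justified, and even the membership $\mathbf{T}_{h}[u]\in C^{2}$ must be proved rather than assumed. The approximation in Step~2 evades differentiating $\mathbf{K}$ twice by passing to the limit in an identity already established for smoother data, while the Volterra form of the Goursat problem supplies the necessary continuous dependence $\mathbf{K}_{n}\to\mathbf{K}$.
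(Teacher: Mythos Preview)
The paper does not prove this theorem; it is quoted from \cite{KT Transmut}, and the subsequent Remark even points out that for $q\in C^{1}[-b,b]$ the result is classical (\cite{LevitanInverse}, \cite{Lions57}, \cite{Trimeche}). So there is no in-paper proof to compare against.

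That said, your plan is correct and is precisely the natural route. Your Step~1 computation lines up: differentiating the characteristic conditions \eqref{GoursatKh2} gives exactly the two relations you wrote, and a routine bookkeeping of the Leibniz and integration-by-parts boundary terms shows that the $u'(x)$, $u'(-x)$, $u(-x)$ contributions cancel pairwise while the $u(x)$ contributions combine to $q(x)u(x)$, and \eqref{GoursatKh1} kills the remaining integral. For Step~2, the paper itself (in the Remark after Theorem~\ref{Th Kapprox}) recalls that the Goursat problem \eqref{GoursatTh1}--\eqref{GoursatTh2} is equivalent to the Volterra equation $\mathbf{H}(u,v)=\int_0^u\int_0^v q(u'+v')\mathbf{H}(u',v')\,du'\,dv'+\text{(data)}$, from which the continuous dependence $\mathbf{K}_n\to\mathbf{K}$ you need follows by the standard successive-approximation bound. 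One small point worth making explicit in your limit argument: both $v_n(0)=u(0)$ and $v_n'(0)=u'(0)+hu(0)$ are independent of $n$ (the latter from $\mathbf{K}_n(0,0;h)=h/2$), so the double-integral representation $v_n(x)=v_n(0)+v_n'(0)x+\int_0^x(x-s)v_n''(s)\,ds$ passes to the limit cleanly and yields $v\in C^2$ with the desired second derivative.
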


\begin{remark}
\label{RemTh}$\mathbf{T}_{h}$ maps a solution $v$ of the equation $v^{\prime
\prime }+\omega ^{2}v=0$, where $\omega $ is a complex number, into a
solution $u$ of the equation
\begin{equation}
u^{\prime \prime }-q(x)u+\omega ^{2}u=0  \label{SLomega2}
\end{equation}%
with the following correspondence of the initial values $
u(0)=v(0)$, $u^{\prime }(0)=v^{\prime }(0)+hv(0)$.
\end{remark}

Following \cite{Marchenko} we introduce the notations%
\begin{equation*}
K_{c}(x,t;h)=\mathbf{K}(x,t;h)+\mathbf{K}(x,-t;h)
\end{equation*}
where $h$ is a complex number, and $K_{s}(x,t;\infty )=\mathbf{K}(x,t;h)-\mathbf{K}(x,-t;h)$.

\begin{theorem}[\cite{Marchenko}]\label{TcTsMapsSolutions} Solutions $c(\omega ,x;h)$ and $%
s(\omega ,x;\infty )$ of equation \eqref{SLomega2} satisfying the initial
conditions
\begin{gather}
c(\omega ,0;h)=1,\qquad c_{x}^{\prime }(\omega ,0;h)=h  \label{ICcos}\\
s(\omega ,0;\infty )=0,\qquad s_{x}^{\prime }(\omega ,0;\infty )=1
\label{ICsin}
\end{gather}%
can be represented in the form
\begin{equation}
c(\omega ,x;h)=\cos \omega x+\int_{0}^{x}K_{c}(x,t;h)\cos \omega t\,dt
\label{c cos}
\end{equation}%
and
\begin{equation}
s(\omega ,x;\infty )=\frac{\sin \omega x}{\omega }+\int_{0}^{x}K_{s}(x,t;%
\infty )\frac{\sin \omega t}{\omega }\,dt.  \label{s sin}
\end{equation}
\end{theorem}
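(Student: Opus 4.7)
The plan is to apply the transmutation operator $\mathbf{T}_h$ to the two canonical solutions of the reference equation $v''+\omega^2 v=0$, namely $\cos\omega t$ and $\sin\omega t/\omega$, and then simplify the resulting integrals over $[-x,x]$ to integrals over $[0,x]$ using the parity of the trigonometric functions.

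First I would set $u_1(x):=\mathbf{T}_h[\cos\omega t](x)$ and $u_2(x):=\mathbf{T}_h[\sin\omega t/\omega](x)$. Since $\cos\omega t,\sin\omega t/\omega\in C^2[-b,b]$ and both satisfy $v''+\omega^2 v=0$, Theorem \ref{Th Transmutation} gives
\begin{equation*}
\left(-\tfrac{d^2}{dx^2}+q(x)\right)u_j=\mathbf{T}_h\!\left[-\tfrac{d^2}{dt^2}v_j\right]=\mathbf{T}_h[\omega^2 v_j]=\omega^2 u_j,
\end{equation*}
so that $u_1,u_2$ both solve \eqref{SLomega2}. According to Remark \ref{RemTh} the initial data transform as $u(0)=v(0)$ and $u'(0)=v'(0)+hv(0)$. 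Plugging in $v=\cos\omega t$ yields $u_1(0)=1$, $u_1'(0)=h$, matching \eqref{ICcos}; plugging in $v=\sin\omega t/\omega$ yields $u_2(0)=0$, $u_2'(0)=1$, matching \eqref{ICsin}. By uniqueness of the Cauchy problem, $u_1\equiv c(\omega,x;h)$ and $u_2\equiv s(\omega,x;\infty)$.

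The second step is to rewrite the Volterra representation \eqref{Tmain} so that it becomes \eqref{c cos} and \eqref{s sin}. I would split $\int_{-x}^{x}\mathbf{K}(x,t;h)\cos\omega t\,dt$ as $\int_{-x}^{0}+\int_{0}^{x}$ and perform the substitution $t\mapsto -t$ in the first piece. Since $\cos(-\omega t)=\cos\omega t$, this collapses the integral to
\begin{equation*}
\int_{0}^{x}\bigl(\mathbf{K}(x,t;h)+\mathbf{K}(x,-t;h)\bigr)\cos\omega t\,dt=\int_{0}^{x}K_c(x,t;h)\cos\omega t\,dt,
\end{equation*}
which together with the leading term $\cos\omega x$ yields \eqref{c cos}. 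The same manipulation applied to $u_2$ works verbatim, except that $\sin(-\omega t)=-\sin\omega t$ produces the difference $\mathbf{K}(x,t;h)-\mathbf{K}(x,-t;h)=K_s(x,t;\infty)$, and delivers \eqref{s sin}.

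There is no serious obstacle: the argument is essentially bookkeeping built on top of Theorem \ref{Th Transmutation} and Remark \ref{RemTh}. The only mild subtlety is ensuring that the correspondence of initial values in Remark \ref{RemTh} is invoked correctly, in particular that for the sine-type solution the factor $1/\omega$ is carried through both the initial-value computation and the integral reduction so that the normalization $s_x'(\omega,0;\infty)=1$ is recovered rather than $\omega$.
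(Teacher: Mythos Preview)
Your proposal is correct. The paper itself does not supply a proof of this statement; it is quoted as a result from Marchenko. Your derivation, built on Theorem~\ref{Th Transmutation} and Remark~\ref{RemTh} together with the parity reduction of the integral over $[-x,x]$, is exactly the natural way to recover the result from the machinery already assembled in the paper, and nothing is missing.
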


Denote by
\begin{equation*}
T_{c}u(x)=u(x)+\int_{0}^{x}K_{c}(x,t;h)u(t)dt  
\end{equation*}
and%
\begin{equation*}
T_{s}u(x)=u(x)+\int_{0}^{x}K_{s}(x,t;\infty )u(t)dt  
\end{equation*}
the corresponding integral operators.

\section{Recursive integrals and generalized wave polynomials}\label{Section3}

Let $f\in C[a,b]$ be a complex valued function and $f(x)\neq 0$ for any $%
x\in \lbrack a,b]$. The interval $(a,b)$ is assumed being finite. Let us
consider two sequences of recursive integrals%
\begin{equation}
X^{(0)}(x)\equiv 1,\qquad X^{(n)}(x)=n\int_{x_{0}}^{x}X^{(n-1)}(s)\left(
f^{2}(s)\right) ^{(-1)^{n}}\,\mathrm{d}s, \qquad x_{0}\in \lbrack a,b],\quad n=1,2,\ldots  \label{Xn}
\end{equation}%
and
\begin{equation}
\widetilde{X}^{(0)}\equiv 1,\qquad \widetilde{X}^{(n)}(x)=n\int_{x_{0}}^{x}%
\widetilde{X}^{(n-1)}(s)\left( f^{2}(s)\right) ^{(-1)^{n-1}}\,\mathrm{d}s, \qquad
x_{0}\in \lbrack a,b],\quad n=1,2,\ldots .  \label{Xtiln}
\end{equation}

Define two families of functions $\left\{ \varphi _{k}\right\}
_{k=0}^{\infty }$ and $\left\{ \psi _{k}\right\} _{k=0}^{\infty }$
constructed according to the rules
\begin{equation}
\varphi _{k}(x)=%
\begin{cases}
f(x)X^{(k)}(x), & k\text{\ odd}, \\
f(x)\widetilde{X}^{(k)}(x), & k\text{\ even},%
\end{cases}
\label{phik}
\end{equation}%
and
\begin{equation}
\psi_{k}(x)=%
\begin{cases}
\dfrac{\widetilde{X}^{(k)}(x)}{f(x)}, & k\text{\ odd,} \\
\dfrac{X^{(k)}(x)}{f(x)}, & k\text{\ even}.%
\end{cases}
\label{psik}
\end{equation}

The following result obtained in \cite{KrCV08} (for additional details and
simpler proof see \cite{APFT} and \cite{KrPorter2010}) establishes the
relation of the system of functions $\left\{ \varphi _{k}\right\}
_{k=0}^{\infty }$ and $\left\{ \psi _{k}\right\} _{k=0}^{\infty }$ to the
Sturm-Liouville equation.

\begin{theorem}
\label{ThGenSolSturmLiouville} Let $q$ be a continuous complex valued
function of an independent real variable $x\in \lbrack a,b]$ and $\lambda $
be an arbitrary complex number. Suppose there exists a solution $f$ of the
equation
\begin{equation}
f^{\prime \prime }-qf=0  \label{SLhom}
\end{equation}%
on $(a,b)$ such that $f\in C^{2}(a,b)\cap C^{1}[a,b]$ and $f(x)\neq 0$\ for
any $x\in \lbrack a,b]$. Then the general solution $y\in C^{2}(a,b)\cap
C^{1}[a,b]$ of the equation
\begin{equation}
y^{\prime \prime }-qy=\lambda y  \label{SLlambda}
\end{equation}%
on $(a,b)$ has the form $y=c_{1}y_{1}+c_{2}y_{2}$
where $c_{1}$ and $c_{2}$ are arbitrary complex constants,
\begin{equation}
y_{1}=\sum_{k=0}^{\infty }\frac{\lambda ^{k}}{(2k)!}\varphi _{2k}\qquad
\text{and}\qquad y_{2}=\sum_{k=0}^{\infty }\frac{\lambda ^{k}}{(2k+1)!}%
\varphi _{2k+1}  \label{u1u2}
\end{equation}%
and both series converge uniformly on $[a,b]$ together with the series of
the first derivatives which have the form%
\begin{multline}
y_{1}^{\prime }=f^{\prime }+\sum_{k=1}^{\infty }\frac{\lambda ^{k}}{(2k)!}%
\left( \frac{f^{\prime }}{f}\varphi _{2k}+2k\,\psi _{2k-1}\right) \qquad
\text{and}  \label{du1du2} \\
y_{2}^{\prime }=\sum_{k=0}^{\infty }\frac{\lambda ^{k}}{(2k+1)!}\left( \frac{%
f^{\prime }}{f}\varphi _{2k+1}+\left( 2k+1\right) \psi _{2k}\right) .
\end{multline}%
The series of the second derivatives converge uniformly on any segment $%
[a_{1},b_{1}]\subset (a,b)$.
\end{theorem}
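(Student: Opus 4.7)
The plan is to show (i) formal differential identities that make each series satisfy \eqref{SLlambda} term by term, (ii) uniform bounds on $\varphi_k$, $\psi_k$ that legitimize termwise differentiation and give the convergence claims, and (iii) a Wronskian computation at $x_0$ showing $\{y_1,y_2\}$ is a fundamental system. For step (i), I would derive from \eqref{Xn}--\eqref{psik}, splitting according to the parity of $k$ so that the $(-1)^n$ exponents cooperate, the unified identities
\[
\varphi_k' = \frac{f'}{f}\,\varphi_k + k\,\psi_{k-1}, \qquad \psi_k' = k\,\varphi_{k-1} - \frac{f'}{f}\,\psi_k \qquad (k\ge 1),
\]
together with $\varphi_0 = f$ and $\psi_0 = 1/f$. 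Differentiating the first identity once more and using $f''=qf$ collapses the $f'/f$ cross-terms and yields
\[
\varphi_k'' - q\,\varphi_k = k(k-1)\,\varphi_{k-2} \qquad (k\ge 2),
\]
while $\varphi_0=f$ satisfies \eqref{SLhom}. A formal termwise reindexing then converts $\sum \lambda^k/(2k)!\,\varphi_{2k}''$ into $\lambda y_1$, and analogously for $y_2$; the expressions \eqref{du1du2} for $y_j'$ are read off directly from the first identity above.

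For step (ii), set $M = \max_{[a,b]}\{|f|, 1/|f|\}$, finite since $f\in C[a,b]$ and $f$ is non-vanishing. Induction on $n$ using \eqref{Xn} and the trivial estimate $\int_{x_0}^{x}(s-x_0)^{n-1}ds = (x-x_0)^n/n$ gives
\[
|X^{(n)}(x)|,\ |\widetilde X^{(n)}(x)| \le M^{2n}(x-x_0)^n,
\]
the prefactor $n$ in the recursion being exactly absorbed by the $1/n$ from integration. Consequently $|\varphi_k|,|\psi_k|\le M^{2k+1}(b-a)^k$ on $[a,b]$, so the series \eqref{u1u2} are dominated by convergent $\cosh/\sinh$-type majorants in $M^2(b-a)\sqrt{|\lambda|}$, giving uniform convergence of $y_1,y_2$ on $[a,b]$. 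Since $f\in C^1[a,b]$ and $f\ne 0$, the ratio $f'/f$ is bounded on $[a,b]$, hence \eqref{du1du2} admits an analogous majorant and converges uniformly on $[a,b]$, so $y_j\in C^1[a,b]$ and termwise differentiation is justified. On any $[a_1,b_1]\subset(a,b)$ the potential $q$ is bounded, and applying $\varphi_k''=q\varphi_k+k(k-1)\varphi_{k-2}$ term by term yields a majorant of the same type for the series of second derivatives, giving $y_j\in C^2(a,b)$.

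With termwise differentiation in hand, the reindexing of step (i) gives $y_j''-qy_j=\lambda y_j$. Evaluating at $x_0$ and using $X^{(n)}(x_0)=\widetilde X^{(n)}(x_0)=0$ for $n\ge 1$ yields $y_1(x_0)=f(x_0)$, $y_1'(x_0)=f'(x_0)$, $y_2(x_0)=0$, $y_2'(x_0)=1/f(x_0)$, so the Wronskian $W(y_1,y_2)(x_0)=1\ne 0$; since \eqref{SLlambda} has a two-dimensional solution space, $\{y_1,y_2\}$ is a fundamental system and every solution takes the stated form. The step I expect to be most delicate is (i): the four parity branches of \eqref{phik}--\eqref{psik}, combined with the alternating exponents $(-1)^n$ in \eqref{Xn}--\eqref{Xtiln}, must be checked case by case so that the chain rule collapses in each case to the same unified formulas for $\varphi_k'$ and $\psi_k'$; any sign slip there would destroy the telescoping that produces $\varphi_k''-q\varphi_k=k(k-1)\varphi_{k-2}$. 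Once the identities are secured, the remainder is routine majorant analysis and initial-condition bookkeeping.
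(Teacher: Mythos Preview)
Your proposal is correct. The paper does not supply its own proof of this theorem; it is quoted from \cite{KrCV08} with pointers to simpler proofs in \cite{APFT} and \cite{KrPorter2010}. Your argument---the recursion identities $\varphi_k'=\tfrac{f'}{f}\varphi_k+k\psi_{k-1}$, $\psi_k'=k\varphi_{k-1}-\tfrac{f'}{f}\psi_k$, the second-order collapse $\varphi_k''-q\varphi_k=k(k-1)\varphi_{k-2}$, the majorant $|X^{(n)}|,|\widetilde X^{(n)}|\le M^{2n}|x-x_0|^n$, and the Wronskian check at $x_0$---is exactly the direct route taken in those references, so there is no substantive difference to report.

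One small correction in your justification of the restriction to $[a_1,b_1]\subset(a,b)$ for the second-derivative series: it is not that $q$ is bounded only there (by hypothesis $q\in C[a,b]$, so $q$ is bounded on the whole interval), but that the identity $\varphi_k''=q\varphi_k+k(k-1)\varphi_{k-2}$ relies on $f''=qf$, and the theorem only assumes $f\in C^2(a,b)$, so $f''$---and hence $\varphi_k''$---need not exist at the endpoints. Your bound then works verbatim on any closed subinterval of $(a,b)$. This is a matter of phrasing, not a gap.
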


Representations \eqref{u1u2} and \eqref{du1du2}, also known as the SPPS
method (Spectral Parameter Power Series), present an efficient and highly
competitive technique for solving a variety of spectral and scattering
problems related to Sturm-Liouville equations. The first work implementing
Theorem \ref{ThGenSolSturmLiouville} for numerical solution was \cite%
{KrPorter2010} and later on the SPPS method was used in a number of
publications (see \cite{CKOR}, \cite{ErbeMertPeterson2012}, \cite{KKB}, \cite%
{KKRosu}, \cite{KiraRosu2010}, \cite{KT Obzor} and references therein).

\begin{remark}
\label{RemInitialValues}It is easy to see that by definition the solutions $%
y_{1}$ and $y_{2}$ from (\ref{u1u2}) satisfy the following initial
conditions
\begin{align*}
y_{1}(x_{0})& =f(x_{0}), & y_{1}^{\prime }(x_{0})& =f^{\prime }(x_{0}), \\
y_{2}(x_{0})& =0, & y_{2}^{\prime }(x_{0})& =1/f(x_{0}).
\end{align*}
\end{remark}

\begin{remark}
\label{RemarkNonVanish} It is worth mentioning that in the regular case the
existence and construction of the required $f$ presents no difficulty.
Indeed, let $q$ be real valued and continuous on $[a,b]$. Then (\ref{SLhom})
possesses two linearly independent real-valued solutions $f_{1}$ and $f_{2} $ whose zeros alternate. Thus, one may choose $f=f_{1}+if_{2}$.
Moreover, for the construction of $f_{1}$ and $f_{2}$ in fact the same SPPS
method may be used \cite{KrPorter2010}. In the case of complex-valued coefficients the existence of a non-vanishing solution was shown in \cite[Remark 5]{KrPorter2010}.
\end{remark}

In what follows we choose $x_{0}=0$.

\begin{theorem}[\protect\cite{CKT}, \protect\cite{KrT2012}]
\label{Th Transmute}Let $q$ be a continuous complex valued function of an
independent real variable $x\in \lbrack -b,b]$ for which there exists a
particular solution $f$ of \eqref{SLhom} such that $f\in C^{2}[-b,b]$, $%
f\neq 0$ on $[-b,b]$ and normalized as $f(0)=1$. Denote $h:=f^{\prime
}(0)\in \mathbb{C}$. Suppose $\mathbf{T}_{h}$ is the operator defined by %
\eqref{Tmain} and $\varphi _{k}$, $k\in \mathbb{N}_{0}$ are functions
defined by \eqref{phik}. Then
\begin{equation}
\mathbf{T}_{h}x^{k}=\varphi _{k}(x)\qquad \text{for any}\ k\in \mathbb{N}%
_{0}.  \label{mapping powers 1}
\end{equation}
\end{theorem}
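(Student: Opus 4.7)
My plan is to prove the identity by induction on $k$, showing that both $u_k := \mathbf{T}_h x^k$ and $\varphi_k$ solve the same initial value problem, and then invoke uniqueness for second-order linear ODEs.

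\textbf{Setting up the common IVP.} First I set up a recursive IVP characterization. For $k\ge 2$, Theorem \ref{Th Transmutation} gives
\begin{equation*}
-u_k''+qu_k \;=\; \mathbf{T}_h\Bigl[-\tfrac{d^2}{dx^2}(x^k)\Bigr] \;=\; -k(k-1)\,\mathbf{T}_h x^{k-2} \;=\; -k(k-1)\,u_{k-2},
\end{equation*}
so $u_k$ satisfies the inhomogeneous Sturm--Liouville equation $u_k''-qu_k=k(k-1)u_{k-2}$. For $k=0,1$ Remark~\ref{RemTh} applied to the solutions $v\equiv 1$ and $v(x)=x$ of $v''=0$ shows that $u_0,u_1$ satisfy the homogeneous equation $u''-qu=0$. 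Initial values of $u_k$ are extracted directly from $u_k(x)=x^k+\int_{-x}^x\mathbf{K}(x,t;h)t^k\,dt$: evaluating at $x=0$ gives $u_k(0)=\delta_{k0}$, and Leibniz differentiation combined with $\mathbf{K}(0,0;h)=h/2$ gives $u_0'(0)=h$, $u_1'(0)=1$, and $u_k'(0)=0$ for $k\ge 2$.

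\textbf{Matching $\varphi_k$ to the same IVP.} I now check that $\varphi_k$ solves the same IVP. Since $X^{(k)}(0)=\widetilde{X}^{(k)}(0)=0$ for $k\ge 1$, definition (\ref{phik}) immediately gives $\varphi_k(0)=0$ for $k\ge 1$ and $\varphi_0(0)=f(0)=1$. Differentiating $\varphi_k$ and using $(X^{(k)})'=kX^{(k-1)}f^{\,2(-1)^k}$, $(\widetilde{X}^{(k)})'=k\widetilde{X}^{(k-1)}f^{\,2(-1)^{k-1}}$, one obtains in both parities the uniform formula
\begin{equation*}
\varphi_k'(x) \;=\; \tfrac{f'(x)}{f(x)}\varphi_k(x) + k\,\psi_{k-1}(x),
\end{equation*}
so $\varphi_0'(0)=f'(0)=h$, $\varphi_1'(0)=\psi_0(0)=1$, and $\varphi_k'(0)=0$ for $k\ge 2$. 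Thus the boundary data match.

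\textbf{Verifying the ODE for $\varphi_k$.} Using $f''=qf$ and the identities above, a further differentiation yields, separately in the two parity cases,
\begin{equation*}
\varphi_k''(x) \;=\; q(x)\,\varphi_k(x) + k(k-1)\,f(x)\!\cdot\!\begin{cases}X^{(k-2)}(x), & k\text{ odd},\\ \widetilde{X}^{(k-2)}(x), & k\text{ even},\end{cases}
\end{equation*}
and since $k$ and $k-2$ have the same parity, the bracketed factor is precisely $\varphi_{k-2}/f$. Hence $\varphi_k''-q\varphi_k=k(k-1)\varphi_{k-2}$ for $k\ge 2$, while for $k=0,1$ the computation collapses (using $X^{(1)\prime}=1/f^2$) to $\varphi_k''-q\varphi_k=0$.

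\textbf{Conclusion by induction.} With the base cases $u_0=\varphi_0=f$ and $u_1=\varphi_1$ established by uniqueness for the homogeneous IVP, assume $u_j=\varphi_j$ for $j<k$. Then $u_k$ and $\varphi_k$ satisfy the same inhomogeneous equation $y''-qy=k(k-1)\varphi_{k-2}$ with the same initial values $y(0)=y'(0)=0$, so $u_k=\varphi_k$ by uniqueness. The main work is the parity bookkeeping in the last step; once the uniform derivative formula for $\varphi_k'$ is in hand this is mechanical, so no serious obstacle arises.
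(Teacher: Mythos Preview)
Your proof is correct. The paper does not include its own proof of this theorem---it is quoted from \cite{CKT} and \cite{KrT2012}---so there is no in-paper argument to compare against. Your approach (showing by induction that $\mathbf{T}_h x^k$ and $\varphi_k$ satisfy the same second-order linear IVP, using Theorem~\ref{Th Transmutation} for the recursive ODE $u_k''-qu_k=k(k-1)u_{k-2}$ and reading off initial values from the Volterra form \eqref{Tmain} and the Goursat data \eqref{GoursatKh2}) is exactly the standard route and matches the derivation in the cited sources; the derivative identity $\varphi_k'=(f'/f)\varphi_k+k\psi_{k-1}$ you use is precisely \eqref{du1du2}. The only point worth noting is that Leibniz differentiation of $\int_{-x}^{x}\mathbf{K}(x,t;h)t^k\,dt$ at $x=0$ requires $\mathbf{K}$ to be $C^1$, which the paper guarantees for continuous $q$ (the remark following \eqref{GoursatKh2}); alternatively the initial data for $k=0,1$ follow directly from Remark~\ref{RemTh}, and for $k\ge 2$ one can read off $u_k'(0)=0$ from the same Leibniz computation or from the general correspondence $u(0)=v(0)$, $u'(0)=v'(0)+hv(0)$ valid for any $v\in C^1$.
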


\begin{remark}
The mapping property \eqref{mapping powers 1} of the transmutation operator
allows one to see that the SPPS representations \eqref{u1u2} from Theorem %
\ref{ThGenSolSturmLiouville} are nothing but the images of Taylor expansions
of the functions $\cosh \sqrt{\lambda }x$ and $\frac{1}{\sqrt{\lambda }}%
\sinh \sqrt{\lambda }x$ under the action of $\mathbf{T}_{h}$. Moreover,
equality \eqref{mapping powers 1} is behind a new method for solving
Sturm-Liouville problems proposed in \cite{KT MMET 2012} and based on the
use of Tchebyshev polynomials for approximating trigonometric functions,
combined with \eqref{mapping powers 1}.
\end{remark}

In what follows we assume that $f\in C^{2}[-b,b]$, $f\neq 0$ on $[-b,b]$, $%
f(0)=1$ and denote $h:=f^{\prime }(0)\in \mathbb{C}$. Any such function is
associated with an operator $\mathbf{T}_{h}$. For convenience, from now on
we will write $T_{f}$ instead of $\mathbf{T}_{h}$ and the integral kernel of
$T_{f}$ will be denoted by $\mathbf{K}_{f}$. The kernel $K_{c}(x,t;h)$ will
be denoted by $C_{f}(x,t)$ and the kernel $K_{s}(x,t;\infty )$ by $%
S_{f}(x,t) $.

Notice that
\begin{equation}
C_{f}(x,t)=\mathbf{K}_{f}(x,t)+\mathbf{K}_{f}(x,-t)  \label{Cf}
\end{equation}%
and
\begin{equation}
S_{f}(x,t)=\mathbf{K}_{f}(x,t)-\mathbf{K}_{f}(x,-t).  \label{Sf}
\end{equation}

The following functions introduced in \cite{KKTT}%
\begin{equation}
u_{0}=\varphi _{0}(x),\quad u_{2m-1}(x,t)=\sum_{\text{even }k=0}^{m}\binom{m%
}{k}\varphi _{m-k}(x)t^{k},\quad u_{2m}(x,t)=\sum_{\text{odd }k=1}^{m}\binom{%
m}{k}\varphi _{m-k}(x)t^{k},  \label{um}
\end{equation}%
are called generalized wave polynomials (the wave polynomials are introduced
below, in Example \ref{Ex Wave polynomials}). The following parity relations
hold for the generalized wave polynomials.
\begin{equation}
u_{0}(x,-t)=u_{0}(x,t),\qquad u_{2n-1}(x,-t)=u_{2n-1}(x,t),\qquad
u_{2n}(x,-t)=-u_{2n}(x,t).  \label{umParity}
\end{equation}

For the values of the generalized wave polynomials on the characteristics $%
x=t$ and $x=-t$ we introduce the additional notations%
\begin{equation}
\mathbf{c}_{m}(x)=u_{2m-1}(x,x)=\sum_{\text{even }k=0}^{m}\binom{m}{k}%
x^{k}\varphi _{m-k}(x),\quad m=1,2,\ldots \text{ and }\mathbf{c}%
_{0}(x)=u_{0}(x,x)=f(x),  \label{cm}
\end{equation}%
\begin{equation}
\mathbf{s}_{m}(x)=u_{2m}(x,x)=\sum_{\text{odd }k=1}^{m}\binom{m}{k}%
x^{k}\varphi _{m-k}(x),\quad m=1,2,\ldots .  \label{sm}
\end{equation}%
As we show in Sections \ref{SectApprox} and \ref{Section6} the systems of functions $\left\{
\mathbf{c}_{m}\right\} _{m=0}^{\infty }$, $\left\{ \mathbf{s}_{m}\right\}
_{m=1}^{\infty }$ and $\left\{ u_{m}\right\} _{m=0}^{\infty }$ play a
crucial role in the construction of the transmutation kernels and hence of
the solutions of equation (\ref{SLlambda}).

\begin{example}
\label{Ex Wave polynomials}In a special case when $f\equiv 1$ we obtain that
$\varphi _{k}(x)=x^{k}$, $k\in \mathbb{N}_{0}$ and $u_{k}(x,t)=p_{k}(x,t)$
where $p_{k}$ are wave polynomials \cite[Proposition 1]{KKTT} defined by the
equalities
\begin{equation*}
p_{0}(x,t)=1,\quad p_{2m-1}(x,t)=\mathcal{R}\bigl((x+\mathbf{j}t)^{m}\bigr)%
,\quad p_{2m}(x,t)=\mathcal{I}\bigl((x+\mathbf{j}t)^{m}\bigr),\ m\geq 1.
\end{equation*}%
Here $\mathbf{j}$ is a hyperbolic imaginary unit (see, e.g., \cite%
{Lavrentyev and Shabat}, \cite{MotterRosa}, \cite{Sobczyk} and \cite{APFT}):
$\mathbf{j}^{2}=1$ and $\mathbf{j}\neq \pm 1$, $\mathcal{R}$ and $\mathcal{I}
$ are the real and the imaginary parts respectively of a corresponding
hyperbolic number. The wave polynomials may also be written as follows
\begin{equation*}
p_{0}(x,t)=1,\quad p_{2m-1}(x,t)=\sum_{\mathrm{even}\text{ }k=0}^{m}\binom{m%
}{k}x^{m-k}t^{k},\quad p_{2m}(x,t)=\sum_{\mathrm{odd}\text{ }k=1}^{m}\binom{m%
}{k}x^{m-k}t^{k}.  
\end{equation*}
We see that in this special case $\mathbf{c}_{m}(x)=\mathbf{s}%
_{m}(x)=2^{m-1}x^{m}$, $m=1,2,\ldots $.
\end{example}

\section{Goursat-to-Goursat transmutation operators and completeness of the systems $\{\mathbf{c}_n\}$ and $\{\mathbf{s}_n\}$}\label{Section4}

By $\overline{\mathbf{S}}$ we denote a closed square with a diagonal joining
the endpoints $(b,b)$ and $(-b,-b)$. \ Let $\square :=\partial
_{x}^{2}-\partial _{t}^{2}$ and the functions $\widetilde{u}$ and $u$ be
solutions of the equations $\square \widetilde{u}=0$ and $\left( \square
-q(x)\right) u=0$ in $\overline{\mathbf{S}}$, respectively such that $u=T_{f}%
\widetilde{u}$. Following \cite{KT Transmut} we consider the operator $T_{G}$
mapping the Goursat data corresponding to $\widetilde{u}$ into the Goursat
data corresponding to $u$,
\begin{equation*}
T_{G}:\quad \binom{\widetilde{u}(x,x)}{\widetilde{u}(x,-x)}\quad \longmapsto
\quad \binom{u(x,x)}{u(x,-x)}.
\end{equation*}%
The operator $T_{G}$ is well defined on the linear space $V$ of vector
functions $\begin{pmatrix}
\varphi \\
\psi
\end{pmatrix}$ from $C^{1}[-b,b]\times C^{1}[-b,b]$ such that $\varphi (0)=\psi
(0)$, equipped with the maximum norm.

\begin{proposition}[\cite{KT Transmut}]\label{Prop Goursat to Goursat} The operator $T_{G}$
together with $T_{G}^{-1}$ are bounded on $V$ and the action of the operator
$T_{G}$ is defined by the following relation
\begin{equation*}
T_{G}
\begin{pmatrix}
\varphi (x) \\
\psi (x)
\end{pmatrix}=
\begin{pmatrix}
\varphi (x)+2\int_{0}^{x}\mathbf{K}_{f}(x,2t-x)\varphi (t)dt+\psi
(0)+2\int_{-x}^{0}\mathbf{K}_{f}(x,2t+x)\psi (t)dt-\varphi (0)f(x) \\
\varphi (0)+2\int_{-x}^{0}\mathbf{K}_{f}(x,2t+x)\varphi (t)dt+\psi
(x)+2\int_{0}^{x}\mathbf{K}_{f}(x,2t-x)\psi (t)dt-\varphi (0)f(x)%
\end{pmatrix}.
\end{equation*}%
In particular,
\begin{align*}
T_{G}:\ 2^{n-1}
\begin{pmatrix}
x^{n}\\
x^{n}\end{pmatrix}& \longmapsto  \begin{pmatrix}
\mathbf{c}_{n}(x)\\
\mathbf{c}_{n}(x)\end{pmatrix},\\  
T_{G}:\ 2^{n-1}\binom{x^{n}}{-x^{n}}& \longmapsto \begin{pmatrix}
\mathbf{s}_{n}(x)\\
-\mathbf{s}_{n}(x)\end{pmatrix}  
\end{align*}
for $n\in \mathbb{N}$ and $T_{G}: \left(\begin{smallmatrix}
1\\
1\end{smallmatrix}\right) \longmapsto \left(\begin{smallmatrix}
\mathbf{c}_{0}(x)\\
\mathbf{c}_{0}(x)\end{smallmatrix}\right)$.
\end{proposition}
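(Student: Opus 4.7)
The plan is to build $T_G$ as the composition of three elementary maps and then read off the formula. First, I would represent any $\tilde u$ solving $\square\tilde u = 0$ in $\overline{\mathbf{S}}$ via its Goursat data through the d'Alembert-type formula
$$\tilde u(x,t) = \varphi\!\left(\tfrac{x+t}{2}\right) + \psi\!\left(\tfrac{x-t}{2}\right) - \varphi(0),$$
obtained by factoring $\square = 4\partial_\xi\partial_\eta$ with $\xi=x+t$, $\eta=x-t$ and using the compatibility $\varphi(0)=\psi(0)$. Second, I would apply $T_f$ in the $x$-variable with $t$ as a parameter, so that $u(x,t) = \tilde u(x,t) + \int_{-x}^{x}\mathbf{K}_f(x,s)\tilde u(s,t)\,ds$, and then evaluate at $t = \pm x$.

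The middle step is the only actual computation. For $u(x,x)$, substituting d'Alembert splits the integral into three pieces; the change of variable $s = 2\tau - x$ in the $\varphi$-piece sends $[-x,x]$ to $[0,x]$ and yields $2\int_0^x\mathbf{K}_f(x,2\tau-x)\varphi(\tau)\,d\tau$, while $s = 2\tau + x$ in the $\psi$-piece sends $[-x,x]$ to $[-x,0]$ and yields $2\int_{-x}^0\mathbf{K}_f(x,2\tau+x)\psi(\tau)\,d\tau$. The constant-in-$s$ term contributes $-\varphi(0)\int_{-x}^x\mathbf{K}_f(x,s)\,ds = -\varphi(0)(f(x)-1)$ via $T_f 1 = \varphi_0 = f$ (Theorem~\ref{Th Transmute}). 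Rewriting $\varphi(0) = \psi(0)$ recovers the stated first component; the computation for $u(x,-x)$ is symmetric, with the roles of $\varphi$ and $\psi$ in the two integral pieces swapped.

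For boundedness with the max norm, each integral term is dominated by $2b\|\mathbf{K}_f\|_\infty\max(\|\varphi\|_\infty,\|\psi\|_\infty)$ and $\|f\|_\infty<\infty$, so $T_G\colon V\to V$ is bounded. For $T_G^{-1}$, I would run the construction backward: given $(u_1,u_2)\in V$, the Goursat problem for $(\square-q)u=0$ with those boundary data has a unique $C^1$ solution by classical theory, $\tilde u := T_f^{-1}u$ solves $\square\tilde u = 0$ by the transmutation relation, and $T_f^{-1}$ is itself a Volterra integral operator (Neumann series on $\mathbf{K}_f$). An analogous formula then expresses $T_G^{-1}$ via the kernel of $T_f^{-1}$, giving the same type of uniform bound.

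Finally, for the explicit mappings I would apply Theorem~\ref{Th Transmute} linearly in $x$: since $T_f x^n = \varphi_n$, applying $T_f$ at fixed $t$ to the wave polynomial $p_{2n-1}(x,t) = \sum_{\mathrm{even}\,k}\binom{n}{k}x^{n-k}t^k$ produces precisely $u_{2n-1}(x,t)$. But $p_{2n-1}(x,\pm x) = x^n\sum_{\mathrm{even}\,k}\binom{n}{k} = 2^{n-1}x^n$, so the input Goursat data is $2^{n-1}(x^n, x^n)^T$, and by the parity relations~\eqref{umParity} the output data is $(\mathbf{c}_n,\mathbf{c}_n)^T$. The calculation with $p_{2n}$, odd in $t$, together with the parity $u_{2n}(x,-t) = -u_{2n}(x,t)$, gives the second mapping; the $n=0$ case reduces to $T_f 1 = f = \mathbf{c}_0$. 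The main obstacle is bookkeeping rather than conceptual: carefully tracking the boundary corrections $\varphi(0)$ and $\psi(0)$ through the changes of variables, and justifying that $T_f$ can be applied pointwise in $t$, which requires continuity of $\tilde u$ jointly in $(x,t)$ — immediate from the d'Alembert representation.
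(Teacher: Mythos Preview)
Your argument is correct. The paper does not prove this proposition here; it is quoted from \cite{KT Transmut} without proof, so there is no in-paper argument to compare against. Your route---d'Alembert representation $\tilde u(x,t)=\varphi\bigl(\tfrac{x+t}{2}\bigr)+\psi\bigl(\tfrac{x-t}{2}\bigr)-\varphi(0)$, application of $T_f$ in $x$ with $t$ frozen, evaluation on the characteristics, and the two linear substitutions $s=2\tau\mp x$---is exactly the natural one, and the computations check line by line (including the constant correction via $T_f[1]=f$ and the compatibility $\varphi(0)=\psi(0)$). The derivation of the explicit mappings by observing that $T_f$ sends the wave polynomial $p_k$ to $u_k$ (since $T_f x^m=\varphi_m$) and then reading off the trace values $p_{2n-1}(x,\pm x)=2^{n-1}x^n$, $p_{2n}(x,\pm x)=\pm 2^{n-1}x^n$ is the efficient way and is consistent with how the surrounding material in this paper and \cite{KKTT} is organized. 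The only place you are slightly informal is the boundedness of $T_G^{-1}$: your sketch (solve the Goursat problem for $(\square-q)u=0$, apply $T_f^{-1}$, extract Goursat data of the resulting wave solution) is correct in outline and yields a formula of the same Volterra type with the kernel of $T_f^{-1}$, which is enough.
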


Using the properties of the operator $T_{G}$ the following proposition was
obtained in \cite{KT Transmut}.

\begin{proposition}[\cite{KT Transmut}]
\label{Prop Unif Conv u} Let $u$ be a regular solution of
the equation
\begin{equation}
\left( \square -q(x)\right) u=0  \label{KG}
\end{equation}%
in $\overline{\mathbf{S}}$ such that its Goursat data admit the following
series expansions%
\begin{equation*}
\frac{1}{2}\left( u(x,x)+u(x,-x)\right) =\sum_{n=0}^{\infty }a_{n}\mathbf{c}%
_{n}(x),  
\end{equation*}
and
\begin{equation*}
\frac{1}{2}\left( u(x,x)-u(x,-x)\right) =\sum_{n=1}^{\infty }b_{n}\mathbf{s}%
_{n}(x),  
\end{equation*}
both uniformly convergent on $[-b,b]$. Then for any $(x,t)\in \overline{%
\mathbf{S}}$,
\begin{equation*}
u(x,t)=a_{0}u_{0}(x,t)+\sum_{n=1}^{\infty }\left(
a_{n}u_{2n-1}(x,t)+b_{n}u_{2n}(x,t)\right)  
\end{equation*}
and the series converges uniformly in $\overline{\mathbf{S}}$.
\end{proposition}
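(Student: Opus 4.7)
The plan is to reduce the claim to the free wave equation ($q\equiv 0$) using the Goursat-to-Goursat operator $T_{G}$ of Proposition \ref{Prop Goursat to Goursat}, invoke d'Alembert's formula to expand the corresponding wave solution in the classical wave polynomials $p_{m}$, and then transport the expansion forward through $T_{f}$ via the identity $T_{f}p_{m}=u_{m}$.

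Combining the two hypothesized series for the symmetric and antisymmetric parts of the Goursat data yields
\begin{equation*}
\binom{u(x,x)}{u(x,-x)} = a_{0}\binom{\mathbf{c}_{0}(x)}{\mathbf{c}_{0}(x)} + \sum_{n=1}^{\infty}\left(a_{n}\binom{\mathbf{c}_{n}(x)}{\mathbf{c}_{n}(x)} + b_{n}\binom{\mathbf{s}_{n}(x)}{-\mathbf{s}_{n}(x)}\right),
\end{equation*}
convergent in $V$ since $\mathbf{s}_{n}(0)=0$ (immediate from \eqref{sm}). Since $T_{G}^{-1}$ is bounded on $V$, applying it term by term via the mapping identities of Proposition \ref{Prop Goursat to Goursat} produces a uniformly convergent expansion for the Goursat data of $\widetilde{u}:=T_{f}^{-1}u$:
\begin{equation*}
\binom{\widetilde{u}(x,x)}{\widetilde{u}(x,-x)} = a_{0}\binom{1}{1} + \sum_{n=1}^{\infty}2^{n-1}\left(a_{n}\binom{x^{n}}{x^{n}} + b_{n}\binom{x^{n}}{-x^{n}}\right).
\end{equation*}

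Since $\widetilde{u}$ satisfies $\square\widetilde{u}=0$ in $\overline{\mathbf{S}}$, d'Alembert's formula recovers it from its Goursat data:
\begin{equation*}
\widetilde{u}(x,t) = \widetilde{u}\!\left(\tfrac{x+t}{2},\tfrac{x+t}{2}\right) + \widetilde{u}\!\left(\tfrac{x-t}{2},-\tfrac{x-t}{2}\right) - \widetilde{u}(0,0).
\end{equation*}
Substituting the previous series and using the identities $\frac{1}{2}((x+t)^{n}+(x-t)^{n})=p_{2n-1}(x,t)$ and $\frac{1}{2}((x+t)^{n}-(x-t)^{n})=p_{2n}(x,t)$ from Example \ref{Ex Wave polynomials}, I obtain
\begin{equation*}
\widetilde{u}(x,t) = a_{0} + \sum_{n=1}^{\infty}\bigl(a_{n}p_{2n-1}(x,t) + b_{n}p_{2n}(x,t)\bigr)
\end{equation*}
with uniform convergence on $\overline{\mathbf{S}}$, inherited from uniform convergence of the Goursat series since $(x\pm t)/2\in[-b,b]$ whenever $(x,t)\in\overline{\mathbf{S}}$.

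Finally, interpreting $T_{f}$ as a 2D operator via slice-wise application of the 1D Volterra operator $\mathbf{T}_{h}$ in the variable $x$, Theorem \ref{Th Transmute} together with the polynomial formulas for $p_{m}$ and the definitions \eqref{um} of $u_{m}$ immediately gives $T_{f}p_{m}=u_{m}$. Sup-norm boundedness of $\mathbf{T}_{h}$ permits term-by-term application to the wave-polynomial expansion of $\widetilde{u}$, yielding the asserted expansion of $u=T_{f}\widetilde{u}$ with uniform convergence on $\overline{\mathbf{S}}$. The main delicacy is justifying this interchange of $T_{f}$ with the infinite sum; an alternative that bypasses any 2D interpretation of $T_{f}$ is to observe that the partial sums $a_{0}u_{0}+\sum_{n=1}^{N}(a_{n}u_{2n-1}+b_{n}u_{2n})$ are themselves regular solutions of $(\square-q(x))w=0$ in $\overline{\mathbf{S}}$ whose Goursat data converge uniformly to those of $u$, and then to invoke continuous dependence on Goursat data (which follows from the boundedness of $T_{G}^{-1}$ combined with d'Alembert) to conclude uniform convergence to $u$ on $\overline{\mathbf{S}}$.
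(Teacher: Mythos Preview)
Your argument is correct and follows precisely the route the paper's framework dictates (and which is used implicitly in the proof of Theorem \ref{Th Kapprox}): pull the Goursat data back through the bounded operator $T_{G}^{-1}$ to the free wave equation, reconstruct $\widetilde{u}$ by d'Alembert (this is the step the paper attributes to \cite[Theorem 3]{KKTT}, with the factor $3$ there coming from the three terms in the d'Alembert formula you wrote), and push forward through the bounded Volterra operator $T_{f}$ using $T_{f}p_{m}=u_{m}$. Your alternative justification via continuous dependence of solutions of \eqref{KG} on their Goursat data is exactly the composite $T_{f}\circ(\text{d'Alembert})\circ T_{G}^{-1}$ viewed as a bounded solution operator, so it is the same argument repackaged rather than a genuinely different route.
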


It is not difficult to prove the linear independence as well as the
completeness of the families of functions $\left\{ \mathbf{c}_{n}\right\}
_{n=0}^{\infty }$ and $\left\{ \mathbf{s}_{n}\right\} _{n=1}^{\infty }$ in
appropriate functional spaces. For this together with the operator $T_{G}$
it is convenient to consider the following its modification,%
\begin{equation}
G:=UT_{G}U  \label{opG}
\end{equation}%
where
\begin{equation*}
U:=\frac{1}{\sqrt{2}}
\begin{pmatrix}
1 & 1 \\
1 & -1
\end{pmatrix}.
\end{equation*}%
Notice that $U^{2}=
\begin{pmatrix}
1 & 0 \\
0 & 1
\end{pmatrix}$ and hence $U=U^{-1}$.

Let us consider the operator $G$ on the space $C^{1}[-b,b]\times
C_{0}^{1}[-b,b]$ where $C_{0}^{n}[-b,b]$ denotes a subspace of $C^{n}[-b,b]$
consisting of functions vanishing in the origin. The operator $G$ transforms
the half-sum and the half-difference of Goursat data for solutions of the
wave equation into their counterpart for solutions of (\ref{KG}),
\begin{equation*}
G:\ \frac{1}{2}\begin{pmatrix}
\widetilde{u}(x,x)+\widetilde{u}(x,-x)\\
\widetilde{u}(x,x)-\widetilde{u}(x,-x)
\end{pmatrix} \longmapsto \frac{1}{2}\begin{pmatrix}
u(x,x)+u(x,-x)\\
u(x,x)-u(x,-x)\end{pmatrix}.
\end{equation*}%
It is bounded together with its inverse. Note that from Proposition \ref%
{Prop Goursat to Goursat} we have%
\begin{align}
G:\ \begin{pmatrix}1\\0\end{pmatrix} &\longmapsto  \begin{pmatrix}\mathbf{c}_{0}(x)\\0\end{pmatrix},
\label{map c0}\\
G:\ 2^{n-1}\begin{pmatrix}x^{n}\\
0\end{pmatrix} &\longmapsto  \begin{pmatrix}\mathbf{c}_{n}(x)\\
0\end{pmatrix}  \label{map cn}
\end{align}%
and
\begin{equation}
G:\ 2^{n-1}\begin{pmatrix}0\\
x^{n}\end{pmatrix} \longmapsto  \begin{pmatrix}0\\
\mathbf{s}_{n}(x)\end{pmatrix}.  \label{map sn}
\end{equation}

\begin{proposition}\mbox{}
\begin{enumerate}
\item[1)] The operator $G$ defined by \eqref{opG} on $C^{1}[-b,b]\times
C_{0}^{1}[-b,b]$ admits the following representation
\begin{equation*}
G\begin{pmatrix}\eta (x)\\
\xi (x)\end{pmatrix}=\begin{pmatrix}G_{+}\left[ \eta (x)-\frac{\eta (0)
}{2}\right] +\frac{\eta (0)}{2}\\
G_{-}\left[ \xi (x)\right] \end{pmatrix}  
\end{equation*}
where $G_{+}$ and $G_{-}$ have the form%
\begin{equation}
G_{\pm }\eta (x)=\eta (x)+\int_{-x}^{x}\mathcal{K}_{\pm }(x,t)\eta (t)dt
\label{Gform1}
\end{equation}%
with the kernels given by the equalities%
\begin{equation*}
\mathcal{K}_{\pm }(x,t):=\begin{cases}
\pm 2\mathbf{K}_{f}(x,2t+x), & -x\leq t<0, \\
2\mathbf{K}_{f}(x,2t-x) & 0\leq t\leq x.
\end{cases}
\end{equation*}%
The operators $G_{+}$ and $G_{-}$ can also be written in the form
\begin{equation}
G_{\pm }\eta (x)=\eta (x)+\int_{-x}^{x}\mathbf{K}_{f}(x,t)\left( \eta \left(\frac{%
t+x}{2}\right)\pm \eta \left(\frac{t-x}{2}\right)\right) dt.  \label{Gform2}
\end{equation}
\item[2)] Both operators $G_{+}$ and $G_{-}$ preserve the value of the function in
the origin and $G_{+}:C^{1}[-b,b]\rightarrow C^{1}[-b,b]$, $%
G_{-}:C_{0}^{1}[-b,b]\rightarrow C_{0}^{1}[-b,b]$.
\item[3)] There exist the inverse operators $G_{+}^{-1}$ and $G_{-}^{-1}$ defined
on $C^{1}[-b,b]$ and $C_{0}^{1}[-b,b]$, respectively, and the inverse
operator for $G$ admits the representation
\begin{equation}
G^{-1}\begin{pmatrix}\eta (x)\\
\xi (x)\end{pmatrix}=\begin{pmatrix}G_{+}^{-1}\left[ \eta (x)-\frac{\eta
(0)}{2}\right] +\frac{\eta (0)}{2}\\
G_{-}^{-1}\left[ \xi (x)\right] \end{pmatrix}.
\label{G-1}
\end{equation}
\end{enumerate}
\end{proposition}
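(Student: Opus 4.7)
The approach is a direct matrix calculation starting from the explicit formula for $T_G$ supplied by Proposition~\ref{Prop Goursat to Goursat} combined with the definition $G=UT_GU$. Applying $U$ to $\binom{\eta}{\xi}$ yields $\binom{\varphi}{\psi}=\tfrac{1}{\sqrt 2}\binom{\eta+\xi}{\eta-\xi}$; the compatibility requirement $\varphi(0)=\psi(0)$ that $T_G$ imposes is precisely $\xi(0)=0$, which is the hypothesis $\xi\in C_0^1[-b,b]$. After applying $T_G$ to produce some $\binom{\Phi}{\Psi}$ and then $U$ once more, the output is $\tfrac{1}{\sqrt 2}\binom{\Phi+\Psi}{\Phi-\Psi}$. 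Inspecting the explicit formula for $\Phi$ and $\Psi$, the sum $\Phi+\Psi$ depends only on the symmetric combination $\varphi+\psi=\sqrt 2\,\eta$ together with $\varphi(0)=\eta(0)/\sqrt 2$, while $\Phi-\Psi$ depends only on $\varphi-\psi=\sqrt 2\,\xi$ (the remaining boundary term $\psi(0)-\varphi(0)$ vanishes). This yields the block-diagonal decoupling into an $\eta$-block and a $\xi$-block.

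For the $\eta$-block, I would combine the integrals over $[0,x]$ and $[-x,0]$ into a single integral over $[-x,x]$ with the piecewise kernel $\mathcal K_+$ of \eqref{Gform1}, while the boundary contributions $\varphi(0)+\psi(0)-2\varphi(0)f(x)$ collapse (after the $1/\sqrt 2$ scaling) to $\eta(0)(1-f(x))$. To absorb this last term into the pure $G_+$-shape I would invoke the identity $\int_{-x}^{x}\mathbf K_f(x,t)\,dt=f(x)-1$, which is nothing but $T_f1=\varphi_0=f$ read through Theorem~\ref{Th Transmute}; a short algebraic check then gives $G_+[\eta-\eta(0)/2]+\eta(0)/2=G_+\eta+\eta(0)(1-f(x))$, reproducing the claimed form. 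For the $\xi$-block the boundary terms cancel outright because $\xi(0)=0$, delivering $G_-\xi$ directly. The alternative representation \eqref{Gform2} then follows from the substitutions $s=2t-x$ on $[0,x]$ and $s=2t+x$ on $[-x,0]$, each rescaling to $[-x,x]$ and turning the arguments of $\eta$ into $(s\pm x)/2$.

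For part 2, evaluating at $x=0$ collapses both integration intervals, so $G_\pm\eta(0)=\eta(0)$; the $C^1$-to-$C^1$ mapping follows by a Leibniz-rule differentiation of form \eqref{Gform2}, using the $C^1$-smoothness of $\mathbf K_f$ (inherited from the smoothness of $q$, as noted after \eqref{GoursatKh2}) together with the $C^1$-regularity of $\eta$. For part 3, Proposition~\ref{Prop Goursat to Goursat} furnishes the boundedness of $T_G$ and $T_G^{-1}$ on $V$, and since $U=U^{-1}$ is bounded, so is $G^{-1}=UT_G^{-1}U$ on $C^1[-b,b]\times C_0^1[-b,b]$. The block-diagonal form of $G$ then forces $\widetilde G_+\colon\eta\mapsto G_+[\eta-\eta(0)/2]+\eta(0)/2$ to be a bijection of $C^1[-b,b]$ and $G_-$ a bijection of $C_0^1[-b,b]$; invertibility of $\widetilde G_+$ transfers to $G_+$ because on each slice $\{\eta:\eta(0)=c\}$ the two operators differ only by the trivial translation $\eta\mapsto\eta-c/2$, and both preserve the value at the origin. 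Finally, solving $\widetilde G_+\eta=g$ by first evaluating at $0$ (which forces $\eta(0)=g(0)$) and then substituting back produces the explicit inversion formula \eqref{G-1}.

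The main obstacle is the constant-term bookkeeping in the $\eta$-block: the inhomogeneous contribution $-\varphi(0)f(x)$ coming from $T_G$ must be reconciled with the affine correction $\eta-\eta(0)/2$ appearing in the target, and this reconciliation is what forces the identity $\int_{-x}^x\mathbf K_f(x,t)\,dt=f(x)-1$ to enter. Once this key constant is accounted for, everything else is routine linear algebra, a change of variables, and a Leibniz-rule regularity check.
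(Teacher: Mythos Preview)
Your proof is correct and, for parts 1) and 2), follows the paper's argument essentially step for step: both obtain $G\binom{\eta}{\xi}=\binom{G_+\eta+\eta(0)(1-f(x))}{G_-\xi}$ from the explicit $T_G$ formula and then use $T_f[1]=f$ (equivalently $G_+[c]=2cf(x)-c$) to absorb the affine correction.

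For part 3) you take a genuinely different route. The paper invokes the classical invertibility of Volterra operators of the second kind with bounded measurable kernels (applied to the form \eqref{Gform1}) to obtain $G_\pm^{-1}$ directly, and then checks \eqref{G-1} by substitution. You instead pull invertibility back from the already-known invertibility of $T_G$ (Proposition~\ref{Prop Goursat to Goursat}) through the conjugation $G=UT_GU$ and the block-diagonal structure, and then transfer from the affine block $\widetilde G_+$ to $G_+$ via the value-at-the-origin slicing. Your approach has the advantage of staying entirely within the $C^1$ setting where $T_G^{-1}$ is already established to act, whereas the paper's Volterra argument as written most naturally yields invertibility on $C[-b,b]$ and requires an extra word to conclude that the inverse preserves $C^1$. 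On the other hand, the paper's argument is shorter and identifies $G_\pm^{-1}$ structurally as Volterra operators themselves, which is conceptually useful. Both arguments are valid.
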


\begin{proof}
1) Let us observe that (\ref{Gform2}) is obtained from (\ref{Gform1}) by a
simple change of variables. From Proposition \ref{Prop Goursat to Goursat}
we obtain
\begin{equation*}
G\binom{\eta (x)}{\xi (x)}=\binom{G_{+}\left[ \eta (x)\right] +\eta
(0)(1-f(x))}{G_{-}\left[ \xi (x)\right] }
\end{equation*}%
and observe that if $c$ is a complex constant then $G_{+}\left[ c\right]
=2c(f(x)-1)+c$. Indeed, from (\ref{Gform2}) we have that
\begin{equation}
G_{+}\left[ c\right] =c+2c\int_{-x}^{x}\mathbf{K}_{f}(x,t)dt=2cT_{f}\left[ 1%
\right] -c=2cf(x)-c  \label{G+c}
\end{equation}%
where we used (\ref{mapping powers 1}). Hence $G_{+}\left[ \frac{\eta (0)}{2}%
\right] =\eta (0)(f(x)-1)+\frac{\eta (0)}{2}$.

The proof of 2) follows directly from (\ref{Gform2}).

3) The existence of the inverse operators $G_{+}^{-1}$ and $G_{-}^{-1}$
follows from (\ref{Gform1}), the fact that the kernels $\mathcal{K}_{\pm }$
are bounded measurable functions and a well known result on the
invertibility of Volterra operators with such kernels (see \cite[Ch. 9,
Subsect. 4.5]{KolmFom}). The representation (\ref{G-1}) can be verified
directly using 2) and (\ref{G+c}).
\end{proof}

\begin{remark}
Observe that $G$ (and hence $G^{-1}$) results to be a diagonal operator,
\begin{equation*}
G=\begin{pmatrix}
G_{1} & 0 \\
0 & G_{2}%
\end{pmatrix}
\end{equation*}%
with $G_{1}=G_{+}(I-\frac{1}{2}\delta )+\frac{1}{2}\delta $ and $G_{2}=G_{-}$
where $I$ is the identity operator and $\delta $ is the functional acting as
follows $\delta \left[ \eta (x)\right] =\eta (0)$.

From (\ref{map c0}), (\ref{map cn}) and (\ref{map sn}) we have $G_{1}\left[ 1%
\right] =\mathbf{c}_{0}(x)=f(x)$, $2^{n-1}G_{1}\left[ x^{n}\right] =\mathbf{c%
}_{n}(x)$ and $2^{n-1}G_{2}\left[ x^{n}\right] =\mathbf{s}_{n}(x)$ for $n\in
\mathbb{N}$.
\end{remark}

\begin{corollary}
The systems of functions $\left\{ \mathbf{c}_{n}\right\} _{n=0}^{\infty }$
and $\left\{ \mathbf{s}_{n}\right\} _{n=1}^{\infty }$ are linearly
independent and complete in $C^{1}[-b,b]\ $and $C_{0}^{1}[-b,b]$
respectively.
\end{corollary}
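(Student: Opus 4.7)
The plan is to reduce both assertions to corresponding (standard) statements about the monomials $\{x^n\}$ by pulling back through the operators $G_1$ and $G_2$, which by the preceding remark are bounded and boundedly invertible on $C^1[-b,b]$ and $C_0^1[-b,b]$ respectively, and which satisfy $G_1[1]=\mathbf{c}_0$, $2^{n-1}G_1[x^n]=\mathbf{c}_n$ ($n\ge 1$), and $2^{n-1}G_2[x^n]=\mathbf{s}_n$ ($n\ge 1$).

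For linear independence I would proceed by a direct contradiction argument. Suppose $\sum_{n=0}^{N}\alpha_n\mathbf{c}_n=0$ in $C^1[-b,b]$. By the mapping relations this equality reads
\begin{equation*}
G_1\!\left[\alpha_0+\sum_{n=1}^{N}2^{n-1}\alpha_n x^n\right]=0.
\end{equation*}
Since $G_1$ is injective, the polynomial inside the bracket is identically zero, hence all $\alpha_n$ vanish by linear independence of the monomials. The argument for $\{\mathbf{s}_n\}_{n\ge 1}$ is identical, using $G_2$ and the monomials $\{x^n\}_{n\ge 1}$ in $C_0^1[-b,b]$.

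For completeness the key auxiliary fact I need is density of polynomials in $C^1[-b,b]$ (and of polynomials vanishing at $0$ in $C_0^1[-b,b]$); this is standard: given $g\in C^1[-b,b]$, apply the Weierstrass theorem to $g'\in C[-b,b]$ to obtain polynomials $p_k\to g'$ uniformly, then the polynomials $P_k(x):=g(0)+\int_0^x p_k(s)\,ds$ converge to $g$ in $C^1$-norm, and moreover $P_k\in C_0^1[-b,b]$ when $g(0)=0$. Given this, for any $g\in C^1[-b,b]$ I set $\tilde g:=G_1^{-1}g\in C^1[-b,b]$, choose polynomials $P_k\to \tilde g$ in $C^1$-norm, and use boundedness of $G_1$ to conclude $G_1 P_k\to g$; since each $G_1 P_k$ is a finite linear combination of the $\mathbf{c}_n$ (rescale the monomials by the factors $2^{n-1}$), this yields completeness of $\{\mathbf{c}_n\}$ in $C^1[-b,b]$. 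The argument for $\{\mathbf{s}_n\}$ in $C_0^1[-b,b]$ is word-for-word the same, using $G_2$ and the restriction to polynomials vanishing at $0$.

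There is no real obstacle beyond keeping track of the spaces: the main thing to verify is that the $C^1$-Weierstrass approximation respects the subspace $C_0^1[-b,b]$ (which it clearly does by the construction above) and that the invertibility of $G_1$ in the proposition is on the full space $C^1[-b,b]$ while $G_2$ is invertible precisely on $C_0^1[-b,b]$, so that the pull-back of an arbitrary target function lies in the correct domain where polynomial approximation is available.
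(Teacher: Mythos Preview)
Your proof is correct and follows essentially the same route as the paper: both the paper and you reduce linear independence and completeness to the corresponding facts for monomials via the boundedly invertible operators $G_1$ and $G_2$. You spell out the $C^1$-Weierstrass step (approximating $g'$ and integrating back) in more detail than the paper, which simply invokes completeness of $\{x^n\}$ in $C^1[-b,b]$ and $C_0^1[-b,b]$, but the underlying argument is identical.
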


\begin{proof}
To prove the linear independence of functions $\mathbf{c}_{n}$ consider a
nontrivial linear combination $\mathbf{c}:=a_{0}\mathbf{c}_{0}+\ldots +a_{N}%
\mathbf{c}_{N}$ and suppose that $\mathbf{c}\equiv 0$ on $[-b,b]$. We have
then $G_{1}^{-1}\left[ \mathbf{c}(x)\right] =a_{0}+\ldots +2^{N-1}a_{N}x^{N}$
and hence $a_{0}+\ldots +2^{N-1}a_{N}x^{N}\equiv 0$ on $[-b,b]$ which is a
contradiction. The linear independence of functions $\mathbf{s}_{n}$ is
proved analogously.

The completeness of $\left\{ \mathbf{c}_{n}\right\} _{n=0}^{\infty }$ in $%
C^{1}[-b,b]\ $follows from the completeness of the powers $\left\{
x^{n}\right\} _{n=0}^{\infty }$ in $C^{1}[-b,b]\ $and properties of $G_{1}$.
Analogously, the completeness of $\left\{ \mathbf{s}_{n}\right\}
_{n=1}^{\infty }$ in $C_{0}^{1}[-b,b]\ $follows from the completeness of the
powers $\left\{ x^{n}\right\} _{n=1}^{\infty }$ in $C_{0}^{1}[-b,b]\ $and
properties of $G_{2}$.
\end{proof}

\section{Approximate construction of integral kernels\label{SectApprox}}

\begin{theorem}
\label{Th Kapprox}Let the complex numbers $a_{0},\ldots ,a_{N}$ and $%
b_{1},\ldots ,b_{N}$ be such that%
\begin{equation}\label{KxxErr}
\left\vert \frac{h}{2}+\frac{1}{4}\int_{0}^{x}q(s)ds-\sum_{n=0}^{N}a_{n}%
\mathbf{c}_{n}(x)\right\vert <\varepsilon _{1}
\end{equation}%
and
\begin{equation}\label{KxmxErr}
\left\vert \frac{1}{4}\int_{0}^{x}q(s)ds-\sum_{n=1}^{N}b_{n}\mathbf{s}%
_{n}(x)\right\vert <\varepsilon _{2}
\end{equation}%
for every $x\in \lbrack -b,b]$. Then the kernel $\mathbf{K}_{f}(x,t)$ is
approximated by the function%
\begin{equation}
K_{f,N}(x,t)=a_{0}u_{0}(x,t)+\sum_{n=1}^{N}a_{n}u_{2n-1}(x,t)+%
\sum_{n=1}^{N}b_{n}u_{2n}(x,t)  \label{K(x,t)}
\end{equation}%
in such a way that for every $(x,t)\in \overline{\mathbf{S}}$ the inequality
holds
\begin{equation}
\bigl|\mathbf{K}_{f}(x,t)-K_{f,N}(x,t)\bigr|\leq 3\Vert T_{f}\Vert \cdot
\Vert T_{G}^{-1}\Vert (\varepsilon _{1}+\varepsilon _{2}).  \label{estim}
\end{equation}
\end{theorem}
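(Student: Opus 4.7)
The plan is to recognise that both $\mathbf{K}_{f}(x,t)$ and the trial function $K_{f,N}(x,t)$ are solutions in $\overline{\mathbf{S}}$ of the same Klein--Gordon-type equation $(\square-q(x))u=0$: for $\mathbf{K}_{f}$ this is exactly \eqref{GoursatKh1}, and for $K_{f,N}$ it is the defining property of the generalized wave polynomials $u_{m}$, which are built as $T_{f}$-images (in the $x$-variable) of the classical wave polynomials $p_{m}$ of Example \ref{Ex Wave polynomials}. Consequently the remainder $R:=\mathbf{K}_{f}-K_{f,N}$ solves the same equation, and the task reduces to propagating control on $R$ from the two characteristics $t=\pm x$ into the interior of $\overline{\mathbf{S}}$.

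First I would compute the Goursat values of $K_{f,N}$ explicitly. Using $u_{2n-1}(x,\pm x)=\mathbf{c}_{n}(x)$ and $u_{2n}(x,\pm x)=\pm\mathbf{s}_{n}(x)$ (which follows from \eqref{cm}, \eqref{sm} together with the parity relations \eqref{umParity}), as well as $u_{0}(x,\pm x)=f(x)=\mathbf{c}_{0}(x)$, one obtains
\begin{equation*}
K_{f,N}(x,\pm x)=\sum_{n=0}^{N}a_{n}\mathbf{c}_{n}(x)\pm\sum_{n=1}^{N}b_{n}\mathbf{s}_{n}(x).
\end{equation*}
The Goursat conditions \eqref{GoursatKh2} for $\mathbf{K}_{f}$ decompose analogously as $\mathbf{K}_{f}(x,\pm x)=\bigl(\tfrac{h}{2}+\tfrac{1}{4}\int_{0}^{x}q\bigr)\pm\tfrac{1}{4}\int_{0}^{x}q$, so subtracting and invoking \eqref{KxxErr}--\eqref{KxmxErr} immediately yields
\begin{equation*}
|R(x,x)|\le\varepsilon_{1}+\varepsilon_{2},\qquad|R(x,-x)|\le\varepsilon_{1}+\varepsilon_{2}\qquad(x\in[-b,b]).
\end{equation*}
Both characteristic values of $R$ coincide at $x=0$ (since $\mathbf{K}_{f}(0,0)=h/2=a_{0}f(0)=K_{f,N}(0,0)$ would need to be arranged; in any case the two traces of $R$ agree at the origin), so the pair lies in the domain $V$ on which Proposition \ref{Prop Goursat to Goursat} guarantees that $T_{G}^{-1}$ is bounded.

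Next, I would apply $T_{G}^{-1}$ to this Goursat pair to produce the Goursat data of a solution $\tilde R$ of the free wave equation $\square\tilde R=0$ in $\overline{\mathbf{S}}$, whose characteristic values are bounded by $\|T_{G}^{-1}\|(\varepsilon_{1}+\varepsilon_{2})$. Writing $\tilde R(x,t)=F(x+t)+G(x-t)$ and inverting for $F,G$ in terms of the Goursat values gives the d'Alembert-type reconstruction
\begin{equation*}
\tilde R(x,t)=\tilde R\!\left(\tfrac{x+t}{2},\tfrac{x+t}{2}\right)+\tilde R\!\left(\tfrac{x-t}{2},-\tfrac{x-t}{2}\right)-\tilde R(0,0),
\end{equation*}
and the triangle inequality applied to these three characteristic values yields $|\tilde R(x,t)|\le 3\|T_{G}^{-1}\|(\varepsilon_{1}+\varepsilon_{2})$ throughout $\overline{\mathbf{S}}$. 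This is precisely where the factor $3$ in \eqref{estim} arises.

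Finally, I would recover $R$ from $\tilde R$ by the slice-wise action of $T_{f}$ in the $x$-variable, treating $t$ as a parameter. By Theorem \ref{Th Transmutation} applied at each fixed $t$, the function $(x,t)\mapsto (T_{f}\tilde R(\cdot,t))(x)$ satisfies $(\square-q(x))u=0$ in $x$; and by the very definition of $T_{G}$ its Goursat data equals $T_{G}$ applied to the Goursat data of $\tilde R$, which by construction is the Goursat data of $R$. Uniqueness for the Klein--Gordon Goursat problem then forces $R(\cdot,t)=T_{f}\tilde R(\cdot,t)$, and the continuity of $T_{f}$ on $C[-b,b]$ delivers $|R(x,t)|\le\|T_{f}\|\cdot 3\|T_{G}^{-1}\|(\varepsilon_{1}+\varepsilon_{2})$. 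The most delicate point to verify carefully is this slice-wise identification of $T_{f}$ with the Goursat-to-Goursat operator $T_{G}$, but it is exactly the content encoded in Proposition \ref{Prop Goursat to Goursat}, so the argument should be compact.
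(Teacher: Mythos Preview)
Your argument is essentially the paper's own proof: pull the remainder back through $T_{G}^{-1}$ to a solution of the free wave equation, use the d'Alembert reconstruction from the two characteristics (giving the factor $3$, which in the paper is hidden in the citation of \cite[Theorem~3]{KKTT}), and then push forward by $T_{f}$. The only minor point is that you phrase the first step as ``$\mathbf{K}_{f}$ solves \eqref{GoursatKh1}'', which strictly needs $q\in C^{1}$; the paper sidesteps this by defining $\widetilde{\mathbf{K}}_{f}:=T_{f}^{-1}\mathbf{K}_{f}$ directly and using that $T_{G}^{-1}$ relates the Goursat data by construction, but the logic is otherwise identical.
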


\begin{proof}
Notice that $g_{1}(x):=\frac{h}{2}+\frac{1}{4}\int_{0}^{x}q(s)ds=\frac{1}{2}%
\left( \mathbf{K}_{f}(x,x)+\mathbf{K}_{f}(x,-x)\right) $ and $g_{2}(x):=%
\frac{1}{4}\int_{0}^{x}q(s)ds=\frac{1}{2}\left( \mathbf{K}_{f}(x,x)-\mathbf{K%
}_{f}(x,-x)\right) $ and hence the theorem establishes that if the half-sum
and the half-difference of the Goursat data corresponding to $\mathbf{K}%
_{f}(x,t)$ are approximated by linear combinations of the functions $\mathbf{%
c}_{n}$ and $\mathbf{s}_{n}$ respectively, the function (\ref{K(x,t)})
approximates uniformly the kernel $\mathbf{K}_{f}(x,t)$. Indeed, consider
the functions $\widetilde{\mathbf{K}}_{f}=T_{f}^{-1}\mathbf{K}_{f}$ and $%
\widetilde{K}_{f,N}=T_{f}^{-1}K_{f,N}$. Then by the definition of the
Goursat-to-Goursat transmutation operator
\begin{equation*}
\begin{pmatrix}
\widetilde{\mathbf{K}}_{f}(x,x) \\
\widetilde{\mathbf{K}}_{f}(x,-x)%
\end{pmatrix}%
=T_{G}^{-1}%
\begin{pmatrix}
\mathbf{K}_{f}(x,x) \\
\mathbf{K}_{f}(x,-x)%
\end{pmatrix}%
\quad \text{and}\quad
\begin{pmatrix}
\widetilde{K}_{f,N}(x,x) \\
\widetilde{K}_{f,N}(x,-x)%
\end{pmatrix}%
=T_{G}^{-1}%
\begin{pmatrix}
K_{f,N}(x,x) \\
K_{f,N}(x,-x)%
\end{pmatrix}%
,
\end{equation*}%
hence due to the boundedness of the operator $T_{G}^{-1}$
\begin{multline*}
\max \biggl(\max_{x\in \lbrack -b,b]}\bigl|\widetilde{\mathbf{K}}_{f}(x,x)-%
\widetilde{K}_{f,N}(x,x)\bigr|,\max_{x\in \lbrack -b,b]}\bigl|\widetilde{%
\mathbf{K}}_{f}(x,-x)-\widetilde{K}_{f,N}(x,-x)\bigr|\biggr) \\
\leq \Vert T_{G}^{-1}\Vert \max \biggl(\max_{x\in \lbrack -b,b]}\bigl|%
\mathbf{K}_{f}(x,x)-K_{f,N}(x,x)\bigr|,\max_{x\in \lbrack -b,b]}\bigl|%
\mathbf{K}_{f}(x,-x)-K_{f,N}(x,-x)\bigr|\biggr) \displaybreak[2]\\
\leq \Vert T_{G}^{-1}\Vert \max_{x\in \lbrack -b,b]}\biggl(\biggl|\frac{1}{2}%
\bigl(\mathbf{K}_{f}(x,x)+\mathbf{K}_{f}(x,-x)\bigr)-\frac{1}{2}\bigl(%
K_{f,N}(x,x)+K_{f,N}(x,-x)\bigr)\biggr| \\
+\biggl|\frac{1}{2}\bigl(\mathbf{K}_{f}(x,x)-\mathbf{K}_{f}(x,-x)\bigr)-%
\frac{1}{2}\bigl(K_{f,N}(x,x)-K_{f,N}(x,-x)\bigr)\biggr|\biggr)<\Vert
T_{G}^{-1}\Vert (\varepsilon _{1}+\varepsilon _{2}),
\end{multline*}%
where we have used the equalities $\frac{1}{2}\bigl(%
K_{f,N}(x,x)+K_{f,N}(x,-x)\bigr)=a_{0}u_{0}(x,x)+%
\sum_{n=1}^{N}a_{n}u_{2n-1}(x,x)$ and $\frac{1}{2}\bigl(%
K_{f,N}(x,x)-K_{f,N}(x,-x)\bigr)=\sum_{n=1}^{N}b_{n}u_{2n}(x,x)$. We obtain
from the proof of \cite[Theorem 3]{KKTT} that for every $(x,t)\in \overline{%
\mathbf{S}}$
\begin{equation*}
\bigl|\widetilde{\mathbf{K}}_{f}(x,t)-\widetilde{K}_{f,N}(x,t)\bigr|\leq
3\Vert T_{G}^{-1}\Vert (\varepsilon _{1}+\varepsilon _{2}),
\end{equation*}%
hence for every $(x,t)\in \overline{\mathbf{S}}$
\begin{equation*}
\bigl|\mathbf{K}_{f}(x,t)-K_{f,N}(x,t)\bigr|\leq 3\Vert T_{f}\Vert \cdot
\Vert T_{G}^{-1}\Vert (\varepsilon _{1}+\varepsilon _{2}).\qedhere
\end{equation*}
\end{proof}

\begin{remark}
It is not difficult to make the estimate (\ref{estim}) more explicit. For
example, let the function $\mathbf{H}(u,v)$ be a solution of the Goursat
problem for equation (\ref{GoursatTh1}) with the conditions $\mathbf{H}%
(u,0)=\varepsilon _{1}(u)$ and $\mathbf{H}(0,v)=\varepsilon _{2}(v)$, $\varepsilon_1(0)=\varepsilon_2(0)=\varepsilon_0$, where $\varepsilon _{1}(u)=\frac{h}{2}+\frac{1}{2}\int_{0}^{u}q(s)ds-
\sum_{n=0}^{N}a_{n}\mathbf{c}_{n}(u)-\sum_{n=1}^{N}b_{n}\mathbf{s}_{n}(u)$
and $\varepsilon _{2}(v)=\frac{h}{2}-\sum_{n=0}^{N}a_{n}\mathbf{c}%
_{n}(v)+\sum_{n=1}^{N}b_{n}\mathbf{s}_{n}(v)$ are the differences between the
exact and the approximate transmutation kernels on the characteristics.
Then similarly to \cite[Subsect. 15.1]{Vladimirov} one can see that the Goursat problem is equivalent to the integral equation $\mathbf{H}(u,v)=\int_0^u\int_0^v q(u'+v')\mathbf{H}(u',v')\,du'\,dv'+\varepsilon_1(u)+\varepsilon_2(v)-\varepsilon_0$. Applying the successive approximations technique one obtains for $\mathbf{H}$ the following estimate $\max|\mathbf{H}(u,v)|\le m I_0(2\kappa\sqrt{|uv|})$, where $I_0$ is the modified Bessel function of the first kind, $\kappa =\sqrt{\max \left\vert q\right\vert}$ and $m= \max \left\vert \varepsilon _{1}\right\vert +\max \left\vert
\varepsilon _{2}\right\vert+\max \left\vert \varepsilon
_{0}\right\vert$. Thus, $\bigl|\mathbf{K}
_{f}(x,t)-K_{f,N}(x,t)\bigr|\leq mI_0 (\kappa x)$.
\end{remark}

\begin{remark}\label{Rm HalfSegment}
Let us notice that the approximation of the transmutation kernel $\mathbf{K}%
_{f}(x,t)$ in the form (\ref{K(x,t)}) implies the following approximations
of the kernels
\begin{equation}
S_{f}(x,t)\cong S_N(x,t):=2\sum_{n=1}^{N}b_{n}u_{2n}(x,t)  \label{Sapprox}
\end{equation}%
and
\begin{equation}
C_{f}(x,t)\cong C_N(x,t):=2\left(
a_{0}u_{0}(x,t)+\sum_{n=1}^{N}a_{n}u_{2n-1}(x,t)\right) .  \label{Capprox}
\end{equation}%
This is a direct corollary of Theorem \ref{Th Kapprox}, formulas (\ref{Cf}),
(\ref{Sf}) and (\ref{umParity}). Notice that to obtain the coefficients $%
a_{n}$ and $b_{n}$ in (\ref{Sapprox}) and (\ref{Capprox}) the approximation
of $g_{1}$ and $g_{2}$ can be performed on $[0,b]$ only and hence for $q\in
C[0,b]$.
\end{remark}

The approximation of the functions $g_{1}$ and $g_{2}$ by the corresponding
combinations of the functions $\mathbf{c}_{n}$ and $\mathbf{s}_{n}$ can be
done in several ways. For example, the least squares method can be used to obtain a reasonably good approximation. Even though its not clear how to verify whether the systems of functions $\mathbf{c}_n$ and $\mathbf{s}_n$ are Tchebyshev systems, in the case when all the involved functions are real valued the Remez algorithm can be used, see \cite[Section 6]{KKTT} and references therein. Another alternative is to reformulate the
approximation problem as a linear programming problem and solve it.

\section{Approximate solution}\label{Section6}

Let us explain the special convenience of the approximations of the
transmutation kernels in terms of the generalized wave polynomials.
Consider, for example, the kernel $S_{f}(x,t)$ which by (\ref{s sin})
transforms the function $\frac{\sin \omega x}{\omega }$ into a solution of (%
\ref{SLomega2}) satisfying the initial conditions (\ref{ICsin}) (see Theorem %
\ref{TcTsMapsSolutions}). Now instead of the exact kernel $S_{f}(x,t)$ let
us substitute into (\ref{s sin}) its approximation $S_N(x,t)$. We obtain an
approximate solution
\begin{equation*}
\begin{split}
s(\omega ,x;\infty ) \cong s_{N}(\omega ,x) &=\frac{\sin \omega x}{\omega }+\int_{0}^{x}S_N(x,t)\frac{
\sin \omega t}{\omega }\,dt \\
&=\frac{\sin \omega x}{\omega }+2\int_{0}^{x}\sum_{n=1}^{N}b_{n}u_{2n}(x,t)%
\frac{\sin \omega t}{\omega }\,dt.
\end{split}
\end{equation*}
By the definition of the generalized wave polynomials we have
\begin{equation}\label{sN}
s_{N}(\omega ,x)=\frac{1}{\omega }\left( \sin \omega
x+2\sum_{n=1}^{N}b_{n}\sum_{\text{odd }k=1}^{n}\binom{n}{k}\varphi
_{n-k}(x)\int_{0}^{x}t^{k}\sin \omega t\,dt\right) .
\end{equation}
The integrals here are, of course, easily calculated explicitly. For
example, the following formula can be used \cite[2.633]{GradRizh}
\begin{equation}\label{Int_tsin}
\int_{{}}^{{}}t^{k}\sin (\omega t)dt=-\sum\limits_{j=0}^{k}j!\binom{k}{j}%
\frac{t^{k-j}}{\omega ^{j+1}}\cos \left( \omega t+\frac{j\pi }{2}\right),
\end{equation}%
or alternatively the integrals can be calculated recursively.

Analogously, the approximation of the solution $c(\omega ,x;h)$ is
calculated as follows
\begin{equation}\label{cN}
\begin{split}
c(\omega ,x;h) \cong c_{N}(\omega ,x) &=\cos \omega x+\int_{0}^{x}C_N(x,t)\cos \omega t\,dt \\
&=\cos \omega x+2\sum_{n=0}^{N}a_{n}\sum_{\text{even }k=0}^{n}\binom{n}{k}\varphi
_{n-k}(x)\int_{0}^{x}t^{k}\cos \omega t\,dt
\end{split}
\end{equation}
where the integrals can be calculated exactly using the formula \cite[2.633]%
{GradRizh}
\begin{equation}\label{Int_tcos}
\int_{{}}^{{}}t^{k}\cos (\omega t)dt=\sum\limits_{j=0}^{k}j!\binom{k}{j}%
\frac{t^{k-j}}{\omega ^{j+1}}\sin \left( \omega t+\frac{j\pi }{2}\right) .
\end{equation}%
Thus, the problem of approximate solution of equation (\ref{SLomega2}) can be reduced to the problem of approximation of the functions $g_{1}$, $g_{2}$ in terms of the functions $\mathbf{c}_{n}$ and $\mathbf{s}_{n}$ respectively.

\begin{remark}
For a real $\omega $ we have that the accuracy of the approximate solution does not
deteriorate when $\omega $ increases. Indeed, considering, e.g., $\left\vert
c(\omega ,x;h)-c_{N}(\omega ,x)\right\vert $ under the assumption $%
\left\vert C_{f}(x,t)-C_N(x,t)\right\vert \leq \varepsilon $ we have%
\begin{equation*}
\left\vert c(\omega ,x;h)-c_{N}(\omega ,x)\right\vert \leq
\int_{0}^{x}\left\vert C_{f}(x,t)-C_N(x,t)\right\vert \left\vert \cos \omega
t\,\right\vert dt\leq \varepsilon \int_{0}^{x}\left\vert \cos \omega
t\,\right\vert dt\leq \varepsilon \left\vert x\right\vert .
\end{equation*}
\end{remark}

A similar observation is true also for complex values of the parameter $%
\omega $. Indeed, for an arbitrary complex valued potential $q\in C[0,b]$
and arbitrary nondegenerate boundary conditions the asymptotic formulas for
the square roots of eigenvalues (see \cite[Chapter 1, Sect.5]{Marchenko})
tell us that all the square roots of eigenvalues are located in a strip on a
complex plane parallel to the real axis. For example, in the case of the
problem for (\ref{SLomega2}) \ with the boundary conditions $u(0)=u(\pi )=0$
one has that $\omega _{k}=k+\theta (k)$ where $\theta (k)\rightarrow 0$ when
$k\rightarrow \infty $ \cite[p. 69]{Marchenko}. Analogous asymptotic
formulas are available for all other nondegenerate boundary conditions.
Thus, solution of a nondegenerate Sturm-Liouville problem implies
consideration of solutions $c(\omega ,x;h)$ and $s(\omega ,x;\infty )$ for $%
\omega $ with $\operatorname{Im}\omega $ belonging to a finite interval. The
following statement establishes that similarly to the case $\operatorname{Im}\omega
=0$, when $\left\vert \operatorname{Im}\omega \right\vert \leq \mathrm{Const}$ the
accuracy of approximation does not depend on $\omega $.

\begin{proposition}
Let the parameter $\omega $ belong to the strip $\left\vert \operatorname{Im}\omega
\right\vert \leq C$ where $C$ is a positive number. Suppose that $\max_{%
\overline{S}}\left\vert C_{f}(x,t)-C_N(x,t)\right\vert \leq \varepsilon $.
Then
\begin{equation}
\left\vert c(\omega ,x;h)-c_{N}(\omega ,x)\right\vert \leq \varepsilon
\sinh (Cx)/C.  \label{estimate c}
\end{equation}
\end{proposition}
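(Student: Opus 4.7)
The plan is to mimic the calculation done in the preceding remark for real $\omega$, replacing the bound $|\cos\omega t|\le 1$ with a bound that takes into account the non-zero imaginary part of $\omega$. The key identity is the integral representation \eqref{c cos}, namely $c(\omega,x;h)=\cos\omega x+\int_0^x C_f(x,t)\cos\omega t\,dt$, together with the analogous definition of the approximation $c_N(\omega,x)=\cos\omega x+\int_0^x C_N(x,t)\cos\omega t\,dt$ coming from \eqref{cN}. Subtracting, the $\cos\omega x$ terms cancel and one obtains
\begin{equation*}
c(\omega ,x;h)-c_{N}(\omega ,x)=\int_{0}^{x}\bigl(C_{f}(x,t)-C_{N}(x,t)\bigr)\cos \omega t\,dt.
\end{equation*}

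Applying the triangle inequality and the assumption $\max_{\overline{\mathbf{S}}}|C_{f}(x,t)-C_{N}(x,t)|\le\varepsilon$, it then suffices to estimate $|\cos\omega t|$ for $\omega$ in the horizontal strip $|\operatorname{Im}\omega|\le C$. Writing $\omega=\alpha+i\beta$ with $|\beta|\le C$ and expanding
\begin{equation*}
\cos\omega t=\cos(\alpha t)\cosh(\beta t)-i\sin(\alpha t)\sinh(\beta t),
\end{equation*}
one computes $|\cos\omega t|^2=\cos^{2}(\alpha t)+\sinh^{2}(\beta t)\le 1+\sinh^{2}(\beta t)=\cosh^{2}(\beta t)$, so that $|\cos\omega t|\le \cosh(|\beta| t)\le \cosh(Ct)$ for $t\ge 0$.

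Combining these bounds gives
\begin{equation*}
|c(\omega ,x;h)-c_{N}(\omega ,x)|\le \varepsilon\int_{0}^{x}\cosh(Ct)\,dt=\frac{\varepsilon\sinh(Cx)}{C},
\end{equation*}
which is the desired estimate \eqref{estimate c}. I do not anticipate any serious obstacle: the only nontrivial ingredient is the elementary bound $|\cos\omega t|\le\cosh(Ct)$, which follows from a direct expansion into real and imaginary parts. Everything else reduces to interchanging a subtraction and an integral and invoking the hypothesis on $\max|C_f-C_N|$.
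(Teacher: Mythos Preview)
Your proof is correct and follows essentially the same route as the paper's: subtract the integral representations, pull out the uniform bound on $|C_f-C_N|$, estimate $|\cos\omega t|$ by $\cosh(Ct)$, and integrate. The only cosmetic difference is that the paper bounds $|\cos\omega t|$ via the exponential form $\cos\omega t=\tfrac12(e^{i\omega t}+e^{-i\omega t})$ and the triangle inequality, then invokes the monotonicity of $\sinh(\xi x)/\xi$ in $\xi$ at the end, whereas you reach the same bound by splitting into real and imaginary parts and apply $|\beta|\le C$ before integrating.
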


\begin{proof}
Consider
\begin{align*}
\left\vert c(\omega ,x;h)-c_{N}(\omega ,x)\right\vert  &\leq \varepsilon
\int_{0}^{x}\left\vert \cos \omega t\,\right\vert dt=\frac \varepsilon 2
\int_{0}^{x}\left\vert e^{i\operatorname{Re}\omega\cdot t}e^{-\operatorname{Im}\omega\cdot t}+e^{-i\operatorname{Re}\omega\cdot t}e^{\operatorname{Im}\omega\cdot t} \right\vert dt \\
&\leq \frac \varepsilon 2 \int_{0}^{x}\left( e^{\operatorname{Im}\omega\cdot t}
+ e^{-\operatorname{Im}\omega\cdot t} \right) dt =
\varepsilon \int_{0}^{x}\cosh \left( |\operatorname{Im}\omega| \cdot t\right) dt =\frac{\varepsilon\sinh\left(|\operatorname{Im}\omega|\cdot x\right)}{|\operatorname{Im}\omega|}.
\end{align*}
Since the function $\sinh (\xi x)/\xi $ is monotonically increasing with
respect to both variables when $\xi ,x\geq 0$, we obtain the required
inequality (\ref{estimate c}).
\end{proof}

A similar statement is true for the solution $s(\omega ,x;\infty )$.

\begin{proposition}
Let the parameter $\omega $ belong to the strip $\left\vert \operatorname{Im}\omega
\right\vert \leq C$ where $C$ is a positive number and $\left\vert \omega
\right\vert >1$. Suppose that $\max_{\overline{S}}\left\vert
S_{f}(x,t)-S_N(x,t)\right\vert \leq \varepsilon $. Then
\begin{equation}
\left\vert s(\omega ,x;\infty )-s_{N}(\omega ,x)\right\vert \leq
\varepsilon \sinh (Cx)/C.  \label{estimate s}
\end{equation}%
If $\left\vert \omega \right\vert \leq 1$, then \[
\left\vert s(\omega,x;\infty)-s_{N}(\omega,x)\right\vert \leq\varepsilon
c_{b}x
\]
where the constant $c_{b}$ depends only on $b$. For any $\omega\neq0$ the
following estimate holds
\begin{equation}
\left\vert s(\omega,x;\infty)-s_{N}(\omega,x)\right\vert \leq\frac
{\varepsilon}{\left\vert \omega\right\vert }\sinh(Cx)/C.
\label{estimate s/omega}%
\end{equation}
\end{proposition}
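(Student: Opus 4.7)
The plan is to subtract the integral representations (\ref{s sin}) and (\ref{sN}), which share the same leading term $\sin\omega x/\omega$. This reduces the problem to estimating
\[
\left|s(\omega,x;\infty)-s_N(\omega,x)\right|\leq \frac{1}{|\omega|}\int_0^x\bigl|S_f(x,t)-S_N(x,t)\bigr|\,|\sin\omega t|\,dt\leq \frac{\varepsilon}{|\omega|}\int_0^x |\sin\omega t|\,dt,
\]
so everything hinges on controlling $|\sin\omega t|$ (resp.\ $|\sin\omega t|/|\omega|$) for complex $\omega$ in the strip.

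The key computational step is the identity $|\sin\omega t|^{2}=\sin^{2}(\operatorname{Re}\omega\cdot t)+\sinh^{2}(\operatorname{Im}\omega\cdot t)$, obtained by writing $\omega=\alpha+i\beta$ and expanding $\sin\omega t=\sin(\alpha t)\cosh(\beta t)+i\cos(\alpha t)\sinh(\beta t)$. Combining with $\sin^{2}(\alpha t)\le 1=\cosh^{2}(\beta t)-\sinh^{2}(\beta t)$ yields the clean envelope
\[
|\sin\omega t|\leq \cosh(|\operatorname{Im}\omega|\,t).
\]
Integrating and invoking the monotonicity of $\xi\mapsto \sinh(\xi x)/\xi$ for $\xi\geq 0$ (with value $x$ at $\xi=0$), together with $|\operatorname{Im}\omega|\leq C$, I obtain
\[
\frac{1}{|\omega|}\int_{0}^{x}|\sin\omega t|\,dt\leq \frac{\sinh(|\operatorname{Im}\omega|\,x)}{|\omega|\,|\operatorname{Im}\omega|}\leq \frac{\sinh(Cx)}{|\omega|\,C}.
\]
This immediately proves the general inequality (\ref{estimate s/omega}); and when $|\omega|>1$, the extra factor $1/|\omega|\leq 1$ gives (\ref{estimate s}).

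The case $|\omega|\leq 1$ requires a separate argument since the factor $1/|\omega|$ can no longer be absorbed. The idea is to bound $\sin(\omega t)/\omega$ directly via its Taylor series, giving $|\sin(\omega t)/\omega|\leq \sinh(|\omega|\,t)/|\omega|$; the monotonicity noted above, together with $|\omega|\leq 1$, yields $|\sin(\omega t)/\omega|\leq \sinh t$. Integration produces $\int_{0}^{x}\sinh t\,dt=\cosh x-1$, and since the map $x\mapsto(\cosh x-1)/x$ is monotonically increasing on $[0,b]$, one has $\cosh x-1\leq c_{b}\,x$ with $c_{b}=(\cosh b-1)/b$, a constant depending only on $b$. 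The only subtlety worth double-checking is that the passage from the pointwise bound on $S_{f}-S_N$ to the integral bound indeed involves only $|\sin\omega t|/|\omega|$; everything else is a routine application of monotonicity of elementary hyperbolic functions.
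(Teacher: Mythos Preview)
Your argument is correct and follows the same overall structure as the paper's proof: subtract the integral representations, reduce to estimating $\int_0^x|\sin\omega t|\,dt$, bound $|\sin\omega t|\le\cosh(|\operatorname{Im}\omega|\,t)$, integrate, and invoke the monotonicity of $\xi\mapsto\sinh(\xi x)/\xi$.

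The only noteworthy difference is in the treatment of the case $|\omega|\le 1$. The paper argues via the maximum principle: since $\sin(\omega t)/\omega$ is entire in $\omega$, its maximum over $|\omega|\le 1$ is attained on $|\omega|=1$, and then $c(t):=\max_{|\omega|=1}|\sin(\omega t)|=\max_{|z|=t}|\sin z|$ is nondecreasing in $t$ (again by the maximum principle), so $c(t)\le c(b)=:c_b$. Your route is more elementary: bounding the Taylor series termwise gives $|\sin(\omega t)/\omega|\le\sinh(|\omega|t)/|\omega|\le\sinh t$, and then $\int_0^x\sinh t\,dt=\cosh x-1\le c_b\,x$ with the explicit constant $c_b=(\cosh b-1)/b$. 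Both are valid; your version avoids invoking analyticity and yields a concrete value for $c_b$, while the paper's argument is shorter once the maximum principle is taken for granted.
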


\begin{proof}
Let $\left\vert \operatorname{Im}\omega \right\vert \leq C$ and $\left\vert \omega
\right\vert >1$. Then
\begin{equation*}
\left\vert s(\omega ,x;\infty )-s_{N}(\omega ,x)\right\vert  \leq
\left\vert \omega \right\vert \left\vert s(\omega ,x;\infty )-s_{N}(\omega
,x)\right\vert  \leq \varepsilon \int_{0}^{x}\left\vert \sin \omega t\,\right\vert dt.
\end{equation*}%
Now following the reasoning from the proof of the preceding proposition we
obtain (\ref{estimate s}).

Considering the case $\left\vert \omega \right\vert \leq 1$ we observe that
the function $\sin \left( \omega t\right) /\omega $ is analytic with respect
to $\omega $ and hence $\max_{\left\vert \omega \right\vert \leq
1}\left\vert \sin \left( \omega t\right) /\omega \right\vert
=\max_{\left\vert \omega \right\vert =1}\left\vert \sin \left( \omega
t\right) /\omega \right\vert =\max_{\left\vert \omega \right\vert
=1}\left\vert \sin \left( \omega t\right) \right\vert $. Denote this number
by $c(t)$. Again, due to the maximum principle we obtain $c(t)\leq
c(b)=:c_{b}$. Thus, $\left\vert s(\omega ,x;\infty )-s_{N}(\omega
,x)\right\vert \leq \varepsilon \int_{0}^{x}\left\vert \frac{\sin \omega t}{%
\omega }\,\right\vert dt\leq \varepsilon c_{b}x$. The estimate (\ref{estimate s/omega}) is proved in a complete analogy with (\ref{estimate c}).
\end{proof}

In order to be able to consider problems for equation (\ref{SLlambda}) with
boundary conditions involving the derivative of the solution we need to
obtain a convenient representation for it as well. Let $u_{N}$ be an
approximation of a solution $u$ of (\ref{SLlambda}), defined by the formula%
\begin{equation}
u_{N}(x)=\widetilde{u}(x)+\int_{-x}^{x}K_{f,N}(x,t)\widetilde{u}(t)dt
\label{uN}
\end{equation}%
where $\widetilde{u}$ is a linear combination of the functions $\sin (\omega
x)/\omega $ and $\cos (\omega x)$, and $K_{f,N}$ has the form (\ref{K(x,t)}%
). The corresponding exact solution has the form
\begin{equation*}
u(x)=\widetilde{u}(x)+\int_{-x}^{x}\mathbf{K}_{f}(x,t)\widetilde{u}(t)dt.
\end{equation*}%
From (\ref{uN}) it is quite easy to obtain a corresponding expression for $%
u_{N}^{\prime }$ observing that the differentiation of (\ref{K(x,t)}) where
the functions $u_{k}(x,t)$ are defined by (\ref{um}) does not present any
difficulty. Nevertheless one still has to prove a convenient estimate for
the difference $\left\vert u^{\prime }-u_{N}^{\prime }\right\vert $ which is
an additional task. To make it easier we choose here another way which
involves the transmutation operator for the Darboux associated Schr\"{o}%
dinger equation which in our notations is the operator $T_{1/f}$. In \cite%
{KrT2012} this operator was studied in detail and two explicit formulae for
the corresponding integral kernel $\mathbf{K}_{1/f}(x,t)$ in terms of $%
\mathbf{K}_{f}(x,t)$ were obtained (a third formula was presented in \cite%
{KT Transmut}).

Consider the function
\begin{equation}
v:=f\left( \frac{u}{f}\right) ^{\prime }=u^{\prime }-\frac{f^{\prime }}{f}u
\label{v}
\end{equation}%
which is a result of the Darboux transformation applied to $u$ (more on the
Darboux transformation see, e.g., \cite{Matveev} and \cite{Rosu}). The
function $v$ is a solution of the equation
\begin{equation*}
v^{\prime \prime }-q_{D}(x)v=\lambda v
\end{equation*}%
with $q_{D}=-q+2\left( f^{\prime }/f\right) ^{2}$. As was shown in \cite%
{KrT2012}, $v$ can be represented in the form
\begin{equation}
v=T_{1/f}\widetilde{v}  \label{vT1/f}
\end{equation}%
where $\widetilde{v}$ is a solution of the equation $\widetilde{v}^{\prime
\prime }=\lambda \widetilde{v}$, that is, $\widetilde{v}$ is a linear
combination of the functions $\sin (\omega x)/\omega $ and $\cos (\omega x)$
with $\omega ^{2}=-\lambda $,
\begin{equation*}
\widetilde{v}=\alpha \cos (\omega x)+\beta \sin (\omega x)/\omega .
\end{equation*}%
Assuming that
\begin{equation}
u(x)=ac(\omega ,x;h)+bs(\omega ,x;\infty )  \label{sol u}
\end{equation}
or, what is the same $\widetilde{u}=a\cos (\omega x)+b\sin (\omega x)/\omega
$, let us find the coefficients $\alpha $ and $\beta $ in terms of $a$ and $%
b $. From (\ref{v}) we have
\begin{equation}
v(x)=a\left( c^{\prime }(\omega ,x;h)-\frac{f^{\prime }}{f}c(\omega
,x;h)\right) +b\left( s^{\prime }(\omega ,x;\infty )-\frac{f^{\prime }}{f}%
s(\omega ,x;\infty )\right)  \label{v_ab}
\end{equation}%
meanwhile from (\ref{vT1/f}) we obtain
\begin{equation*}
v(x)=T_{1/f}\left[ \alpha \cos (\omega x)+\frac{\beta }{\omega }\sin (\omega
x)\right] .
\end{equation*}%
The last relation can obviously be written as follows
\begin{equation}
v(x)=T_{1/f}\left[ \left( \frac{\alpha }{\omega }\sin (\omega x)-\frac{\beta
}{\omega ^{2}}\cos (\omega x)\right)'\right] .  \label{vT1/fd}
\end{equation}%
In \cite{KrT2012} the following useful operator equality was obtained
\begin{equation*}
\frac{1}{f}T_{1/f}\frac{d}{dx}=\frac{d}{dx}\frac{1}{f}T_{f}
\end{equation*}%
which is true on $C^{1}[-b,b]$. Applying it to (\ref{vT1/fd}) we obtain%
\begin{eqnarray*}
v(x) &=&f(x)\frac{d}{dx}\left( \frac{1}{f(x)}T_{f}\left[ \frac{\alpha }{%
\omega }\sin (\omega x)-\frac{\beta }{\omega ^{2}}\cos (\omega x)\right]
\right) \\
&=&f(x)\frac{d}{dx}\left( \frac{1}{f(x)}\left[ \alpha s(\omega ,x;\infty )-%
\frac{\beta }{\omega ^{2}}c(\omega ,x;h)\right] \right) .
\end{eqnarray*}%
Comparison of this result with (\ref{v_ab}) gives us the relations $\alpha =b$ and $\beta =-\omega ^{2}a$.
Hence
\begin{equation}
v(x)=T_{1/f}\left[ b\cos (\omega x)-a\omega \sin (\omega x)\right] .
\label{vT1}
\end{equation}%
Notice that $v(0)=b$ and $v^{\prime }(0)=-a\omega ^{2}-bh$. This follows
from the properties of the operator $T_{1/f}$ (Remark \ref{RemTh}) and
observation that the value of $\left( 1/f\right) ^{\prime }$ in the origin
is $-h$.

From (\ref{vT1}) we obtain a convenient representation for the derivative of
the solution (\ref{sol u}),%
\begin{equation}
u^{\prime }=v+\frac{f^{\prime }}{f}u=T_{1/f}\left[ b\cos (\omega x)-a\omega
\sin (\omega x)\right] +\frac{f^{\prime }}{f}T_{f}\left[ a\cos (\omega x)+%
\frac{b}{\omega }\sin (\omega x)\right] .  \label{uprime}
\end{equation}

An approximation $K_{1/f,N}$ of the kernel $\mathbf{K}_{1/f}$ of
the operator $T_{1/f}$ can be done repeating the general scheme of Theorem \ref{Th Kapprox} for the Darboux associated potential $q_D$ and the particular solution $1/f$. However it is possible to omit the solution of another approximation problem, an approximation $K_{1/f,N}$ can be taken in the form
\begin{equation}
K_{1/f,N}=-\left( b_{0}v_{0}+\sum_{n=1}^{N}\left(
a_{n}v_{2n}+b_{n}v_{2n-1}\right) \right)  \label{K1/f}
\end{equation}%
with $b_{0}=-\mathbf{K}_{1/f}(0,0)=h/2$ and the coefficients $a_{n}$, $b_{n}$%
, $n=1,\ldots ,N$ from (\ref{K(x,t)}). Here $v_{k}$ are introduced as follows%
\begin{equation*}
v_{0}=\psi _{0}(x),\quad v_{2n-1}(x,t)=\sum_{\text{even }k=0}^{n}\binom{n}{k}%
\psi _{n-k}(x)t^{k},\quad v_{2n}(x,t)=\sum_{\text{odd }k=1}^{n}\binom{n}{k}%
\psi _{n-k}(x)t^{k}.  
\end{equation*}%
The fact that a representation for $K_{1/f,N}$ is a linear combination of the terms $v_{k}$
follows from Theorem \ref{Th Kapprox} where instead of $q$ and $f$ one
should consider $q_{D}$ and $1/f$ respectively. Then the corresponding
generalized wave polynomials $u_{k}$ result to be precisely $v_{k}$.
In (\ref{K1/f}) we state additionally that taking in
the representation the coefficients  from (\ref{K(x,t)}) we obtain an
approximation of the kernel $\mathbf{K}_{1/f}$. In order to
obtain this result one needs to consider the kernels $K_{f,N}$ and $
K_{1/f,N} $ as scalar components of a single bicomplex function and take
into account that the generalized wave polynomials (see \cite{KKTT}) $u_{k}$
and $v_{k}$ are nothing but scalar components of hyperbolic pseudoanalytic
formal powers (for the corresponding details we refer to \cite{KT Transmut}). Thus, the expression (\ref{K1/f}) is in fact a metaharmonic conjugate of (\ref{K(x,t)}).
We formulate here the result stating that if $\left\Vert \mathbf{K}_{f}-K_{f,N}\right\Vert
<\varepsilon $ then necessarily there is an appropriate estimate for $%
\left\Vert \mathbf{K}_{1/f}-K_{1/f,N}\right\Vert $ where $K_{1/f,N}$ is
defined by (\ref{K1/f}) and $\left\Vert \mathbf{\cdot }\right\Vert $ is the
maximum norm. We give its proof in the
Appendix A.

\begin{theorem}
\label{Th Estimate for K1/f}Let $\max_{\overline{S}}\left\vert \mathbf{K}
_{f}-K_{f,N}\right\vert <\varepsilon $ where $K_{f,N}$ has the form \eqref{K(x,t)}. Then $\max_{\overline{S}}\left\vert \mathbf{K}_{1/f}-K_{1/f,N}
\right\vert <\varepsilon C$ where $K_{1/f,N}$ is defined by \eqref{K1/f} and
the constant $C$ depends only on $f$ and $b$.
\end{theorem}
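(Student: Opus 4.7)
The approach is to exploit the bicomplex hyperbolic-pseudoanalytic structure alluded to just above the theorem, following \cite{KT Transmut}. In that framework the pair $(\mathbf{K}_f,\mathbf{K}_{1/f})$ constitutes the two scalar components of a single bicomplex-valued function $W=\mathbf{K}_f+\mathbf{j}\mathbf{K}_{1/f}$ that solves a hyperbolic Vekua equation whose coefficients depend only on $f$, and the generalized wave polynomials come in conjugate pairs $(u_k,v_k)$ which are the formal powers of that same Vekua equation. The definition \eqref{K1/f} of $K_{1/f,N}$ re-uses precisely the coefficients $a_n$, $b_n$ from \eqref{K(x,t)} together with the base constant $b_0=h/2$, so the first step is to assemble
\begin{equation*}
W_N := K_{f,N}+\mathbf{j}\,K_{1/f,N},
\end{equation*}
observe that it is a finite bicomplex linear combination of pseudoanalytic formal powers, and is therefore itself an exact solution of the same Vekua equation as $W$, matching it at the origin.

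The key tool is the \emph{metaharmonic conjugation operator} $\mathcal{C}_f$, which, given the scalar part of a solution of that Vekua equation together with a prescribed value of its $\mathbf{j}$-component at the origin, recovers the full $\mathbf{j}$-component. Two facts about $\mathcal{C}_f$ must be established. First, by linearity and by the matching of data at $(0,0)$, both $\mathcal{C}_f(\mathbf{K}_f)=\mathbf{K}_{1/f}$ and $\mathcal{C}_f(K_{f,N})=K_{1/f,N}$ hold; the latter identity is exactly the content of the choice $b_0=h/2=-\mathbf{K}_{1/f}(0,0)$ combined with the fact that each $v_k$ is the metaharmonic conjugate of the corresponding $u_k$. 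Second, the two explicit representations of $\mathbf{K}_{1/f}$ in terms of $\mathbf{K}_f$ obtained in \cite{KrT2012} (and the third one in \cite{KT Transmut}) display $\mathcal{C}_f$ as an integral operator on $C(\overline{\mathbf{S}})$ with a kernel built from $f$, $1/f$ and $q$; in particular $\mathcal{C}_f$ is bounded in the maximum norm, with a bound $C=\|\mathcal{C}_f\|$ that depends only on $f$ and $b$.

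Granted these two ingredients, the estimate is immediate. By linearity
\begin{equation*}
\mathbf{K}_{1/f}-K_{1/f,N} \;=\; \mathcal{C}_f\bigl(\mathbf{K}_f-K_{f,N}\bigr),
\end{equation*}
so the hypothesis $\max_{\overline{\mathbf{S}}}|\mathbf{K}_f-K_{f,N}|<\varepsilon$ yields $\max_{\overline{\mathbf{S}}}|\mathbf{K}_{1/f}-K_{1/f,N}|<C\varepsilon$, with $C$ depending only on $f$ and $b$, as required.

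The hard part will be the first of the two facts about $\mathcal{C}_f$: verifying that the formula \eqref{K1/f} really is the metaharmonic conjugate of \eqref{K(x,t)}. By linearity this reduces to a term-by-term identification, using the explicit definitions \eqref{phik} and \eqref{psik} of $\varphi_k$, $\psi_k$ together with the definitions of $u_k$ and $v_k$, that each pair $(u_{2n-1},-v_{2n-1})$ and $(u_{2n},-v_{2n})$ (with the signs appearing in \eqref{K1/f}) forms a bicomplex formal power of the relevant Vekua equation, and that the summand $-b_0 v_0$ supplies the correct additive constant $-\mathbf{K}_{1/f}(0,0)$. Once this combinatorial identification is carried out, the quantitative part collapses to the boundedness of an explicit Volterra-type integral operator on $C(\overline{\mathbf{S}})$, which is a routine uniform estimate.
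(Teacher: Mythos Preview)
Your proposal shares the paper's essential idea—packaging $(\mathbf{K}_f,\mathbf{K}_{1/f})$ and $(K_{f,N},K_{1/f,N})$ as scalar parts of bicomplex Vekua solutions built from formal powers—but diverges at the crucial quantitative step, and there is a gap there.

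You try to obtain the bound by invoking a single ``metaharmonic conjugation operator'' $\mathcal{C}_f$, asserting that the explicit formulas of \cite{KrT2012} exhibit it as a bounded integral operator on $C(\overline{\mathbf S})$ and that it sends $K_{f,N}$ to $K_{1/f,N}$. These are two different claims about two a~priori different objects: the formulas in \cite{KrT2012} are derived using the specific Goursat data of the transmutation kernels, so even if they define some bounded integral operator on $C(\overline{\mathbf S})$, there is no reason it should coincide with the abstract Vekua conjugate when applied to the approximate kernel $K_{f,N}$ (whose Goursat data differ from those of $\mathbf K_f$). Conversely, if $\mathcal{C}_f$ is defined as the abstract Vekua conjugate (which is what your term-by-term formal-power identification actually checks), its boundedness on the max norm does not follow from those formulas. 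You are silently identifying two operators without verifying they agree.

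The paper's proof sidesteps this entirely. Instead of a conjugation operator, it uses the bicomplex transmutation $\mathbf{V}_1=T_f\mathcal{R}+\mathbf{j}T_{1/f}\mathcal{I}$ from \cite{KT Transmut} to pull $W=\mathbf K_f-\mathbf j\mathbf K_{1/f}$ and $W_N=K_{f,N}-\mathbf jK_{1/f,N}$ back to the elementary equation $\partial_{\bar z}\widetilde W=0$; there $\widetilde W_N=\sum\alpha_n z^n$ is a genuine hyperbolic polynomial. The difference $\widetilde W-\widetilde W_N$ is then controlled by its Goursat data via the d'Alembert structure, and those data are just $T_f^{-1}$ applied to the characteristic values of $\mathbf K_f-K_{f,N}$, hence bounded by $\varepsilon\|T_f^{-1}\|$. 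Pushing forward by the bounded operator $\mathbf V_1$ gives $C=\|T_f^{-1}\|\cdot\|\mathbf V_1\|$. This factorization through $\mathbf V_1$ is what makes both the combinatorial identification and the boundedness simultaneous and self-contained; your proposal separates them and then leans on an unverified compatibility between the two.
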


The estimates for the kernels $\mathbf{K}_{f}$ and $\mathbf{K}_{1/f}$ imply
corresponding estimates for the kernels $S_{f}$, $C_{f}$ and $S_{1/f}$,
$C_{1/f}$.

Theorem \ref{Th Estimate for K1/f} together with the equality (\ref{uprime})
suggests to approximate the derivatives of the solutions $c^{\prime}%
(\omega,x;h)$ and $s^{\prime}(\omega,x;\infty)$ by the functions
\begin{equation*}
\overset{\circ}{c}_{N}(\omega,x)  :=-\omega T_{1/f,N}\left[\sin\omega
x\right]+\frac{f^{\prime}}{f}T_{f,N}\left[\cos\omega x\right]  =-\omega^{2}s_{1/f,N}(\omega,x)+\frac{f^{\prime}}{f}c_{f,N}(\omega,x)
\end{equation*}
and
\begin{equation*}
\overset{\circ}{s}_{N}(\omega,x)  :=T_{1/f,N}\left[\cos\omega x\right]+\frac
{f^{\prime}}{f}T_{f,N}\left[\frac{\sin\omega x}{\omega}\right]  =c_{1/f,N}(\omega,x)+\frac{f^{\prime}}{f}s_{f,N}(\omega,x),
\end{equation*}
respectively. Notice that $c_{1/f,N}(\omega,x)$ is an approximation of
$c_{1/f}(\omega,x;-h)$.

Let us emphasize that $\overset{\circ}{c}_{N}(\omega,x)$ and $\overset{\circ
}{s}_{N}(\omega,x)$ do not coincide in general with the derivatives of
$\overset{}{c}_{N}(\omega,x)$ and $\overset{}{s}_{N}(\omega,x)$.

Approximation of the transmutation kernels corresponding to $f$ and $1/f$
imply approximations for the solutions $c(\omega,x;h)$, $s(\omega,x;\infty)$,
$c_{1/f}(\omega,x;-h)$ and $s_{1/f}(\omega,x;\infty)$. From the corresponding
estimates it is easy to obtain estimates for the approximations of $c^{\prime
}(\omega,x;h)$ and $s^{\prime}(\omega,x;\infty)$ by $\overset{\circ}{c}%
_{N}(\omega,x)$ and $\overset{\circ}{s}_{N}(\omega,x)$.

We note that due to (\ref{K1/f}), $\overset{\circ}{c}_{N}(\omega,x)$ and
$\overset{\circ}{s}_{N}(\omega,x)$ can be written in the following form
\begin{align*}
\overset{\circ}{c}_{N}(\omega,x)  & =-\omega\sin\omega x+2\omega\sum_{n=1}%
^{N}a_{n}\sum_{\text{odd }k=1}^{n}\binom{n}{k}\psi_{n-k}(x)\int_{0}^{x}%
t^{k}\sin\omega t\,dt\\
& \qquad +\frac{f^{\prime}}{f}\left(  \cos\omega x+2\sum_{n=0}^{N}a_{n}%
\sum_{\text{even }k=0}^{n}\binom{n}{k}\varphi_{n-k}(x)\int_{0}^{x}t^{k}%
\cos\omega t\,dt\right)
\end{align*}
and
\begin{align*}
\overset{\circ}{s}_{N}(\omega,x)  & =\cos\omega x-2\sum_{n=0}^{N}b_{n}%
\sum_{\text{even }k=0}^{n}\binom{n}{k}\psi_{n-k}(x)\int_{0}^{x}t^{k}\cos\omega
t\,dt\\
& \qquad +\frac{f^{\prime}}{\omega f}\left(  \sin\omega x+2\sum_{n=1}^{N}b_{n}%
\sum_{\text{odd }k=1}^{n}\binom{n}{k}\varphi_{n-k}(x)\int_{0}^{x}t^{k}%
\sin\omega t\,dt\right)
\end{align*}
where $b_{0}=h/2$.

\section{Numerical results}\label{Section7}
\subsection{General scheme and implementation details}\label{SubsectAlgorithm}
Consider a Sturm-Liouville equation
\begin{equation}\label{SLMain}
    -y''+q(x)y=\lambda y
\end{equation}
on a segment $[0,b]$ and a corresponding initial value problem
\begin{equation}\label{SLIC}
    y(0)=y_0\qquad\text{and}\qquad y'(0)=y_1
\end{equation}
or a spectral problem
\begin{align}
    \alpha_0 y(0)+\beta_0 y'(0) &= 0,\label{SLBC0}\\
    \alpha_b y(b)+\beta_b y'(b) &= 0,\label{SLBCb}
\end{align}
where we allow for the coefficients $\alpha_0$, $\beta_0$, $\alpha_b$ and $\beta_b$ to be not only constants but also entire functions of the square root $\omega$ of the spectral parameter $\lambda$ satisfying $|\alpha_0|+|\beta_0|\ne 0$ and $|\alpha_b|+|\beta_b|\ne 0$ (for every $\lambda$).

Based on the results of the previous sections we can formulate the following algorithm for solving initial value and spectral problems \eqref{SLIC} and \eqref{SLBC0}--\eqref{SLBCb} for equation \eqref{SLMain}.
\begin{enumerate}
\item Find a non-vanishing on $[0,b]$ solution $f$ of the equation
\[
-f''+q(x)f=0.
\]
Let $f$ be normalized as $f(0)=1$ and define $h:=f'(0)$.
\item Compute the functions $\varphi_k$ and $\psi_k$, $k=0,\ldots,N$ using \eqref{phik} and \eqref{psik}.
\item Compute the functions $\mathbf{c}_k$ and $\mathbf{s}_k$, $k=0,\ldots,N$ using \eqref{cm} and \eqref{sm}.
\item Find coefficients $a_0,a_1,\ldots,a_N$ and $b_1,\ldots,b_N$ of an approximation of the functions $\frac h2+\frac 14 \int_0^x q(s)\,ds$ and $\frac 14 \int_0^x q(s)\,ds$ by linear combinations $\sum_{n=0}^N a_n\mathbf{c}_n(x)$ and $\sum_{n=1}^N b_n\mathbf{s}_n(x)$ as in Theorem \ref{Th Kapprox}. Set $b_0=a_0$. Note that also one can take $a_0=\frac h2$ and approximate the function $\frac h2(1-f(x))+\frac 14 \int_0^x q(s)\,ds$ by a linear combination $\sum_{n=1}^N a_n\mathbf{c}_n(x)$ in order to find coefficients $a_1,\ldots,a_N$.
\item Calculate the approximations $s_N(\omega,x)$ and $c_N(\omega,x)$ of the solutions $s(\omega,x;\infty)$ and $c(\omega,x;h)$ by \eqref{sN} and \eqref{cN}. If necessary, calculate the approximations of the derivatives of the solutions using \eqref{uprime} and \eqref{K1/f}. Recall that the expressions $T_{1/f} \cos \omega t$ and $T_{1/f} \frac{\sin \omega t}\omega$ can be computed similarly to \eqref{cN} and \eqref{sN} using the coefficients $\tilde a_n:=-b_n$ and $\tilde b_n:=-a_n$ and the functions $\psi_n$ instead of the functions $\varphi_n$, c.f., \eqref{K(x,t)} and \eqref{K1/f}.
\item According to \eqref{ICcos} and \eqref{ICsin} the approximation of the solution of the initial problem \eqref{SLIC} has the form
    \[
    y=y_0 c_N(\omega,x)+(y_1-y_0h)s_N(\omega, x).
    \]
    The eigenvalues of the problem \eqref{SLBC0}--\eqref{SLBCb} coincide with the squares of the zeros of the entire function
    \begin{equation}\label{SLCharEq}
        \Phi(\omega) := \alpha_b\bigl(\beta_0 c(\omega, b;h)-(\alpha_0+\beta_0 h)s(\omega,b;\infty)\bigr)+\beta_b\bigl(\beta_0 c'(\omega,b;h)-(\alpha_0+\beta_0 h)s'(\omega,b;\infty)\bigr)
    \end{equation}
    and are approximated by squares of zeros of the function
    \begin{equation}\label{SLCharEqApprox}
        \Phi_N(\omega) := \alpha_b\bigl(\beta_0 c_N(\omega, b)-(\alpha_0+\beta_0 h)s_N(\omega,b)\bigr)+\beta_b\bigl(\beta_0 c_N'(\omega,b)-(\alpha_0+\beta_0 h)s_N'(\omega,b)\bigr).
    \end{equation}
    Note that despite the division by $\omega$ in \eqref{Int_tsin} and \eqref{Int_tcos} the singularity at zero of the function $\Phi_N(\omega)$ is removable and $\Phi_N(\omega)$ can be considered as an entire function.
\item The eigenfunction $y_\lambda$ corresponding to the eigenvalue $\lambda=\omega^2$ can be taken in the form
    \begin{equation}\label{SLEigenfunction}
        y_\lambda=\beta_0 c(\omega,x;h)-(\alpha_0+\beta_0 h)s(\omega,x;\infty).
    \end{equation}
    Hence once the eigenvalues are calculated the computation of the corresponding eigenfunctions can be done using formulas \eqref{sN} and \eqref{cN}.
\end{enumerate}

The results of the previous section allow us to prove the uniform error bound for all approximate zeros of the characteristic function (at least when the coefficients in the boundary conditions \eqref{SLBC0} and \eqref{SLBCb} are independent of the spectral parameter) obtained by the proposed algorithm and that neither spurious zeros appear nor zeros missed whenever inequalities \eqref{KxxErr} and \eqref{KxmxErr} are satisfied with sufficiently small $\varepsilon_1$ and $\varepsilon_2$. For not going into too much detail in the present paper we consider only the case of Dirichlet boundary conditions, i.e., when the characteristic equation reduces to $s(\omega, b;\infty)=0$. We also refer the reader to \cite{HrynivMykytyuk2009} where similar questions are discussed.

\begin{proposition}\label{Prop Uniform Errors}
Suppose that the boundary conditions \eqref{SLBC0} and \eqref{SLBCb} are the Dirichlet boundary conditions, i.e., $\alpha_0=\alpha_b\equiv 1$, $\beta_0=\beta_b\equiv 0$. Then for every $\varepsilon>0$ there exist such $\varepsilon_{1,2}>0$ that if inequalities \eqref{KxxErr} and \eqref{KxmxErr}  are satisfied for some $N$ with these $\varepsilon_{1}$ and $\varepsilon_{2}$
respectively then all the zeros (including multiplicities) of the characteristic function of the problem \eqref{SLMain}, \eqref{SLBC0}, \eqref{SLBCb} are approximated by the (complex) zeros of the function $\Phi_N(\omega)$ with errors uniformly bounded by $\varepsilon$ and no spurious zeros appear.
\end{proposition}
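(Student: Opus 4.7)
The plan is to deduce the statement from Rouché's theorem applied simultaneously on disks around every zero of the exact characteristic function. Set $\Phi(\omega):=s(\omega,b;\infty)$ and $\Phi_N(\omega):=s_N(\omega,b)$; both are entire. By Remark \ref{Rm HalfSegment} the hypotheses \eqref{KxxErr}--\eqref{KxmxErr} on $[0,b]$ together with Theorem \ref{Th Kapprox} yield $\max_{\overline{\mathbf{S}}}|S_f-S_N|\le 3\|T_f\|\,\|T_G^{-1}\|(\varepsilon_1+\varepsilon_2)$. Combined with estimate \eqref{estimate s/omega} this provides the crucial decay
\[
    |\Phi(\omega)-\Phi_N(\omega)|\le \frac{M(\varepsilon_1+\varepsilon_2)\sinh(Cb)}{C\,|\omega|}
\]
on every fixed strip $|\operatorname{Im}\omega|\le C$ for every $\omega\ne 0$, with $M$ depending only on $\|T_f\|$ and $\|T_G^{-1}\|$. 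The $1/|\omega|$ decay is what will make the Rouché estimates uniform in $k$.

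\textbf{Location of zeros of $\Phi$.} By the asymptotic formulas of \cite[Chapter 1, Sect.~5]{Marchenko} recalled before the statement, all zeros $\{\omega_k\}$ of $\Phi$ lie in a horizontal strip $|\operatorname{Im}\omega|\le C_0$ and satisfy $\omega_k=k\pi/b+o(1)$ as $|k|\to\infty$; in particular, for $|k|$ large they are simple and pairwise separated by roughly $\pi/b$. Fix $C:=C_0+1$ as the strip on which the decay estimate will be used. Given $\varepsilon\in(0,\pi/(4b))$, choose $R>0$ so large that every zero with $|\omega_k|\ge R$ is simple, satisfies $|\omega_k-k\pi/b|<\varepsilon/4$, and that the disks $B(\omega_k,\varepsilon)$ are pairwise disjoint and contained in the strip. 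Only finitely many zeros of $\Phi$ remain in the compact region $D_R:=\{|\omega|\le R\}\cap\{|\operatorname{Im}\omega|\le C\}$.

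\textbf{Rouché argument and main obstacle.} The key ingredient — and the main obstacle — is a uniform lower bound for $|\Phi|$ outside small neighborhoods of its zeros. From \eqref{s sin}, integration by parts in $t$ gives $\omega\Phi(\omega)=\sin(\omega b)+r(\omega)$ with $r(\omega)\to 0$ uniformly on $|\operatorname{Im}\omega|\le C$ as $|\operatorname{Re}\omega|\to\infty$ (continuity of $q$ suffices for a Riemann--Lebesgue-type decay; quantitative $O(1/|\omega|)$ rates are available under mild smoothness of $q$). Consequently there exist positive constants $c_1(\varepsilon),c_2(\varepsilon)$, independent of $k$, such that $|\Phi(\omega)|\ge c_1(\varepsilon)/|\omega|$ on every circle $\partial B(\omega_k,\varepsilon)$ with $|\omega_k|\ge R$, and $|\Phi(\omega)|\ge c_2(\varepsilon)/|\omega|$ on the part of the strip lying outside $\bigcup_k B(\omega_k,\varepsilon)$ and outside $D_R$. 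Choose $\varepsilon_1,\varepsilon_2$ so small that $M(\varepsilon_1+\varepsilon_2)\sinh(Cb)/C<\min\bigl(c_1(\varepsilon),c_2(\varepsilon)\bigr)$. Rouché applied on each $B(\omega_k,\varepsilon)$ with $|\omega_k|\ge R$ then yields exactly the correct number of zeros of $\Phi_N$ (with multiplicity) inside the disk, while the complementary lower bound rules out spurious zeros in the strip outside these disks. The finitely many zeros inside $D_R$ are handled by the same Rouché scheme applied on disjoint small disks around each of them, using the positive lower bound for $|\Phi|$ away from its zeros on the compact set $D_R$ and shrinking $\varepsilon_1,\varepsilon_2$ further if needed. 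Finally, the growth $|\Phi(\omega)|\gtrsim e^{|\operatorname{Im}\omega|b}/|\omega|$ for $|\operatorname{Im}\omega|>C$ dominates the bound on $|\Phi-\Phi_N|$, so $\Phi_N$ has no zeros outside the strip. Combining the three regions yields the claim.
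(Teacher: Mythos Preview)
Your approach is essentially the paper's: Rouch\'e on disks around each zero, with the lower bound on the circles coming from the asymptotics $\omega_k=\pi k/b+o(1)$ together with the Riemann--Lebesgue decay of $\int_0^b S_f(b,t)\sin\omega t\,dt$ in the strip. Two organizational differences are worth noting. First, the paper works throughout with $\widetilde\Phi(\omega):=\omega\Phi(\omega)$ and $\omega\Phi_N(\omega)$ rather than with $\Phi$ and $\Phi_N$; this removes the $1/|\omega|$ factors so that the lower bound on all circles becomes a single positive constant $m$ and the approximation error (directly from \eqref{estimate s/omega}) becomes a uniform bound, and it automatically matches up the artificial zero at $\omega=0$. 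Second, rather than excluding spurious zeros region by region, the paper simply invokes the zero-counting statement from \cite[\S1.3]{Marchenko} that $s(\omega,b;\infty)$ and $\sin\omega b$ have the same number of zeros in large vertical strips, and remarks that the same argument applies to $s_N$.

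One small gap in your write-up: in the last sentence you compare the exponential growth of $|\Phi|$ for $|\operatorname{Im}\omega|>C$ to ``the bound on $|\Phi-\Phi_N|$'', but the only bound you established is valid on $|\operatorname{Im}\omega|\le C$. Outside the strip the same computation as in the proof of \eqref{estimate s/omega} yields $|\Phi(\omega)-\Phi_N(\omega)|\le M(\varepsilon_1+\varepsilon_2)\sinh(|\operatorname{Im}\omega|\,b)/\bigl(|\operatorname{Im}\omega|\,|\omega|\bigr)$, which is indeed dominated by $|\Phi(\omega)|\gtrsim \sinh(|\operatorname{Im}\omega|\,b)/|\omega|$ once $|\operatorname{Im}\omega|$ is large and $\varepsilon_{1,2}$ are small; you should state this bound explicitly.
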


\begin{remark}
Even in the case when the problem \eqref{SLMain}, \eqref{SLBC0}, \eqref{SLBCb} possesses a purely real spectrum we need to consider complex zeros of the function $\Phi_N(\omega)$ for Proposition \ref{Prop Uniform Errors} to hold. For zeros of multiplicity greater than one the proposition establishes that in an $\varepsilon$-neighbourhood of such zero there is a corresponding number of zeros of the approximate characteristic function $\Phi_N$.
\end{remark}

\begin{proof}
For the Dirichlet boundary conditions the characteristic function has the form $\Phi(\omega)=s(\omega,b;\infty)$. Consider the function $\widetilde \Phi(\omega):=\omega \Phi(\omega)$. It is known that $\widetilde \Phi(\omega)$ is an entire function (see, e.g., \cite[\S 1.3]{Marchenko}), has a countable set of zeros, all of finite multiplicity. Denote this set of zeros by $\Omega$.

Let $0<\varepsilon<\frac 1{2b}$ is given. Define a number
\begin{equation*}
    \widetilde\varepsilon=\min\left\{\varepsilon,\inf_{\omega_1,\omega_2\in\Omega,\ \omega_1\ne \omega_2}\frac{|\omega_1-\omega_2|}2\right\}.
\end{equation*}
Since $\Omega$ has no finite accumulation point, $\widetilde\varepsilon>0$. Note that disks of radiuses $\widetilde\varepsilon$ and centers in different zeros of the function $\widetilde\Phi$ do not have common interior points.

Now we show that
\begin{equation}\label{Eq Inf Char Eq Val}
    m:=\inf\left\{\widetilde\Phi(z): z\in\mathbb{C},\ |z-\omega|=\widetilde\varepsilon,\ \omega\in\Omega\right\}>0.
\end{equation}
Recall that
\begin{equation}\label{Eq Char Fun}
    \widetilde\Phi(\omega) = \sin\omega b+\int_0^b S_f(b,t)\sin\omega t\,dt
\end{equation}
and that the zeros (excluding 0) of $\widetilde \Phi(\omega)$ after reordering satisfy the following asymptotics \cite[Lemma 1.3.3]{Marchenko}
\begin{equation}\label{Eq Eigenvalue Asympt}
    \omega_n=\frac{\pi}b n+\frac{\alpha_n}n,\qquad \text{where } \sup |\alpha_n|<\infty.
\end{equation}
Hence for large values of $n$ the circles $\{\omega:|\omega-\omega_n|=\widetilde\varepsilon,\ \omega_n\in\Omega\}$ belong to the rings $R_n:=\{\omega: \frac{\widetilde\varepsilon}2\le|\omega-\frac{\pi}b n|\le\frac{3\widetilde\varepsilon}2\}$. Since the function $|\sin\omega b|$ is periodic with the period $\frac{\pi}b$ and does not vanish on $\{\omega: \frac{\widetilde\varepsilon}2\le |\omega|\le \frac{3\widetilde\varepsilon}2\}$, there exists
\begin{equation}\label{Eq Min Sin}
    m_1:=\min\left\{ |\sin\omega b|: \frac{\widetilde\varepsilon}2<|\omega|<\frac{3\widetilde\varepsilon}2\right\} >0.
\end{equation}
Due to \cite[Lemma 1.3.1]{Marchenko} if $|\omega|\to\infty$ with $|\operatorname{Im}\omega|$ remaining bounded then $\int_0^b S_f(b,t)\sin\omega t\,dt\to 0$. Hence we obtain from \eqref{Eq Char Fun}, \eqref{Eq Eigenvalue Asympt} and \eqref{Eq Min Sin} that, e.g., $|\widetilde \Phi(\omega)|\ge \frac{m_1}{2}$ when $|\omega-\omega_n|=\widetilde\varepsilon$
for all sufficiently large $|n|$. For all remaining values of $n$ the function $\widetilde\Phi(\omega)$ does not vanish on the circles $|\omega-\omega_n|=\widetilde\varepsilon$, which finishes the proof of the positivity of the constant $m$ in \eqref{Eq Inf Char Eq Val}.

Due to the asymptotics \eqref{Eq Eigenvalue Asympt} all zeros $\omega_n$ belong to a strip $|\operatorname{Im}\omega|\le M$. Let $\varepsilon_1$ be such that
\begin{equation*}
    \varepsilon_1\frac{\sinh ( (M+\widetilde\varepsilon)b)}{M+\widetilde\varepsilon}\le m
\end{equation*}
and for some $N$ the inequalities \eqref{KxxErr} and \eqref{KxmxErr} are satisfied with this $\varepsilon_1$. Consider the approximate solution $s_N$.
Then it follows from \eqref{estimate s/omega} that on all circles $|\omega-\omega_n|=\widetilde\varepsilon$, $\omega_n\in\Omega$ we have
\begin{equation*}
    |\widetilde \Phi(\omega)-\omega \Phi_N(\omega)|=|\omega s(\omega,b;\infty)-\omega s_N(\omega,b)|<m.
\end{equation*}
Hence by the Rouche theorem the functions $\widetilde \Phi(\omega)$ and $\omega \Phi_N(\omega)$ have the same number of zeros in the disks $|\omega-\omega_n|<\widetilde\varepsilon$, $\omega_n\in\Omega$.

The statement that no spurious zeros appear follows from the results of \cite[\S 1.3]{Marchenko} where it is shown that the functions $s(\omega,b;\infty)$ and $\sin \omega b$ possess the same number of zeros in $\{\omega: |\operatorname{Re} \omega|<2n+1/2\}$ for sufficiently large $n$, and it can be seen that this statement holds for the function $s_N$ as well.
\end{proof}

\begin{remark}
An analogues statement can be proved for all other boundary conditions of the form \eqref{SLBC0}, \eqref{SLBCb}, at least whenever they are spectral parameter independent. The scheme of the proof remains the same and should involve corresponding asymptotic relations similar to \eqref{Eq Eigenvalue Asympt} which can also be found in \cite[\S 1.3]{Marchenko}.
\end{remark}

Some remarks should be made regarding the implementation of the described algorithm.

The non-vanishing solution of equation \eqref{SLhom} can be constructed using the SPPS representation, see, e.g., \cite{KrPorter2010} for details. In the case when an exact particular solution is known we compared the obtained approximated solution against the exact one.

The accuracy and speed of the calculation of the recursive integrals play a crucial role for the accuracy and speed of the proposed algorithm. Previously we applied two different approaches for the integration. One is based on a modification of the 6 point Newton-Cottes formula \cite{CKT2013}, second uses spline approximation and integration of the obtained splines \cite{KKB}. For the first method we can easily use several millions subdivision points, meanwhile the computation time required to construct the approximating splines limits the maximal number of subdivision points for the second methods to tens of thousands. Computation based on the first approach can be highlighted as an especially recommendable option. In all numerical tests reported recently (see, e.g., \cite{CKT2013}) it delivered fast and accurate results. However for the present work for all but one example we opted for another approach. The main reason is that both methods possess the saturation property, i.e., their accuracy depends polynomially on the used step size and are not suited well enough for really high-precision calculation. For example, for the 6 point Newton-Cottes formula the final accuracy is of the order $O(h^7)$, where $h$ is the step size.

Further choice between available methods is limited by the requirement that the integrals should be calculated recursively, which leads to the following simple condition. Either the integration method should take the values of the function $g$ defined in some predefined abscissas $x_0<x_1<\ldots<x_M$ and return the values of the indefinite integral in the same set of abscissas, or the integration method should determine the set of abscissas $x_0<x_1<\ldots<x_{M'}$ analyzing the given function, and after that provide the value of the indefinite integral in an arbitrary point of interest using only the values $g(x_0),\ldots,g(x_{M'})$.

From several known methods of evaluation of indefinite integrals with high accuracy and suitable for computing the recursive integrals, e.g., Clenshaw--Curtis, Sinc and double exponential methods \cite[Section 2.13.1]{DavisRabinovich}, \cite{Haber}, \cite{MuhammadMori}, \cite{Stenger}, \cite{TakahasiMori}, \cite{TSM}, \cite{Wright}, we chose the Clenshaw--Curtis quadrature scheme based on the approximation of the integrand by a partial sum of its expansion into a series in terms of Tchebyshev polynomials and termwise integration of the approximation. The method is described in detail in \cite[Section 2.13.1]{DavisRabinovich} and has the advantage that restricting all calculation to the Tchebyshev nodes $\frac b2\left(1+\cos\frac{k\pi}M\right)$, $k=0,\ldots, M$ it reduces to the discrete cosine transform (DCT), a simple transformation of the obtained coefficients and the inverse DCT, and hence has a near-linear complexity with respect to the number $M+1$ of used abscissas. Another advantage is that the method works for an arbitrary differentiable function, analyticity is not required. It should be mentioned that the smallest computation time is achieved when $M=2^m$ and that in some cases to obtain a good accuracy of the calculated recursive integrals we used extra digits for intermediate calculations, see the following examples for the details. Another possibility is to split the interval of integration into several subintervals.

To find the coefficients $a_1,\ldots,a_n$ and $b_1,\ldots,b_n$ of the approximations from Theorem \ref{Th Kapprox} we applied the least squares method. There exist other methods providing more accurate uniform approximations, however as a rule they are slower, and in our implementation of the described algorithm in Mathematica software even the build-in function \texttt{LeastSquares} required a computation time comparable to the time of the calculation of all recursive integrals.

For the calculation of the integrals \eqref{Int_tsin} and \eqref{Int_tcos} we used the recurrent formulas \cite[4.3.119 and 4.3.123]{AbramovitzStegun}.

We do not discuss in detail the step of finding the eigenvalues. The main purpose of the numerical examples is to illustrate that the final approximation of a characteristic equation contains all the information required to evaluate accurate approximations of the eigenvalues. The problem reduces to the search of zeros of some analytic function with the only possible pole at $\omega=0$. The derivative of this function is easily obtainable and in the most complicated cases, say clusters of closely located eigenvalues as, e.g., in the Coffey-Evans problem (Example \ref{ExCE}), well-known theorems of complex analysis like the argument principle are useful, see, e.g., \cite{YingKatz}, \cite{DelnitzEtAl}. It is possible that the calculation of the closest to zero eigenvalues by the proposed method may present difficulties due to the pole of $\Phi_N$ at $\omega=0$. One possible solution is to perform a spectral shift, that is, to consider equation \eqref{SLMain} in the form $-y''+(q(x)+\lambda_\ast)y=(\lambda+\lambda_\ast)y$, where $|\lambda_\ast|$ is sufficiently separated from zero. Another solution is to use the SPPS representation \cite{KrPorter2010}. The approximation of the characteristic equation given by the SPPS representation works especially well near the origin, and all required functions are calculated on the step 3 of the described algorithm.

By the described algorithm the functions $\varphi_n$ are calculated only in $M+1$ points coinciding with the Tchebyshev nodes allowing us to compute the eigenfunctions $u_\lambda$ directly by formulas \eqref{SLEigenfunction}, \eqref{sN} and \eqref{cN} only in these $M+1$ points. If for some applications such subset of points is insufficient, the functions $\varphi_n$ can be easily interpolated to arbitrary subset of the segment $[0,b]$. One of the best ways to perform the interpolation is by using the partial sums of approximations of the functions $\varphi_n$ by their expansions into series in terms of Tchebyshev polynomials. The expansion coefficients can be obtained using the DCT, and final interpolations are obtained by summing up corresponding partial sums. Since all the functions $\varphi_n$ are computed applying similar approximation procedure, described interpolation does not deteriorate significantly the accuracy. We illustrate such approach in Example \ref{ExPaine1} where we show that even large index highly oscillating eigenfunctions can be accurately approximated.

\subsection{Sturm-Liouville spectral problems}
\begin{example}\label{ExPaine1}
Consider the
following spectral problem (the first Paine problem, \cite{Paine})
\begin{equation*}
\begin{cases}
-u''+e^x u=\lambda u, & 0\le x\le \pi,\\
u(0,\lambda)=0, & u(\pi,\lambda)=0.
\end{cases}
\end{equation*}
With the help of Mathematica software we found a non-vanishing particular solution
\begin{equation}\label{Ex1PS}
u_0(x)=I_0\big(2e^{x/2}\big)
\end{equation}
and the characteristic function
\[
\Phi(\omega)=I_{2 i \omega }(2)I_{-2 i \omega }\big(2
   \sqrt{e^\pi}\big)-I_{-2 i \omega }(2) I_{2 i
   \omega }\big(2 \sqrt{e^\pi}\big),
\]
where $\omega^2=\lambda$ and $I$ is the modified Bessel function of the first kind.

In this example we performed calculations in Matlab in machine precision and in Mathematica using high precision arithmetic. For the Matlab program we used $N=30$ for the approximations \eqref{KxxErr} and \eqref{KxmxErr} and computed all involved recursive integrals using Newton-Cottes integration formula with $M=20000$. This experiment is similar to \cite[Example 9]{KT Transmut}, however the runtime improved due to the different integration method used. The approximation errors achieved in \eqref{KxxErr} and \eqref{KxmxErr} were $5.5\cdot 10^{-11}$ and $9.3\cdot 10^{-11}$ respectively. 500 eigenvalues were calculated and the maximal absolute error of the approximated eigenvalues was $1.95\cdot 10^{-9}$. The overall time required for the calculation was 5 seconds for the approximation part (Steps 1--5 of the algorithm from Subsection \ref{SubsectAlgorithm}) and 6.5 seconds was required to finish Step 6, such time was necessary because we constructed a spline approximating the characteristic function and used Matlab  function \texttt{fnzeros} to find its zeros. A personal computer equipped with Intel i7-3770 processor was used for this and following computations.

For the second experiment we performed all the numerical calculations with 200-digit arithmetic in Mathematica. We used $M=256$ for the calculation of all involved recursive integrals. The particular solution was computed using the SPPS representation with $150$ formal powers and compared with the solution  \eqref{Ex1PS} to verify the precision of the approximate solution. The maximal difference between the approximate and the exact solutions was $3.7\cdot 10^{-187}$ showing an excellent accuracy achievable by the combination of the SPPS representation and the Clenshaw--Curtis integration procedure.

\begin{figure}[htb]
\centering
\includegraphics[
height=2.19in,
width=5.0in
]
{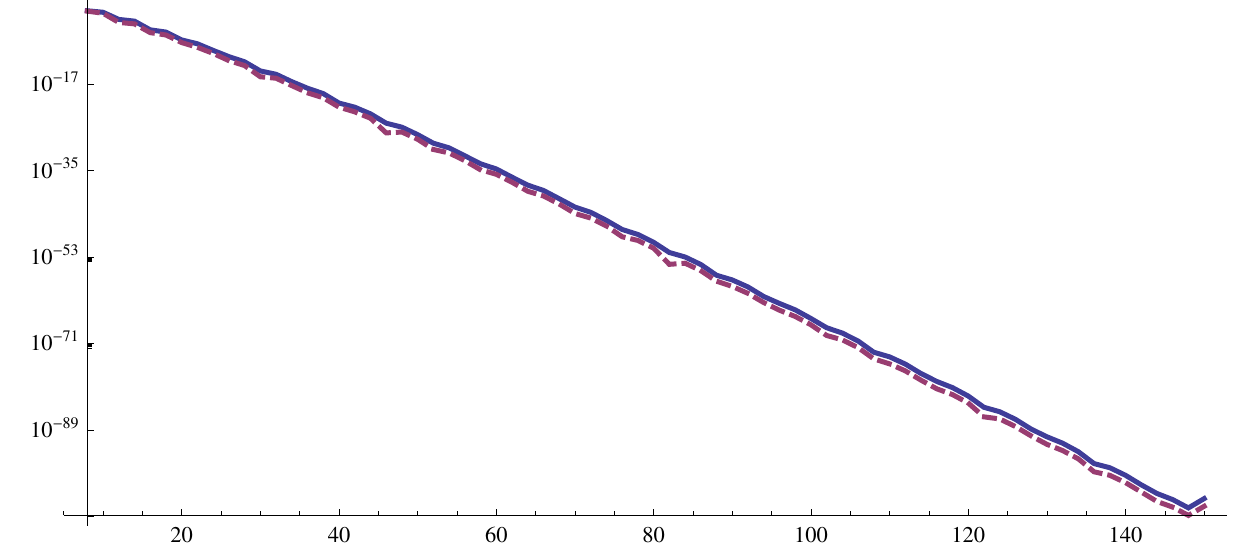}
\caption{The graphs of the maximal approximation error in \eqref{KxxErr}
and \eqref{KxmxErr} (solid line) and the maximal relative error of the first $500$ eigenvalues (dashed line) of the first Paine problem (Example \ref{ExPaine1}) as functions of $N$.}
\label{ExPaineApproxError}
\end{figure}

Using the obtained particular solution we calculated the functions $\mathbf{c}_n$ and $\mathbf{s}_n$ for $n\le 150$. After that for each $N=8,10,\ldots,150$ we found coefficients $a_0,\ldots,a_N$, $b_1,\ldots,b_N$ for \eqref{KxxErr} and \eqref{KxmxErr}, calculated first 500 eigenvalues as zeros of the approximate characteristic function and compared them to the exact ones. The function \texttt{FindRoot} from Mathematica was used to find both exact and approximate eigenvalues. On Figure \ref{ExPaineApproxError} we present both the obtained approximation errors in the inequalities \eqref{KxxErr} and \eqref{KxmxErr} and the maximal resulted relative error of the first 500 eigenvalues. We would like to point out that the error decays exponentially with respect to the number $N$ of functions used and that the resulted error of the eigenvalues is bounded by the error of the approximations on the characteristics.

\begin{figure}[htb]
\centering
\includegraphics[
height=2.05in,
width=5.0in
]
{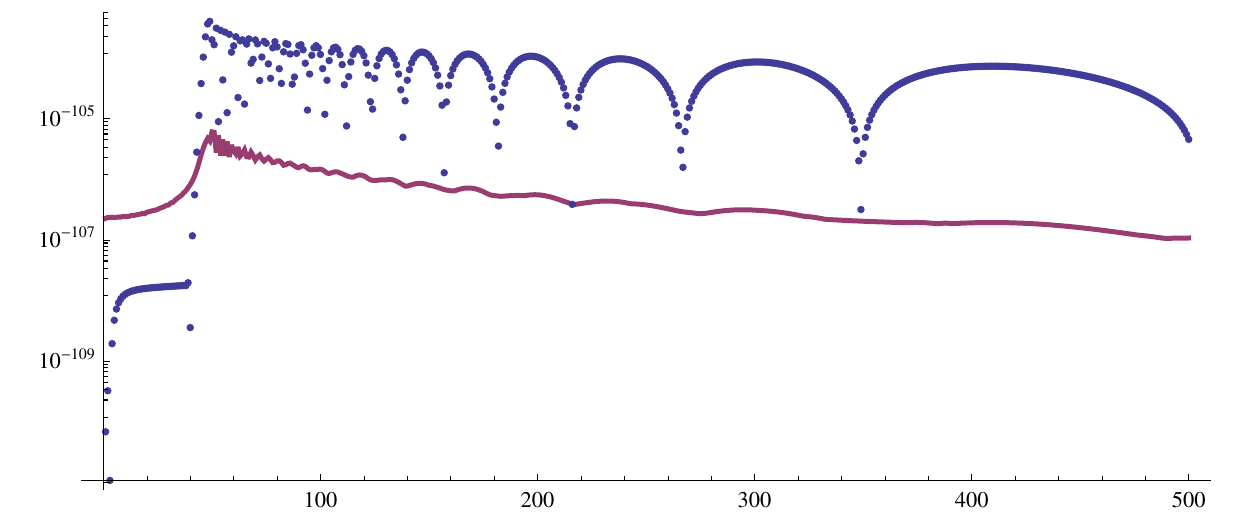}
\caption{The graph of the absolute error of the first $500$
eigenvalues (thick dots) and 500 corresponding eigenfunctions (line) of the first Paine problem (Example \ref{ExPaine1}) obtained from the described algorithm with $N=148$.}
\label{ExPaineEigError}
\end{figure}

Note that on the final step from $N=148$ to $N=150$ the approximation error increases, which can be explained by the fact that we passed a limit where the used precision and the number of points work well. Hence we used the value $N=148$ to verify the accuracy of the first 500 eigenvalues and 500 corresponding eigenfunctions. We calculated the eigenfunctions $u_\lambda$ satisfying the initial condition $u'_\lambda(0)=\sqrt{\lambda}$ on the uniform mesh of 2000 points from $[0,\pi]$ using the interpolated values of the functions $\varphi_n$ and compared them to the exact ones.
The approximation errors in \eqref{KxxErr} and \eqref{KxmxErr} were $3.9\cdot 10^{-106}$ and $6.1\cdot 10^{-106}$ respectively, while the largest error of the computed eigenvalues was $4.0\cdot 10^{-104}$ for the eigenvalue number 49. The computation time was 56 seconds for the approximation part (Steps 1--5 of the algorithm from Subsection \ref{SubsectAlgorithm}) and 213 seconds was required for build-in Mathematica function \texttt{FindRoot} to finish Step 6. On Figure \ref{ExPaineEigError} we show the absolute errors of the first 500 eigenvalues together with the distances between exact and approximate eigenfunctions in the uniform norm. Observe that even though the eigenvalues grow, the absolute errors remain essentially of the same order. The computed eigendata accuracy does not deteriorate even for eigenvalues with larger indices. For example, absolute errors of $\lambda_{1000}$, $\lambda_{2500}$ and $\lambda_{10000}$ are $1.3\cdot 10^{-105}$, $2.1\cdot 10^{-105}$ and $2.9\cdot 10^{-106}$, and errors of corresponding eigenfunctions (evaluated on a mesh of 25000 points) are $4.7\cdot 10^{-108}$, $1.3\cdot 10^{-108}$ and $1.8\cdot 10^{-109}$, respectively.
\end{example}

\begin{example}\label{ExPaine2}
Consider the
following spectral problem (the second Paine problem, \cite{Paine, Pryce})
\begin{equation*}
\begin{cases}
-u''+\frac{1}{(x+0.1)^2} u=\lambda u, & 0\le x\le \pi,\\
u(0,\lambda)=0, & u(\pi,\lambda)=0.
\end{cases}
\end{equation*}
With the help of Mathematica software we found a non-vanishing particular solution
\begin{equation}\label{Ex2PS}
u_0(x)=(1+10x)^{(1+\sqrt{5})/2}
\end{equation}
and the characteristic function
\[
\Phi(\omega)=M_{0,-\frac{\sqrt{5}}{2}}\left(\frac{\omega}{5}\right)
   W_{0,-\frac{\sqrt{5}}{2}}\left(\frac{\omega}{5}(1+10 \pi )\right)-M_{0,-\frac{\sqrt{5}}{2}}\left(\frac{\omega}{5}
   (1+10 \pi )\right)
   W_{0,-\frac{\sqrt{5}}{2}}\left(\frac{\omega}{5}\right),
\]
where $\omega^2=\lambda$, $M$ and $W$ are the Whittaker functions \cite{AbramovitzStegun}.

We calculated an approximate particular solution in Mathematica using the SPPS representation with 300 formal powers and performed integrations with $M=1024$ and 200-digits arithmetic. The larger number of points compared to Example \ref{ExPaine1} was necessary for an accurate evaluation of the recursive integrals and possibly can be explained by the fact that the accuracy of the Clenshaw--Curtis integration depends on the decay rate of coefficients of the function expansion into a series in terms of Tchebyshev polynomials, which in turn is related to the size of an ellipse on a complex plane with foci at the points $z_1=-1$ and $z_2=1$ to which the potential $q$ possesses the analytic continuation, see, e.g., \cite{RiessJohnson}, \cite{Trefethen}. The maximum error of the approximate solution compared with the exact one \eqref{Ex2PS} was less than $4\cdot 10^{-164}$.

\begin{figure}[htb]
\centering
\includegraphics[
height=2.22in,
width=5.0in
]
{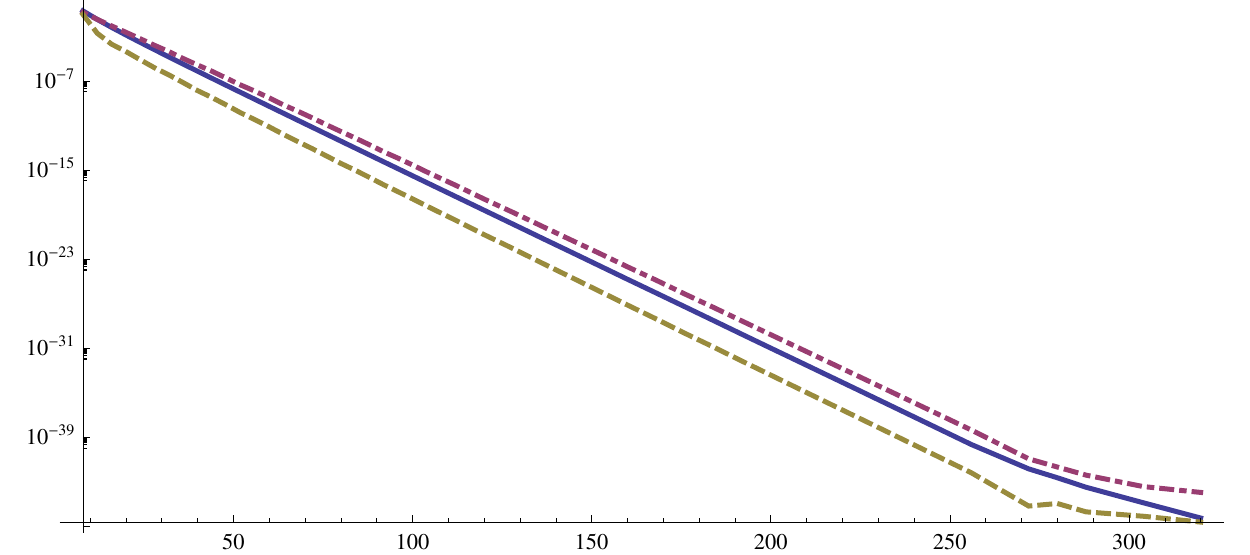}
\caption{The graphs of the maximal approximation error in \eqref{KxxErr}
and \eqref{KxmxErr} (solid line), the maximal absolute error (dot-dashed line) and the maximal relative error (dashed line) of the first $500$
eigenvalues of the second Paine problem (Example \ref{ExPaine2}) as functions of $N$.}
\label{ExPaine2ApproxError}
\end{figure}

Using the obtained particular solution we calculated the functions $\mathbf{c}_n$ and $\mathbf{s}_n$ for $n\le 320$. For the calculation we used 400-digit arithmetic. Such precision appeared to be a necessity for the build-in Mathematica function \texttt{LeastSquares} to be able to produce approximation coefficients for \eqref{KxxErr} and \eqref{KxmxErr} for large values of $N$. After that for values of $N$ in the range $[8,320]$ taken with step sizes increasing from $4$ to $16$ we found coefficients $a_0,\ldots,a_N$, $b_1,\ldots,b_N$ for \eqref{KxxErr} and \eqref{KxmxErr}. It turned out that finding  the first several eigenvalues as zeros of the approximated characteristic function $\Phi_N$ for large $N$ is not possible directly with Mathematica's function \texttt{FindRoot} (see the explanation at the end of subsection \ref{SubsectAlgorithm}), so we applied the following procedure. First we calculate roots of the polynomial obtained as a truncation of the SPPS representation of the characteristic equation, see, e.g., \cite{KrPorter2010}. These roots are known to give an excellent accuracy especially for the eigenvalues close to the origin. Then we find zeros of $\Phi_N$. To combine the two obtained sets we find the two closest values in these two sets and take the smaller ones from the roots of the SPPS polynomial and the larger ones from the zeros of $\Phi_N$. Such strategy worked well for all values of $N$. On Figure \ref{ExPaine2ApproxError} we present both the obtained approximation errors in the inequalities \eqref{KxxErr} and \eqref{KxmxErr} and the maximal resulted absolute and relative errors of the first 500 eigenvalues as functions of $N$. We would like to point out that the error decay exponentially with respect to the number $N$ of functions used and that the slopes of the graphs are close.

\begin{figure}[htb]
\centering
\includegraphics[
height=2.01in,
width=5.0in
]
{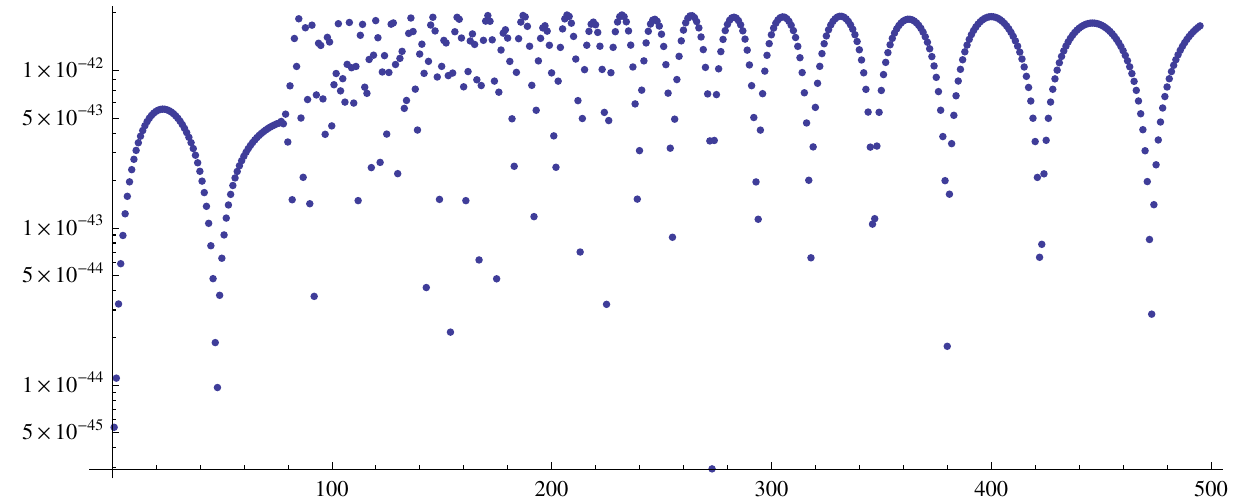}
\caption{The graph of the absolute error of the first $500$
eigenvalues of the second Paine problem (Example \ref{ExPaine2}) obtained from the described algorithm with $N=280$.}
\label{ExPaine2EigError}
\end{figure}

Note that the slope of the approximation error graph changes around $N=270$, and the absolute and relative errors decrease slower starting from this value of $N$. Again we can explain such behavior by the fact that we are close to the limit where the used precision and the number of points still work. Hence we used the value $N=280$ to present the graph of the errors of the first 500 eigenvalues. The approximation errors in \eqref{KxxErr} and \eqref{KxmxErr} were $2.5\cdot 10^{-43}$ and $2.3\cdot 10^{-43}$ respectively, while the largest error of the computed eigenvalues was $2.3\cdot 10^{-42}$ for the eigenvalue number 237. On Figure \ref{ExPaine2EigError} we show the absolute errors of the first 500 eigenvalues. Again we see that the absolute errors of the eigenvalues remain at the same level. Moreover, the accuracy of the computed eigenvalues does not deteriorate for even larger eigenvalues. For example, absolute errors of $\lambda_{1000}$, $\lambda_{2500}$ and $\lambda_{10000}$ are $1.4\cdot 10^{-42}$, $1.1\cdot 10^{-42}$ and $2.2\cdot 10^{-43}$ respectively.

While high precision arithmetic is necessary in this example to produce accurate eigenvalues, it is possible to use smaller parameters $N$ and $M$ and lower precision arithmetic if one looks for the eigenvalues accurate to 13--15 digits (i.e., with the precision expected from double-precision machine arithmetics), leading to faster runtime, comparable with other codes available. For example, we used $M=256$, $N=120$ and 128-digit arithmetic and the algorithm finished Steps 1--5 in 33 seconds and in 103 seconds found 500 eigenvalues with the largest absolute error of $2\cdot 10^{-13}$.
\end{example}

\begin{example}\label{ExCE}
Consider the Coffey-Evans problem \cite{ChildCmabers}
\begin{equation}
\label{CEeqn}%
\begin{cases}
-u''+\bigl(\beta^2\sin^2 2x-2\beta \cos 2x\bigr) u=\lambda u, & -\frac\pi 2\le x\le \frac\pi 2,\\
u\bigl(-\frac\pi 2,\lambda\bigr)=u\bigl(\frac\pi 2,\lambda\bigr)=0.
\end{cases}
\end{equation}
This problem is considered as a standard test case for numerical methods for solving Sturm-Liouville spectral problems, see, e.g., \cite{Pryce}, \cite{PruceFulton}, \cite{AliciTaceli}, \cite{KrPorter2010}, \cite{Ledoux2010}, and presents the challenge of distinguishing eigenvalues within the triple clusters which form as the parameter $\beta$ increases. The equation in \eqref{CEeqn} is a particular case of the Whittaker-Hill equation and its particular solution with the initial conditions $u(-\frac\pi 2)=1$, $u'(-\frac\pi 2)=0$ is known \cite{HemeryVeselov} and is given by
\begin{equation}\label{ExCEps}
u_0(x)=e^{\beta\cos 2x}.
\end{equation}
To our best knowledge the most accurate eigenvalues of \eqref{CEeqn} are reported in \cite{AliciTaceli}, where the table of the first 18 eigenvalues for the case $\beta=50$ correct to 24 decimal places is included. It is worth mentioning that the method used in \cite{AliciTaceli} is suitable only for a special subclass of Sturm-Liouville equations. For the numerical example we also chose $\beta=50$.

\begin{figure}[htb]
\centering
\includegraphics[
height=3.74in,
width=5.0in
]
{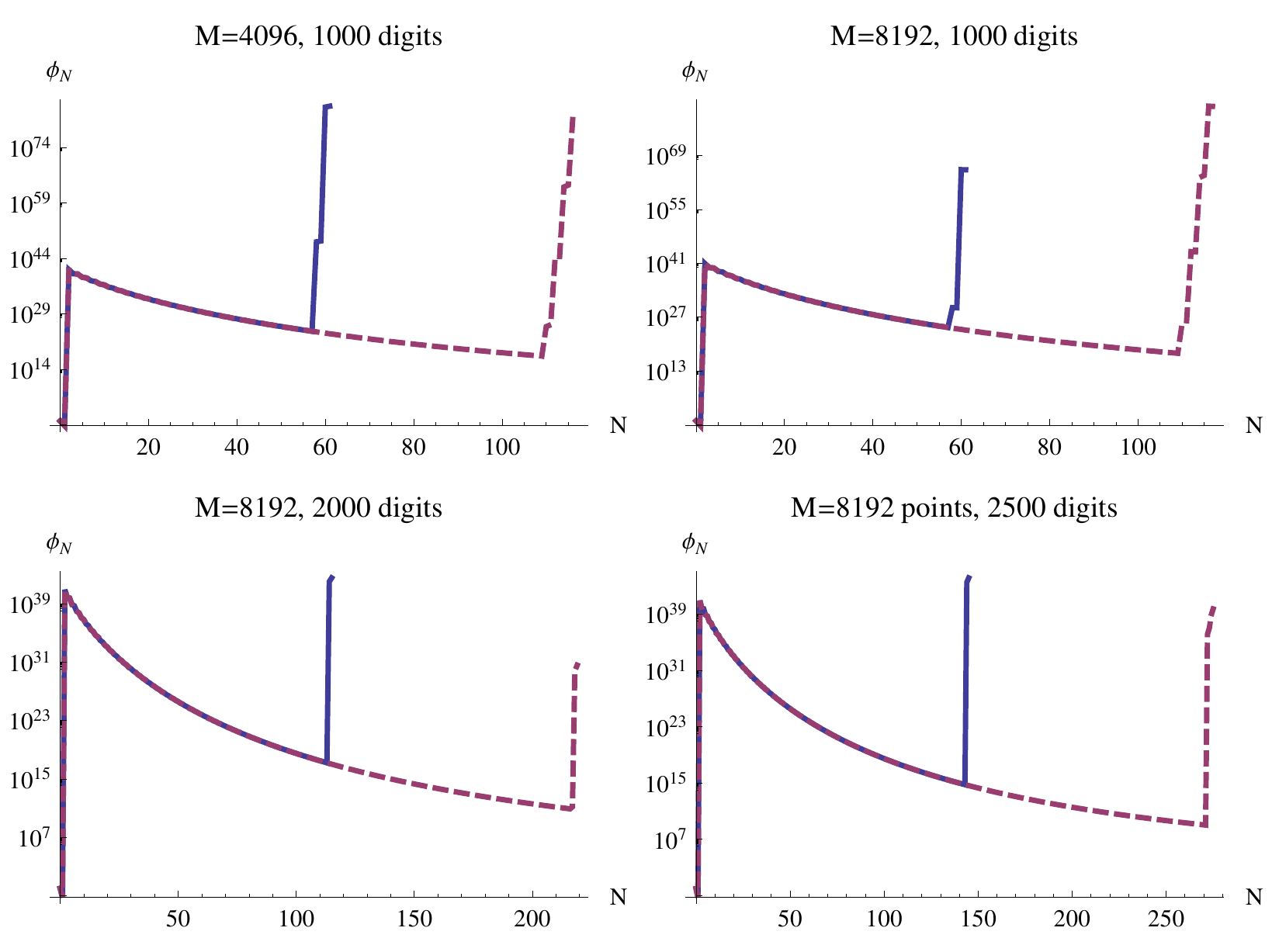}
\caption{The graphs of the absolute values of $\varphi_N(1)$ calculated with different number of points $M$ and different arithmetic precision used for computation of the recursive integrals for the Coffey-Evans problem (Example \ref{ExCE}). Solid line: functions $\varphi_N$ are calculated using \eqref{Xn}--\eqref{phik}, dashed line: functions $\varphi_N$ are calculated using formulas from \cite{KrTNewSPPS}.}
\label{ExCEphiError}
\end{figure}

An approximation of the particular solution \eqref{ExCEps} was calculated using the SPPS representation with 1200 formal powers. We used 800-digit arithmetic and $M=2048$ for the calculation of the formal powers. The resulted error of the approximate solution compared with \eqref{ExCEps} was less than $2.7\cdot 10^{-643}$. We would like to point out such remarkable accuracy. The built-in Mathematica's function \texttt{NDSolve} was not able to achieve any acceptable precision calculating the approximate solution. As it can be seen from \eqref{ExCEps} the solution $u_0$ is symmetric and satisfies $u_0(\frac\pi 2)=1$, meanwhile the best result we were able to obtain using \texttt{NDSolve} function was $u_0(\frac\pi 2)\approx 62$.

To apply the described algorithm we transformed the problem \eqref{CEeqn} to the interval $[0,1]$. We found that the Clenshaw--Curtis integration requires a lot of extra precision to evaluate the iterative integrals. The following simple test was used. It follows from the definition of the transmutation operator \eqref{Tmain} and Theorem \ref{Th Transmute} that $\frac{\varphi_k(x)}{x^k}\to 1$, $k\to\infty$. Hence, any weird behavior of the quantity $\frac{\varphi_k(x)}{x^k}$ when $k$ increases indicates serious errors in the calculated formal powers. On Figure \ref{ExCEphiError} we present the graphs of $\varphi_k(1)$ evaluated with different values of $M$ and different precision. We tried formulas \eqref{Xn}, \eqref{Xtiln} as well as recently discovered formulas from \cite{KrTNewSPPS} for the calculation of the formal powers. As one can see from the presented graphs, for the same value of the parameter $M$ and the same arithmetic precision the formulas from \cite{KrTNewSPPS} allow one to roughly double the number of calculated formal powers. Moreover, a further increase of the number of calculated formal powers can be achieved by increasing the arithmetic precision.

\begin{figure}[htb]
\centering
\includegraphics[
height=2.17in,
width=5.0in
]
{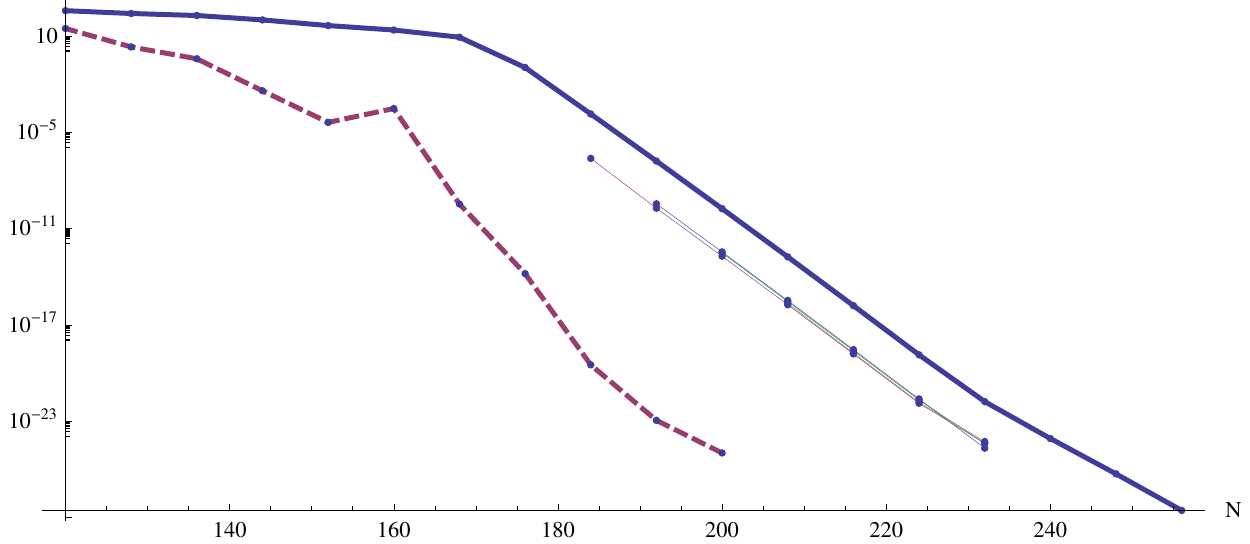}
\caption{The graphs of the maximal approximation error in \eqref{KxxErr}
and \eqref{KxmxErr} (solid thick line), the maximal absolute error of the isolated eigenvalues (dashed line) and absolute errors of the eigenvalues of the first 4 clusters (4 almost coinciding thin lines) of the Coffey-Evans problem (Example \ref{ExCE}) as functions of $N$.}
\label{ExCEEigError}
\end{figure}

For the first experiment we calculated the functions $\mathbf{c}_n$ and $\mathbf{s}_n$ for $n\le 256$ using $M=8192$ and 2500-digit arithmetic for the intermediate calculations, final values of the functions were stored at 1025 points with 400 digit precision. Our results were compared with those from \cite{AliciTaceli}. We calculated the error of the eigenvalues separately for the isolated eigenvalues and for each of the first four clusters. The obtained errors are presented on Figure \ref{ExCEEigError}. Note that the errors of the isolated eigenvalues decrease much faster than the errors of approximation in \eqref{KxxErr} and \eqref{KxmxErr}, meanwhile the slope of graph of the error for the clustered eigenvalues is an agreement with the slope of the error of approximation. Note that the lower points of the error graphs correspond to the limit of precision of the values presented in \cite{AliciTaceli}.

\begin{figure}[htb]
\centering
\includegraphics[
height=2.0in,
width=5.0in
]
{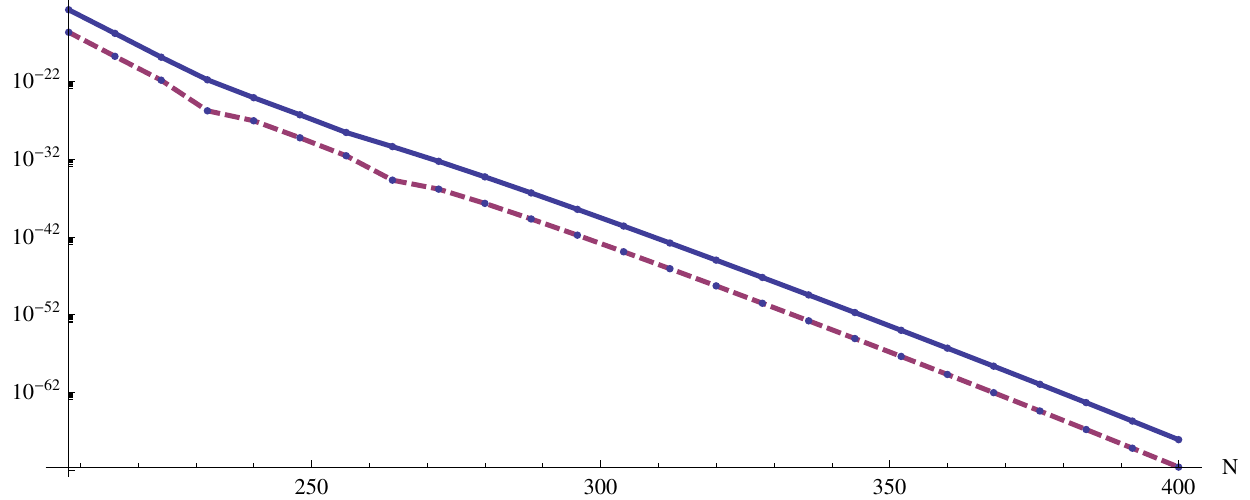}
\caption{The graphs of the maximal approximation error in \eqref{KxxErr}
and \eqref{KxmxErr} (solid line) and the absolute error of the eigenvalues of the first cluster (dashed line) of the Coffey-Evans problem (Example \ref{ExCE}) as functions of $N$.}
\label{ExCEEigError2}
\end{figure}

For the second experiment we calculated the functions $\varphi_n$, $\mathbf{c}_n$ and $\mathbf{s}_n$ for $n\le 400$ using $M=16384$ and 4000-digit arithmetic for the intermediate calculations. It is known \cite{KrTNewSPPS} that for the first eigenvalues the SPPS method achieves an especially remarkable accuracy. Hence we computed the eigenvalues of the first cluster using the SPPS representation with all functions $\varphi_n$, $n\le 400$ and used the obtained values to verify the accuracy of the proposed algorithm.
We observed an exponential decay of approximation errors in \eqref{KxxErr} and \eqref{KxmxErr} as well as of the error of the computed eigenvalues with respect to $N$. The slopes of both lines are nearly equal, see Figure \ref{ExCEEigError2}. For $N=380$ the errors of approximation were $2.12\cdot 10^{-63}$ and $2.16\cdot 10^{-63}$. In Table \ref{ExCETable1} we present the obtained eigenvalues. We are sure that they are exact for all 65 decimal places, evaluation with higher $N$ confirmed the presented digits. The eigenvalue with the index 0 is taken from the roots of the SPPS polynomial. On Figure \ref{ExCEEigenfunctions} we illustrate that our method allows one to obtain eigenfunctions as well.
\begin{table}[htb]
\centering
\small
\begin{tabular}
{cr@{}l}\hline
$n$ & \multicolumn{2}{c}{$\lambda_{n}$ (approximated)}\\\hline
0 &   4.&712 683 501 976 174 806 164 70 $\cdot 10^{-42}$\\
1 & 197.&968 726 516 507 291 450 189 104 613 631 137 680 282 238 501 965 516 499 678 457 445 33\\
2 & 391.&808 191 489 053 841 050 234 434 838 960 967 152 793 679 673 029 474 438 776 558 053 94\\
3 & 391.&808 191 489 053 841 832 241 250 450 567 879 442 934 703 990 750 980 000 552 054 926 43\\
4 & 391.&808 191 489 053 842 614 248 066 062 174 820 764 145 024 196 402 821 083 449 116 736 40\\
5 & 581.&377 109 231 579 654 864 715 898 934 768 731 234 409 061 366 366 202 824 138 138 986 73\\
6 & 766.&516 827 285 532 616 579 817 794 300 693 795 455 315 745 010 536 896 934 624 541 778 33\\
7 & 766.&516 827 285 535 505 431 430 237 728 556 528 324 964 223 414 154 644 684 143 065 548 50\\
8 & 766.&516 827 285 538 394 283 042 681 500 534 232 365 256 146 570 623 086 585 717 773 157 61\\
9 & 947.&047 491 585 860 179 592 142 658 200 615 670 883 560 237 084 089 403 375 253 394 471 82\\
10 &1122.&762 920 067 901 205 616 045 550 505 249 660 804 795 577 778 366 617 496 311 372 260 47\\
11 &1122.&762 920 071 056 526 891 891 942 465 507 589 782 179 421 839 584 709 544 874 366 683 38\\
12 &1122.&762 920 074 211 848 168 115 209 545 412 485 203 977 238 174 036 843 957 138 054 891 76\\
13 &1293.&423 567 331 707 081 413 958 872 197 134 275 865 126 916 380 700 329 000 293 361 954 74\\
14 &1458.&746 557 025 357 659 317 371 063 260 166 052 216 899 792 667 117 964 891 413 575 933 50\\
15 &1458.&746 558 472 128 708 810 534 887 553 090 428 313 351 825 225 479 463 874 997 007 515 49\\
16 &1458.&746 559 918 899 832 786 248 167 778 046 441 588 242 318 698 298 075 300 043 185 188 61\\
17 &1618.&391 008 042 643 345 932 885 816 053 496 039 220 613 799 984 685 321 316 858 406 202 35\\
18 &1771.&934 971 252 995 278 016 339 167 903 087 106 095 945 385 769 959 120 071 823 242 633 83\\
19 &1771.&935 290 604 372 265 020 106 948 865 313 215 142 762 994 141 327 857 394 637 946 848 95\\
20 &1771.&935 609 959 205 928 887 875 652 226 707 319 017 267 239 862 605 818 421 687 555 219 86\\
25 &2189.&490 124 838 400 777 638 432 025 527 998 260 500 900 985 813 188 172 572 445 106 470 28\\
50 &3928.&016 942 351 712 838 529 915 885 833 769 553 216 441 127 779 708 418 407 221 179 291 87\\
100&11470.&288 862 210 604 335 996 473 673 114 332 968 420 756 176 847 415 047 788 545 642 847 41\\
\hline
\end{tabular}
\caption{The eigenvalues of the Coffey-Evans problem (Example \ref{ExCE}) for $\beta=50$.}
\label{ExCETable1}%
\end{table}

\begin{figure}[htb]
\centering
\includegraphics[
height=1.242in,
width=5.0in
]
{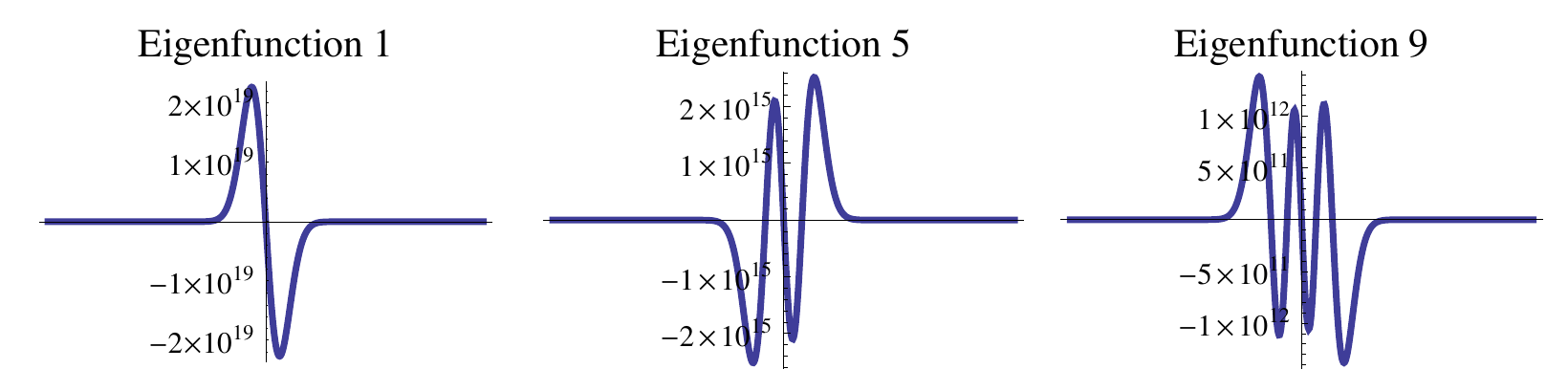}\\
\includegraphics[
height=1.242in,
width=5.0in
]
{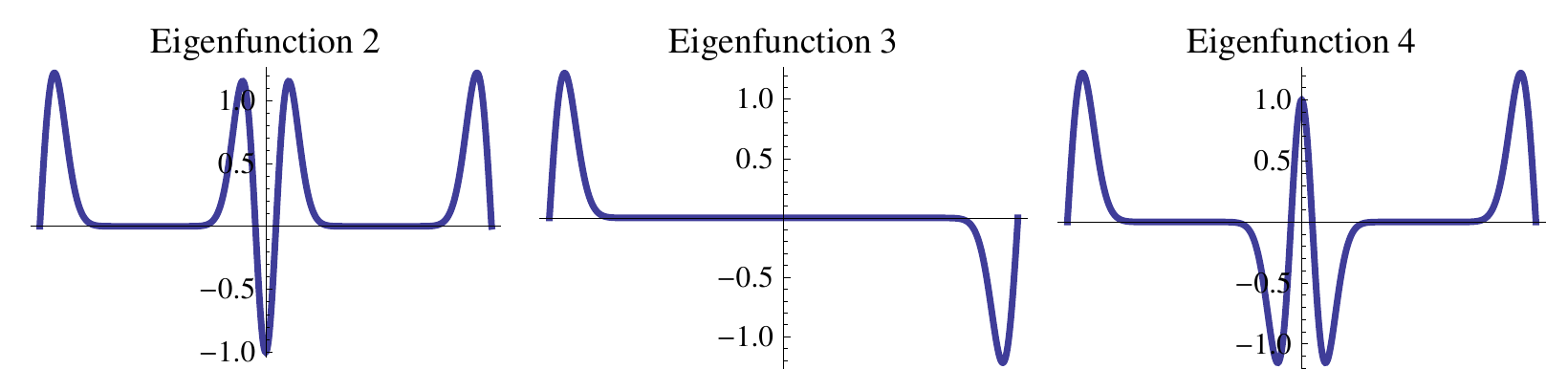}\\
\includegraphics[
height=1.242in,
width=5.0in
]
{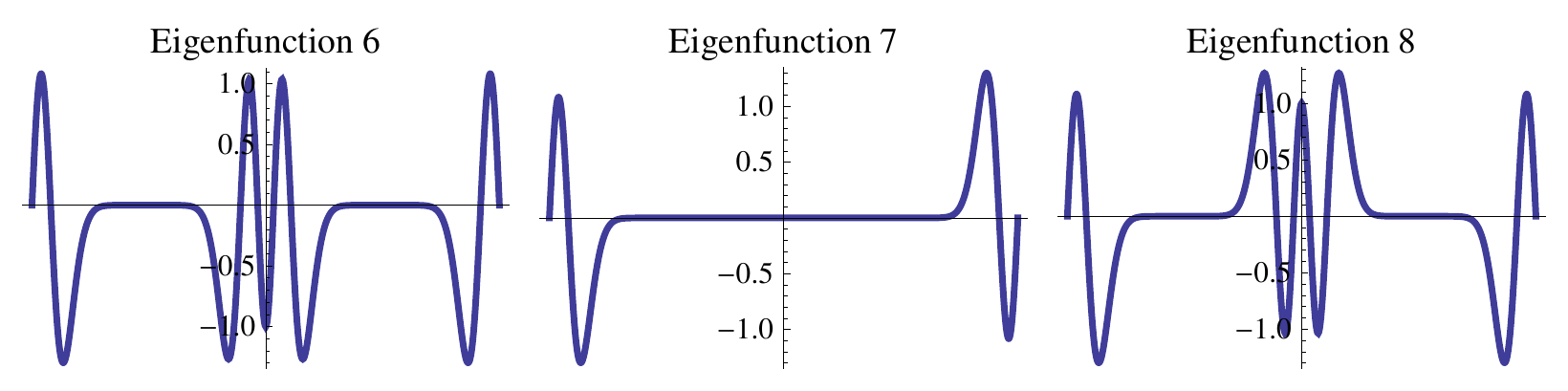}
\caption{The graphs of the eigenfunctions of the Coffey-Evans problem (Example \ref{ExCE}), on the top row corresponding to first 3 isolated eigenvalues, on the middle row corresponding to the eigenvalues from the first cluster and on the bottom row corresponding to the second cluster.}
\label{ExCEEigenfunctions}
\end{figure}
\end{example}

\subsection{Complex potential and spectral parameter dependent boundary conditions}
\begin{example}\label{ExBoumenir}
First we consider a problem with a complex potential for which the eigenvalues are known explicitly:
\begin{equation}
\label{BoumenirEqn}
\begin{cases}
-u''+(3+4i) u=\lambda u, & 0\le x\le \pi,\\
u'(0)=u'(\pi)=0.
\end{cases}
\end{equation}
A similar problem was treated in \cite{Boumenir2001}, \cite{Chanane2007}. We pose the Neumann boundary conditions to illustrate the performance of the proposed method in the case when one has to use approximations of both transmutation operators $T_f$ and $T_{1/f}$  and there is no zero coefficient in the expression \eqref{uprime}.

\begin{figure}[htb]
\centering
\includegraphics[
height=2.78in,
width=5.0in
]
{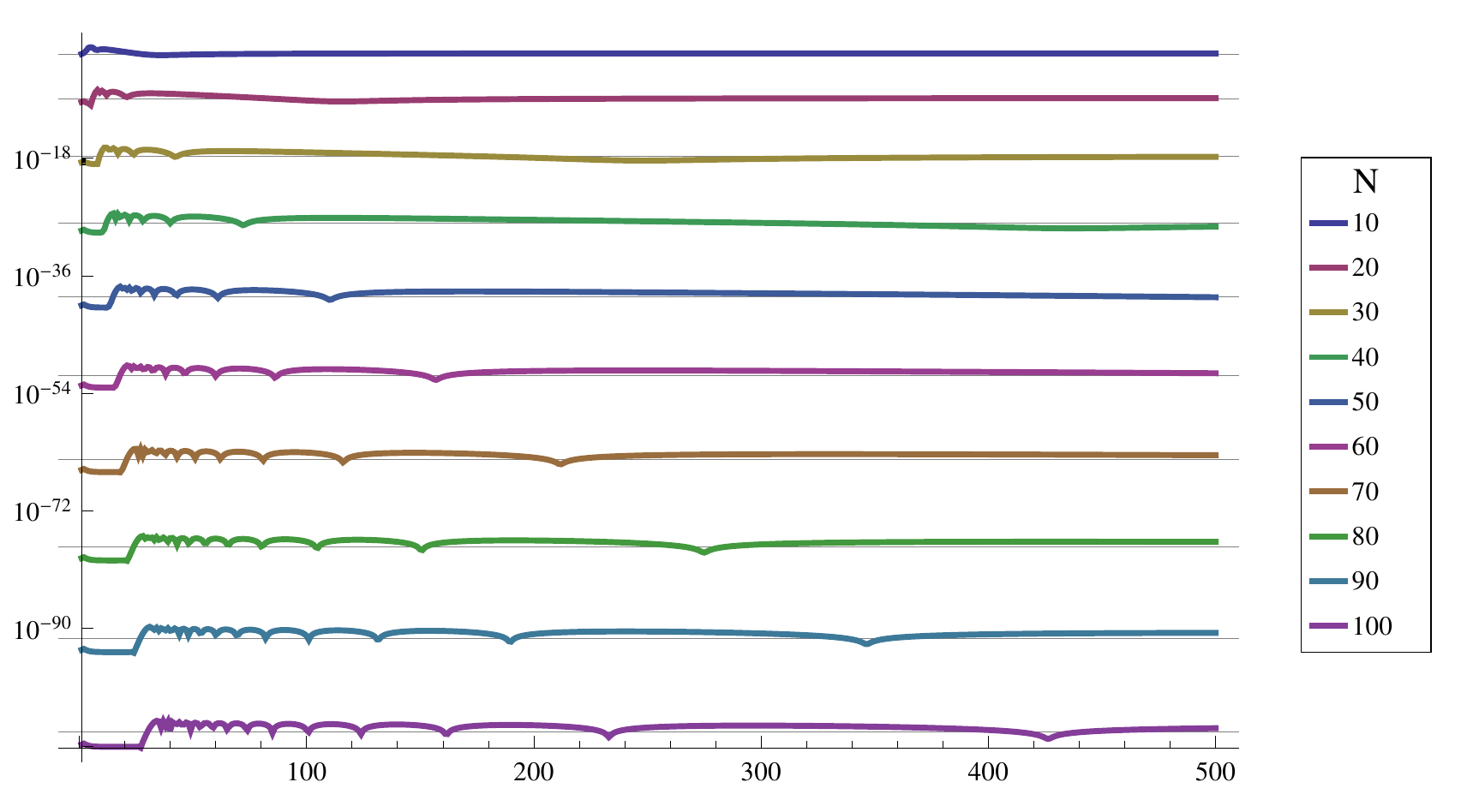}
\caption{The graphs of the absolute error of the first 500 eigenvalues of the problem \eqref{BoumenirEqn} (Example \ref{ExBoumenir}) obtained by the proposed algorithm with $N=10,20,\ldots,100$ (thick lines). Each thin line shows corresponding maximal approximation error in \eqref{KxxErr} and \eqref{KxmxErr}.}
\label{ExBoumenirError}
\end{figure}

The eigenvalues of the problem \eqref{BoumenirEqn} are given by $\lambda_n=n^2+3+4i$, $n=0,1,2\ldots$. As a particular solution we took $f(x)=e^{(2+i)x}$, then $h=f'(0)=2+i$, and the solution of the equation in \eqref{BoumenirEqn} satisfying the left boundary condition can be taken in the form $u(x,\omega):=c(\omega,x;h)-h s(\omega,x;\infty)$, see \eqref{ICcos} and \eqref{ICsin}. Equation $u'(\pi,\omega)=0$ gives us the characteristic equation of the problem \eqref{BoumenirEqn}.

For the numerical experiment we computed the functions $\mathbf{c}_n$ and $\mathbf{s}_n$ for $n\le 100$ using $M=256$ and 200-digit arithmetic for the calculation of the iterative integrals. After that we found the coefficients $a_0,a_1,\ldots,a_N$ and $b_1,\ldots,b_N$  for \eqref{KxxErr} and \eqref{KxmxErr} and approximated the integral kernel $K_{1/f}$ by \eqref{K1/f}. On Figure \ref{ExBoumenirError} we present the absolute error of the first 500 eigenvalues obtained by the proposed algorithm with different values of $N$. Note that the absolute errors remain at the same level and are in an excellent agreement with the approximation errors.
\end{example}

\begin{example}\label{ExChanane}
Consider the following problem with a complex potential and a spectral parameter dependent boundary condition \cite{Chanane2007}
\begin{equation}
\label{ChananeEqn}
\begin{cases}
-u''+e^{2ix} u=\mu^2 u, & 0\le x\le 1,\\
u(0)+\mu u(1)=0, & u'(0)=0.
\end{cases}
\end{equation}
For this example the characteristic equation is given by the equality
\[
\pi  \mu  \left(J_{-\mu }\left(e^i\right) (\mu  J_{\mu
   }(1)-J_{\mu -1}(1))+(J_{-\mu -1}(1)+\mu  J_{-\mu
   }(1)) J_{\mu }\left(e^i\right)\right)=2 \sin (\pi
   \mu ),
\]
where $J$ is the Bessel function of the first kind. As a particular solution we took  $f(x)=\frac{Y_0(e^{ix})}{Y_0(1)}$ with $f'(x)=-\frac{ie^{ix}Y_1(e^{ix})}{Y_0(1)}$ and hence $h:=f'(0)=-\frac{iY_1(1)}{Y_0(1)}$, where $Y$ is the Bessel function of the second kind.

We checked that the proposed algorithm was able to produce accurate results even using small values of the parameters $N$ and $M$. We used $N=20$, $M=96$ and 24-digit arithmetic. The approximation errors in \eqref{KxxErr} and \eqref{KxmxErr} were less than $2\cdot 10^{-18}$. The computation time was 1.6 seconds for the approximation part (Steps 1--5 of the algorithm from Subsection \ref{SubsectAlgorithm}) and 15 seconds was required to finish Step 6.
In Table \ref{ExChananeTable} we present the exact eigenvalues of the problem \eqref{ChananeEqn} together with the absolute errors obtained by our method and those reported in \cite{Chanane2007}.

\begin{table}[htb]
\centering
\small
\begin{tabular}
{cr@{+}lcc}\hline
$n$ & \multicolumn{2}{c}{$\mu^2_{n}$ (exact)} & \begin{tabular}{c}
Abs. error,\\
our method
\end{tabular} &
\begin{tabular}{c}
Abs. error,\\
method from \cite{Chanane2007}
\end{tabular}
\\\hline
1 & 4.9685430929323576232 & 0.3906545895360696300$i$ & $7.45\cdot 10^{-19}$ & $5.549\cdot 10^{-15}$ \\
2 & 20.602710348893372907 & 0.750232523531540313$i$ & $4.15\cdot 10^{-19}$ & $3.393\cdot10^{-14}$ \\
3 & 64.140382448045471607 & 0.684228375311332294$i$ & $3.84\cdot 10^{-19}$ & $3.977\cdot10^{-13}$\\
4 & 119.34792168887388950 & 0.714972404794013828$i$ & $3.88\cdot 10^{-19}$ & $8.004\cdot10^{-13}$\\
5 & 202.31443747778733950 & 0.70057212586524954$i$ & $2.43\cdot 10^{-19}$ & $2.064\cdot10^{-13}$\\
7 & 419.44558800598640866 & 0.70446189520144488$i$ & $2.39\cdot 10^{-17}$ & $4.582\cdot10^{-12}$\\
10 & 889.18520034251622114 & 0.70898948206981412$i$ & $1.05\cdot 10^{-17}$ & $2.734\cdot10^{-11}$\\
15 & 2077.5390063282081426 & 0.7073452595732362$i$ & $9.8\cdot 10^{-19}$ & $8.757\cdot10^{-10}$\\
20 & 3751.3714273572505215 & 0.7082902747558125$i$ & $6.69\cdot 10^{-18}$ & $2.410\cdot10^{-9}$\\
25 & 5926.6847018611521726 & 0.7078182266190013$i$ & $5.9\cdot 10^{-18}$ & $0.0003165$\\
50 & 24181.452786752952659 & 0.708107048845456$i$ & $3.6\cdot 10^{-18}$ &\\
75 & 54781.226477096285949 & 0.708045812501978$i$ & $4.5\cdot10^{-18}$ & \\
100 & 97710.005609281581148 & 0.708081740843471$i$ & $5.0\cdot 10^{-18}$ &\\
\hline
\end{tabular}
\caption{The eigenvalues of the problem \eqref{ChananeEqn} (Example \ref{ExChanane}) and the absolute eigenvalue errors obtained by our method and by method from \cite{Chanane2007}.}
\label{ExChananeTable}
\end{table}
\end{example}

\subsection{Quantum wells}
In this subsection we use notations and recall some results from \cite{CKOR}.

Consider the eigenvalue problem for the one dimensional Schr\"{o}dinger operator
\begin{equation}\label{eqQuantWell}
    Hu:=-u''+Q(x)u=\lambda u,\qquad x\in\mathbb{R},
\end{equation}
where
\begin{equation*}
    Q(x)=\begin{cases}
    \alpha_1, & x<0\\
    q(x), & 0\le x\le\ell,\\
    \alpha_2,& x>\ell
    \end{cases}
\end{equation*}
$\alpha_1$ and $\alpha_2$ are complex constants and $q$ is a continuous function on the segment $[0,\ell]$. The spectral problem consists in finding values of the spectral parameter $\lambda\in\mathbb{C}$ for which equation \eqref{eqQuantWell} possesses a nontrivial solution $u$ belonging to the Sobolev space $H^2(\mathbb{R})$.

In the selfadjoint case, i.e., when $Q$ is a real-valued function, the operator $H$ has a continuous spectrum $\left[\min\{\alpha_1,\alpha_2\},+\infty\right)$ and a discrete spectrum located on the set
\begin{equation}\label{DiscrSpectrQW}
    \Bigl[\min_{x\in[0,\ell]} q(x),\min\{\alpha_1,\alpha_2\}\Bigr).
\end{equation}

It was shown in \cite{CKOR} that finding the eigenvalues of the operator $H$ is equivalent to the Sturm-Liouville spectral problem for equation \eqref{eqQuantWell} on the segment $[0,\ell]$ with the boundary conditions
\begin{align}
u'(0)-\mu u(0)&=0,   \label{bcQWleft}\\
u'(\ell)+\nu u(\ell) &=0, \label{bcQWright}
\end{align}
where $\mu=+\sqrt{\alpha_1-\lambda}$, $\nu=\sqrt{\alpha_2-\lambda}$ and the eigenvalues are sought on the interval \eqref{DiscrSpectrQW}.

We consider a particular case $\alpha_1=\alpha_2=0$. Let $f$ be a particular solution of \eqref{eqQuantWell} for $\lambda=0$, non-vanishing on $[0,\ell]$ and satisfying $f(0)=1$. Define $h:=f'(0)$. Introducing a new parameter $\lambda=-\beta^2$ we obtain that $\mu=\nu=\beta$ in \eqref{bcQWleft} and \eqref{bcQWright},
\begin{equation*}
    \beta\in    \Bigl[0,\sqrt{\max_{x\in[0,\ell]} (-q(x))}\Bigr),
\end{equation*}
and the spectral parameter $\omega=i\beta$. It follows from \eqref{ICcos}, \eqref{ICsin} that the function $u(x)=c(\omega, x;h)-(i\omega+h)s(\omega, x;\infty)$ satisfies \eqref{bcQWleft}, and from \eqref{bcQWright} we obtain the characteristic equation
\begin{equation*}
    c'(\omega,\ell;h)-(i\omega+h)s'(\omega,\ell;\infty)-i\omega c(\omega,\ell;h)+i\omega(i\omega+h)s(\omega,\ell;\infty)=0.
\end{equation*}

\begin{example}\label{ExSquareWell}
Consider the square-well potential
\begin{equation*}
    Q(x)=\begin{cases}
    -U, & |x|\le a\\
    0, & \text{elsewhere}.
    \end{cases}
\end{equation*}
The eigenvalue $\lambda_n=-\beta_n^2$ is a solution of equation
\[
\arctan \frac{\sqrt{U-\beta_n^2}}{\beta_n}+a\sqrt{U-\beta_n^2}=\frac{n\pi}2,
\]
and the number of eigenvalues for each value of $U$ is equal to the smallest integer greater or equal to $2a\sqrt{U}/\pi$.

We have chosen the numerical values $U=15$, $a=1$, shifted the problem to the segment $[0,2]$ and taken $f(x)=e^{i\sqrt U x}$ as a particular solution. We used $N=32$, $M=96$ and 32-digits arithmetic. The errors in \eqref{KxxErr} and \eqref{KxmxErr} were of the magnitude $5.5\cdot 10^{-19}$. The exact eigenvalues of the problem and the absolute errors of the approximated eigenvalues are listed in Table \ref{ExSquareWellTable}.
\begin{table}[htb]
\centering
\small
\begin{tabular}
{cccccc}\hline
$n$ & $\beta_{n}$ (our method) & Absolute error & $\beta_{n}$ (\cite{CKOR}) & Absolute error
\\\hline
1 & 1.54436716376282718435 & $2\cdot 10^{-20}$ & 1.544367170 & $6\cdot 10^{-9}$\\
2 & 2.99547074607315853471 & $1.2\cdot 10^{-19}$ & 2.995470748 & $2\cdot 10^{-9}$\\
3 & 3.66781322275488144840 & $9\cdot 10^{-20}$ & 3.667813223 & $<3\cdot 10^{-10}$\\
\hline
\end{tabular}
\caption{Approximations of $\beta_n=\sqrt{-\lambda_n}$ of the square-well potential (Example \ref{ExSquareWell}).}
\label{ExSquareWellTable}
\end{table}
\end{example}

\begin{example}\label{ExSechSquare}
Consider the sech-squared potential defined by the expression $Q(x)=-m(m+1)\operatorname{sech}^2x$, $x\in(-\infty,\infty)$, $m\in\mathbb{N}$. An attractive feature of the potential $Q$ is that its eigenvalues can be calculated explicitly. The eigenvalue $\lambda_n$ is given by the formula $\lambda_n=-(m-n)^2$, $n=0,1,\ldots,m-1$.

The potential $Q$ is not of a finite support, nevertheless its absolute value decreases rapidly when $x\to\pm\infty$. We approximate the original problem by a problem with a finitely supported potential $\widehat Q$ defined by the equality
\begin{equation*}
    \widehat Q(x)=\begin{cases}
    -m(m+1)\operatorname{sech}^2x, & |x|\le a\\
    0, & \text{elsewhere},
    \end{cases}
\end{equation*}
where $a$ is chosen in such way that $Q(a)$ is sufficiently small.

For the numerical experiment we took $a=10$. Again in this example the recently discovered formulas from \cite{KrTNewSPPS} produced much more accurate results than formulas \eqref{Xn} and \eqref{Xtiln}. Using $N=70$, $M=2096$ and 128-digit arithmetic for the case $m=3$ we obtained the results presented in Table \ref{ExSechSqWellTable}.
\begin{table}[htb]
\centering
\small
\begin{tabular}
{cccc}\hline
$n$ & Exact values & $\lambda_{n}$ (our method) & $\lambda_{n}$ (\cite{CKOR})
\\\hline
0 & $-9$ & $-8.99999999999999999980$ & $-8.999628656$\\
1 & $-4$ & $-4.00000000000000000020$ & $-3.999998053$\\
2 & $-1$ & $-0.99999999999999877643$ & $-0.999927816$\\
\hline
\end{tabular}
\caption{Approximations of $\lambda_n$ of the potential $-12\operatorname{sech^2}x$ (Example \ref{ExSechSquare}).}
\label{ExSechSqWellTable}
\end{table}
\end{example}

\appendix
\section{Proof of Theorem \ref{Th Estimate for K1/f}}

The proof is based on several results from \cite{KT Transmut}, so we
preserve notations from \cite{KT Transmut}. Consider the bicomplex function $%
W:=\mathbf{K}_{f}-\mathbf{j}\mathbf{K}_{1/f}$. Here $\mathbf{j}$ is the hyperbolic
imaginary unit: $\mathbf{j}^{2}=1$. Using (\ref{K(x,t)}) and (\ref{K1/f}) we
observe that $W_{N}:=K_{f,N}-\mathbf{j}K_{1/f,N}=\sum_{n=0}^{N}Z^{(n)}(%
\alpha _{n},0;z)$ where $\alpha _{n}:=a_{n}+\mathbf{j}b_{n}$ and $Z^{(n)}$
are hyperbolic pseudoanalytic formal powers admitting the representation
\cite{KT Transmut}%
\begin{align*}
Z^{(0)}(\alpha _{0},0;z)& =a_{0}u_{0}(x,t)+\mathbf{j}b_{0}v_{0}(x,t)=\frac{h%
}{2}\left( f(x)+\frac{\mathbf{j}}{f(x)}\right) , & &  \\ 
Z^{(n)}(\alpha _{n},0;z)& =a_{n}u_{2n-1}(x,t)+b_{n}u_{2n}(x,t)+\mathbf{j}%
\bigl(a_{n}v_{2n}(x,t)+b_{n}v_{2n-1}(x,t)\bigr), & & n\geq 1.
\end{align*}
Here \ the hyperbolic variable $z$ has the form $z=x+\mathbf{j}t$ with the
corresponding conjugate $\overline{z}=Cz:=x-\mathbf{j}t$.

Denote $\widetilde{\mathbf{K}}_{f}=T_{f}^{-1}\left[ \mathbf{K}_{f}\right] $
and $\widetilde{K}_{f,N}=T_{f}^{-1}\left[ K_{f,N}\right] $. For the
corresponding Goursat data we introduce the notations%
\begin{equation*}
\begin{pmatrix}
\varphi (x) \\
\psi (x)%
\end{pmatrix}%
:=%
\begin{pmatrix}
\widetilde{\mathbf{K}}_{f}(x,x) \\
\widetilde{\mathbf{K}}_{f}(x,-x)%
\end{pmatrix}%
\quad \text{and}\quad
\begin{pmatrix}
\varphi _{N}(x) \\
\psi _{N}(x)%
\end{pmatrix}%
:=%
\begin{pmatrix}
\widetilde{\mathbf{K}}_{f,N}(x,x) \\
\widetilde{\mathbf{K}}_{f,N}(x,-x)%
\end{pmatrix}%
.
\end{equation*}%
Then $\left\vert \varphi -\varphi _{N}\right\vert \leq \varepsilon
\Vert T_{f}^{-1}\Vert $ and $\left\vert \psi -\psi
_{N}\right\vert \leq \varepsilon \Vert T_{f}^{-1}\Vert $. For an
estimate of the uniform norm $\Vert T_{f}^{-1}\Vert $ we refer to
\cite{KKTT}. It depends on $f$ and $b$ only.

Let us consider the functions $\Phi (x):=2\varphi (x)$, $\Psi (x)=2\psi
(x)-h $, $\Phi _{N}(x):=2\varphi _{N}(x)$, $\Psi _{N}(x)=2\psi _{N}(x)-h$.
We have that $\widetilde{W}:=\widetilde{\mathbf{K}}_{f}-\mathbf{j}\widetilde{%
\mathbf{K}}_{1/f}$ and $\widetilde{W}_{N}:=\widetilde{K}_{f,N}-\mathbf{j}%
\widetilde{K}_{1/f,N}$ are the unique solutions of the following
corresponding Goursat problem \cite{KT Transmut}%
\begin{equation*}
\partial _{\bar{z}}\widetilde{W}=0,
\end{equation*}%
\begin{equation*}
\widetilde{W}(x,x)=P^{+}\Phi \left( x\right) +P^{-}\Psi \left( 0\right)
\quad \text{and\quad }\widetilde{W}(x,-x)=P^{+}\Phi \left( 0\right)
+P^{-}\Psi \left( x\right)
\end{equation*}%
and
\begin{equation*}
\partial _{\bar{z}}\widetilde{W}_{N}=0,
\end{equation*}%
\begin{equation*}
\widetilde{W}_{N}(x,x)=P^{+}\Phi _{N}\left( x\right) +P^{-}\Psi _{N}\left(
0\right) \quad \text{and\quad }\widetilde{W}_{N}(x,-x)=P^{+}\Phi _{N}\left(
0\right) +P^{-}\Psi _{N}\left( x\right)
\end{equation*}%
where the idempotents $P^{+}$ and $P^{-}$ are defined by $P^{\pm }=\frac{1}{2%
}(1\pm \mathbf{j})$. Moreover, $\widetilde{W}=\mathbf{V}_{1}^{-1}[
\mathbf{K}_{f}-\mathbf{jK}_{1/f}] $ where the operators $\mathbf{V}%
_{1} $ and $\mathbf{V}_{1}^{-1}$ were introduced in \cite{KT Transmut} as $%
\mathbf{V}_{1}^{\pm 1}=T_{f}^{\pm 1}\mathcal{R}+\mathbf{j}T_{1/f}^{\pm 1}%
\mathcal{I}$ with $\mathcal{R}$ and $\mathcal{I}$ being projection operators
projecting a bicomplex valued function onto the respective scalar
components, $\mathcal{R=}\frac{1}{2}(I+C)$ and $\mathcal{I}=\frac{1}{2%
\mathbf{j}}(I-C)$ where $I$ is the identity operator. Thus, $\widetilde{W}%
=T_{f}^{-1}\left[ \mathbf{K}_{f}\right] -\mathbf{jT}_{1/f}^{-1}\left[
\mathbf{K}_{1/f}\right] $ meanwhile for $\widetilde{W}_{N}$ we have $%
\widetilde{W}_{N}=\mathbf{V}_{1}^{-1}\left[ \sum_{n=0}^{N}Z^{(n)}(\alpha
_{n},0;z)\right] =\sum_{n=0}^{N}\alpha _{n}z^{n}$ (see \cite[Sect. 5]{KT
Transmut}). Now we consider
\begin{equation}
\Vert W-W_{N}\Vert =\Vert \mathbf{V}_{1}\widetilde{W}-%
\mathbf{V}_{1}\widetilde{W}_{N}\Vert \leq \Vert \mathbf{V}%
_{1}\Vert \Vert \widetilde{W}-\widetilde{W}_{N}\Vert
\label{estimate1}
\end{equation}
where $\Vert \widetilde{W}-\widetilde{W}_{N}\Vert =\max_{%
\overline{S}}\vert \widetilde{W}-\widetilde{W}_{N}\vert _{\mathbb{%
B}}$ and the norm $\left\vert w\right\vert _{\mathbb{B}}$ of a bicomplex
number $w$ is defined as in \cite[Sect. 5]{KT Transmut}: $\left\vert
w\right\vert _{\mathbb{B}}=\frac{1}{2}\left( \left\vert w^{+}\right\vert
+\left\vert w^{-}\right\vert \right) $ and $w^{+}$, $w^{-}\in \mathbb{C}$
are such that $w=P^{+}w^{+}+P^{-}w^{-}$. In \cite[Sect. 5]{KT Transmut} it
was shown that $\mathbf{V}_{1}$ and $\mathbf{V}_{1}^{-1}$ are bounded in the
space of continuous bicomplex valued functions on $\overline{S}$ with the
norm $\left\Vert \cdot \right\Vert $ and hence $\left\Vert \mathbf{V}%
_{1}\right\Vert $ is a finite number depending on $f$ and $b$ only.

In a full analogy with the proof of Proposition 2.10 from \cite[Sect. 5]{KT
Transmut} we obtain $\Vert \widetilde{W}-\widetilde{W}_{N}\Vert
<\varepsilon \Vert T_{f}^{-1}\Vert $. Then from (\ref{estimate1})
we have
\begin{equation*}
\Vert \mathbf{K}_{f}-\mathbf{jK}_{1/f}-K_{f,N}+\mathbf{j}%
K_{1/f,N}\Vert <\varepsilon \Vert T_{f}^{-1}\Vert
\Vert \mathbf{V}_{1}\Vert .
\end{equation*}%
Finally, since $\left\vert \mathcal{I}w\right\vert \leq \left\vert
w\right\vert _{\mathbb{B}}$ (see \cite[Proposition 2]{CKr2012}), we obtain
\[
\vert \mathbf{K}_{1/f}-K_{1/f,N}\vert \leq \Vert \mathbf{K}%
_{f}-\mathbf{jK}_{1/f}-K_{f,N}+\mathbf{j}K_{1/f,N}\Vert <\varepsilon
\Vert T_{f}^{-1}\Vert \Vert \mathbf{V}_{1}\Vert
\]
which finishes the proof.

\section*{Acknowledgements}
We thank our colleague R. Michael Porter for his valuable help with Wolfram Mathematica.

\end{document}